\documentclass[a4paper,reqno,11pt]{amsart}
\usepackage[T1]{fontenc}
\usepackage[utf8]{inputenc}
\usepackage{graphicx, color}
\usepackage[dvipsnames,svgnames,table]{xcolor}
\usepackage[foot]{amsaddr}
\usepackage[normalem]{ulem}

\usepackage{amssymb, amsmath, amsthm}

\makeatletter
\def\thm@space@setup{
\thm@preskip=4mm
\thm@postskip=0mm
}
\makeatother

\usepackage{enumerate, enumitem}

\setenumerate{label=\textup{(\roman*)}, noitemsep, topsep=3pt-\parskip, 
labelindent=.2em, leftmargin=*, widest=iii,}
\setitemize{noitemsep, topsep=-\parskip, labelindent=.2em, leftmargin=*, widest=iii,}

\usepackage{hyperref}
\usepackage[capitalise, compress, noabbrev, nameinlink]{cleveref}
\definecolor{linkblue}{named}{MidnightBlue}
\hypersetup{colorlinks=true, linkcolor=linkblue,  anchorcolor=linkblue,
	citecolor=linkblue, filecolor=linkblue, menucolor=linkblue,
	urlcolor=linkblue} 
\usepackage{mathtools}
\usepackage{thmtools, thm-restate}
\usepackage[longnamesfirst,numbers,sort&compress]{natbib}

\theoremstyle{plain}
\newtheorem{thm}{Theorem}
\newtheorem*{thm*}{Theorem}
\newtheorem{theorem}[thm]{Theorem}

\newtheorem{lemma}[thm]{Lemma}
\newtheorem*{lemma*}{Lemma}

\newtheorem*{cor*}{Corollary}

\newtheorem*{lem*}{Lemma}
\newtheorem{conjecture}[thm]{Conjecture}
\newtheorem*{conjecture*}{Conjecture}

\theoremstyle{remark}
\newtheorem{claim}[thm]{Claim}

\newenvironment{proofclaim}[1][]
    {\let\oldqed\qedsymbol\renewcommand{\qedsymbol}{\ensuremath{\lozenge}}\begin{proof}[Proof of claim] }{\end{proof}\renewcommand{\qedsymbol}{\oldqed}}

\newcommand{\N}{\mathbb{N}}

\let\leq\leqslant
\let\geq\geqslant

\let\subset\subseteq

\let\epsilon\varepsilon

\newcommand{\NN}{\mathbb{N}}

\DeclareMathOperator\pw{pw}
\DeclareMathOperator\tw{tw}

\definecolor{brightmaroon}{rgb}{0.76, 0.13, 0.28}
\newcommand{\defin}[1]{\emph{\textcolor{brightmaroon}{#1}}}

\newcommand{\C}{\mathcal{C}}

\title{Erd\H{o}s-P\'osa property of rooted tree minors}

\begin{document}

\author[Claus]{Quentin Claus}
\address[Q.~Claus]{D\'epartement de Mathématiques, Universit\'e libre de Bruxelles, Belgium}
\email{quentin.claus@ulb.be}

\author[Joret]{Gwena\"el Joret}
\address[G.~Joret]{D\'epartement d'Informatique, Universit\'e libre de Bruxelles, Belgium}
\email{gwenael.joret@ulb.be}

\author[Rambaud]{Clément Rambaud}
\address[C.~Rambaud]{Universit\'e Côte d'Azur, CNRS, Inria, I3S, Sophia Antipolis, France}
\email{clement.rambaud@normalesup.org}

\author[Robinson]{Eileen Robinson}
\address[E.~Robinson]{D\'epartement de Mathématiques, Universit\'e libre de Bruxelles, Belgium}
\email{eileen.robinson@ulb.be}

\thanks{Q.\ Claus and G.\ Joret are supported by the Belgian National Fund for Scientific Research (FNRS)}

\begin{abstract}
Fiorini, Joret, and Wood (2013) showed that tree minors satisfy the so-called Erd\H{o}s-P\'osa property with a linear bound: For every tree $T$ there exists a constant $c \geq 1$ such that, for every graph $G$ and integer $k\geq 0$, either $G$ contains $k$ vertex-disjoint subgraphs each containing a $T$-minor, or $G$ has a set $X$ of at most $c k$ vertices such that $G-X$ has no $T$-minor.  
In this paper, we prove that the same result remains true if, given a subset $S$ of vertices of $G$, one only considers $T$-minors of $G$ that are rooted in $S$. Here, a $T$-minor is rooted in $S$ if there is a minor-model of $T$ where each branch set contains a vertex from $S$.  

This result can be seen as a generalization of the classical $S$-Path Theorem of Gallai, which corresponds to the case $T=K_2$.  
The upper bound on the size of $X$ is best possible up to the value of the constant $c$, and improves on an earlier $O(k^2)$ bound due to Hodor, La, Micek, and Rambaud (2026).  
\end{abstract}

\maketitle

\section{Introduction}

The well-known theorem of \citet{EP1965} states that, for every graph $G$ and positive integer $k$, either $G$ has $k$ vertex-disjoint cycles, or $G$ has a set $X$ of $O(k \log k)$ vertices meetings all cycles of $G$. 
This was generalized by \citet{RS1986} as follows: For every fixed planar graph $H$ there is a function $f(k)$ such that, for every graph $G$ and positive integer $k$, either $G$ has $k$ vertex-disjoint subgraphs each containing $H$ as a minor, 
or $G$ has a set $X$ of at most $f(k)$ vertices such that $G-X$ does not $H$ as a minor. 
(The setup of \citet{EP1965} corresponds to the case $H=K_3$.) 

In the above result of Robertson and Seymour, there is a $\Omega(k \log k)$ lower bound on the function $f(k)$ if $H$ contains a cycle, and it has been shown~\cite{CvBHJR2019} that this is the right order of magnitude for $f(k)$ in this case. 
If $H$ is a forest, no such lower bound holds, and \citet{FJW2013} showed that the result holds with $f(k)\in O(k)$ in that case. 
Recently, \citet{DJMM25} obtained a tight bound when $H$ is a tree on $t$ vertices, namely, $f(k)=t(k-1)$. This is best possible, as shown by a complete graph on $tk-1$ vertices. 

In this paper, we revisit this topic from the point of view of {\em rooted minors}. 
Given a graph $G$ and a subset $S\subseteq V(G)$ of vertices, we say that $G$ contains an \defin{$S$-rooted} model of a graph $H$ if there is a model of $H$ in $G$ where each branch set contains a vertex of $S$.
Here, by a \defin{model} of a graph $H$ in a graph $G$ 
we mean a family $(B_x \mid x \in V(H))$ of pairwise disjoint nonempty subsets of $V(G)$ called \defin{branch sets},
each inducing a connected subgraph of $G$,
such that for every $xy \in E(H)$, 
there is an edge in $G$ with one endpoint in $B_x$ and one endpoint in $B_y$.

\citet{hodor2024quickly} showed that, for every fixed tree $T$, the set of $S$-rooted models of $T$ satisfy the Erd\H{o}s-P\'osa property in the following sense: 
For every graph $G$, set $S\subseteq V(G)$, and positive integer $k$, 
either $G$ contains $k$ vertex-disjoint subgraphs each containing an $S$-rooted model of $T$, or 
$G$ has a set $X$ of $O(k^2)$ vertices such that $G-X$ has no $S$-rooted model of $T$. 

In this paper, we show that the $O(k^2)$ bound on the size of $X$ can be reduced to a $O(k)$ bound. 

\begin{theorem}[Main result]
    \label{thm:main}
    There exists a function $g\colon \N \to \N$ such that, 
    for every $t$-vertex tree $T$, every graph $G$, every set $S\subseteq V(G)$, and every positive integer $k$, 
    either $G$ contains $k$ vertex-disjoint $S$-rooted model of $T$, or 
    $G$ has a set $X$ of at most $g(t)\cdot k$ vertices such that $G-X$ has no $S$-rooted model of $T$. 
\end{theorem}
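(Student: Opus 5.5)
We argue by induction on $k$, following the strategy of \citet{FJW2013} for unrooted tree minors but carrying the roots along throughout. The case $k=1$ is trivial with $X=\emptyset$. For the inductive step we aim for the following dichotomy. Either $G$ contains a \emph{small} subgraph $H$, meaning $|V(H)|\le c(t)$, that contains an $S$-rooted model of $T$ and is attached to the rest of $G$ through a separator of bounded size. Or $G$ has a structure from which $k$ disjoint rooted models can be extracted directly. In the first case, let $Y$ be the separator, of size at most $c'(t)$, between $H$ and the rest. We apply induction to $G-(V(H)\cup Y)$ with $k-1$. This gives either $k-1$ disjoint rooted models, which together with the one inside $H$ yield $k$, or a hitting set $X'$ of size at most $g(t)(k-1)$. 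We then take $X=X'\cup V(H)\cup Y$. The point to check is that every $S$-rooted $T$-model in $G-X$ must avoid $H\cup Y$. This holds because $Y$ separates $V(H)$ from everything else. Hence $g(t)\ge c(t)+c'(t)$ suffices.

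The core is therefore a structural lemma. If $G$ has no $k$ disjoint $S$-rooted $T$-models, we want to find a bounded-size set whose removal, together with a bounded piece of $G$, destroys at least one model, with the removed part only a constant. We would first reduce to the case where $G$ is connected and every vertex lies in some rooted model, since other components can be handled separately. Next we contract edges not incident to $S$ wherever possible. More precisely, we take a minimal minor $G'$ of $G$ that preserves the rooted structure, where contracting an edge merges root-status and vertices of $V(G)\setminus S$ are deleted or suppressed. The goal is a graph in which almost all vertices are roots, because then the usual counting of \citet{FJW2013} applies. Models of trees in graphs of large minimum degree, or with many leaves or long paths through $S$, can be found greedily. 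Concretely, a large set of roots that is well connected (no small separator splits it into two large rooted pieces) yields $k$ disjoint rooted models. For this we use a rooted analogue of the fact that a connected graph on at least $tk$ vertices in $S$ with bounded-degree-free structure contains a rooted subdivided star or a long rooted path. A rooted star $K_{1,t}$-model with $t$ roots, or an $S$-path passing through $t$ roots, contains a rooted model of every $t$-vertex tree. The second fact is true for any tree, by mapping $T$ along a DFS-like contraction. For stars we need branching.

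So the plan for the structural lemma is to apply Gallai/Menger-type arguments on $S$. Either there exist $k$ disjoint connected subgraphs each containing $t$ roots and being "rich" (a rooted long path or a rooted subdivided $t$-star), which give $k$ rooted models, or there is a separator $Y$ of size $O_t(1)$ cutting off a component containing few roots, at most $t^{O(1)}$, yet still hosting a rooted model. The minimal such piece is then shrunk to bounded total size by taking a minimal rooted model inside it. Its branch sets are trees with all leaves in $S$, since otherwise they can be pruned. We still need to bound the number of nonroot degree-$\ge3$ vertices and path lengths. We handle long nonroot paths by replacing them with a single edge, or by arguing that such a path needs no deletion beyond its endpoints as separator.

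We expect the main obstacle to be this extraction: guaranteeing the alternative "$k$ disjoint rooted models" with only $O_t(k)$ rather than $O(k^2)$ loss. The naive approach is to cover roots by $k$ rooted pieces and delete a separator per piece, but we must ensure the separator for one piece does not grow with $k$. To achieve this, we would first try a tangle/rooted-linkedness argument. If there is no bounded separation splitting off a piece with a rooted model, then $S$ is "$(t)$-well-linked" in a weak sense. Then an Erdős–Pósa-free packing via a greedy choice of minimal subtrees connecting $t$ roots (à la Gallai's $S$-paths with $T=K_2$ extended to Steiner trees) gives $k$ disjoint ones. We would prove this by induction on $t$ by removing a leaf of $T$.
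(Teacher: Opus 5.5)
\textbf{Verdict: genuine gap.} Your plan has two genuine gaps, and both sit exactly where the difficulty lies.

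\emph{False claim about rich subgraphs.} Your packing alternative rests on the claim that a long rooted path, or a rooted subdivided $t$-star, contains an $S$-rooted model of every $t$-vertex tree (``by mapping $T$ along a DFS-like contraction''). This is false.
\begin{itemize}
\item Every minor of a path is a linear forest, so no path has a $K_{1,3}$-minor.
\item Every connected minor of a subdivided star has at most one vertex of degree at least $3$. So the six-vertex tree with two adjacent degree-$3$ vertices is a minor of neither.
\end{itemize}
Hence $k$ disjoint ``rich'' subgraphs of your kind do not yield $k$ rooted models of a general $T$. You need branching of depth about $t$, obtained in a way that packs linearly. This is precisely what the paper's family $\mathcal{C}_t(G,S)$ provides. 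Its members are built recursively by gluing three members of $\mathcal{C}_{t-1}(G,S)$ along a $K_{1,3}$ or $K_3$ pattern, and they contain rooted complete binary trees of height $t$ (\Cref{lemma:properties_of_Ct}).

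\emph{False structural dichotomy.} Your dichotomy in the style of \citet{FJW2013} reads: $k$ disjoint models, or else a bounded-size subgraph hosting a rooted model, or a bounded-order separation cutting off a piece with boundedly many roots that hosts a model. It fails already for $T=K_2$ and $k=2$.
\begin{itemize}
\item Construction: take $c\notin S$ and disjoint connected cubic graphs $E_1,\dots,E_n$ with $n\geq 3$. Let $E_i$ contain a root $s_i$ and a vertex $u_i$ at distance more than $N$ from $s_i$. Make $c$ adjacent to every $u_i$, and let $S=\{s_1,\dots,s_n\}$.
\item Every rooted $K_2$-model contains an $s_i$--$s_j$ path through $c$, so $\{c\}$ hits all models and there are no two disjoint ones.
\item Yet every model has more than $2N$ vertices, and no non-root vertex has degree at most $2$, so suppressing non-root paths does nothing.
\item Every separation $(A,B)$ with a model inside $G[A\setminus B]$ satisfies $|A\cap B|+|S\cap(A\setminus B)|\geq n$.
\end{itemize}

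So a genuine replacement rule would be indispensable, and ``contract edges not incident to $S$'' is not one. It can lower the packing number: if $a$ is adjacent to $s_1,s_3$ and $b$ to $s_2,s_4$, contracting $ab$ destroys one of the two disjoint $S$-paths $s_1as_3$ and $s_2bs_4$. Hitting sets also grow when lifted back through contractions. Your separator $Y$ is superfluous as well, since deleting $V(H)$ alone suffices. So everything hinges on bounded-size models existing, and they need not. The closing well-linkedness and greedy Steiner-tree sketch does not fill this gap. This is exactly the step the authors report they could not transfer from \citet{FJW2013}.

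\emph{What the paper does instead.} It inducts on $t$ rather than on $k$.
\begin{itemize}
\item It takes a maximum packing $\mathcal{P}$ of $\mathcal{C}_{t-1}(G,S)$ and contracts its members to roots.
\item It applies the linear Erd\H{o}s--P\'osa property of rooted $K_{1,3}$/$K_3$ (\Cref{lemma:excluding_K13_and_K3}) to arrange $\mathcal{P}$ in a path-like order.
\item It then uses Menger, the Helly property of intervals, and a bramble argument to hit $\mathcal{C}_t(G,S)$ with $O_t(k)$ vertices.
\item What remains has bounded rooted pathwidth, and \Cref{lem:erdos_posa_pw} finishes.
\end{itemize}
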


The upper bound on the size of $X$ in the above result is best possible as a function of $k$.  
On the other hand, the constant factor $g(t)$ appearing in our proof is a fast growing function of $t$ and is most likely not optimal. 

We remark that, when restricting to the unrooted setting (i.e.\ when $S=V(G)$), our proof is genuinely different from the two previous proofs~\cite{FJW2013, DJMM25} giving a $O(k)$ bound. 
The idea of the proof in~\cite{FJW2013} is to show that either $G$ contains a model of the desired tree $T$ that has {\em constant size} (in which case we remove all vertices in the model and induct on $k$), or that a reduction operation can be applied to $G$, producing a  smaller graph that is `equivalent' to $G$ (in which case we are done by induction on $|V(G)|$). 
The proof in~\cite{DJMM25} works by applying a lemma of~\citet{D95}, which gives a model of $T$ in $G$ together with a small-order separation that allows to cleanly cut off the model from the rest of the graph. 
We tried to extend each of these two approaches to the rooted setting without success, which is why we developed a new approach. 
Our approach works by induction on $t$. 
While it does not give a bound as good as in~\cite{DJMM25}, our proof is conceptually simple, and thus we believe it is of interest in the unrooted setting as well. 

Additionally, we generalize \Cref{thm:main} to the case of rooted models of a forest instead of a tree, and we also allow looking for models that are `partially rooted' instead of `fully rooted'.  
To state the result, we need to extend the notion of models
to pairs $(G,S)$ for $G$ a graph and $S \subseteq V(G)$.

For all graphs $G,H$ and all $S \subseteq V(G), R \subseteq V(H)$,
a \defin{model of $(H,R)$ in $(G,S)$} is a model $(B_x \mid x \in V(H))$ of $H$ in $G$ 
such that $B_x \cap S \neq \emptyset$ for every $x \in R$. 
Observe that the case $R=V(H)$ corresponds to an $S$-rooted model of $H$, while 
$R=\emptyset$ corresponds to a standard (unrooted) model of $H$; thus, this definition allows to interpolate between these two notions. 
Using techniques from \citet{DJMM25} on \Cref{thm:main},
we obtain the following generalization.

\begin{theorem}\label{thm:main_arbitrarily_rooted_forest}
    There is a function $g' \colon \NN \to \NN$ such that the following holds. 
    For every $t$-vertex forest $F$ and every $R \subseteq V(F)$,
    for every graph $G$ and every $S \subseteq V(G)$,
    for every positive integer $k$, 
    either
    there are $k$ vertex-disjoint models of $(F,R)$ in $(G,S)$, or
    there is a set $X \subseteq V(G)$ of size at most $g'(t)\cdot k$ such that
    there is no model of $(F,R)$ in $(G-X, S \setminus X)$. 
\end{theorem}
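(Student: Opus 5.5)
Assuming \Cref{thm:main}, I reduce \Cref{thm:main_arbitrarily_rooted_forest} in two steps: first from partially rooted trees to fully rooted trees, then from trees to forests.

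\emph{Step 1: partially rooted trees.} Let $T$ be a tree and $R \subseteq V(T)$. If $R=\emptyset$, apply \Cref{thm:main} with $S=V(G)$. Otherwise build a tree $T'$ from $T$ by attaching a long path to each vertex $x \notin R$; a bounded length, say $t$, should suffice. The aim is that an $S$-rooted model of a suitable tree in an auxiliary pair $(G',S')$ corresponds to a model of $(T,R)$ in $(G,S)$. A concrete way to do this, giving every vertex of $G$ a private root, is as follows. Let $G'$ be $G$ with a pendant leaf $v'$ attached to each vertex $v$. Put $S' = S \cup \{v' : v \in V(G)\}$. Then ask for an $S'$-rooted model of $T^+$, where $T^+$ attaches one new leaf $\ell_x$ to each $x \notin R$.
\begin{itemize}
\item A model of $(T,R)$ in $(G,S)$ extends to such a model: add $v'$ for some $v \in B_x$ to form $B_{\ell_x}$.
\item Conversely, given such a model, branch sets of $T$-vertices that use only pendant vertices can be rerouted to their neighbours. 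One then checks that deleting the pendant vertices yields a model of $(T,R)$, possibly after contracting.
\end{itemize}
A hitting set $X'$ in $G'$ maps to a hitting set in $G$ of at most the same size, by replacing each $v'$ with $v$. Disjoint models in $G'$ give disjoint models in $G$ after this cleanup. Since $|V(T^+)| \le 2t$, this gives the bound $g(2t)k$.

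\emph{Step 2: forests.} This follows the technique of \citet{DJMM25}. Let $F$ have components $T_1,\dots,T_m$, with roots $R_i = R\cap V(T_i)$. Form the tree $T^\ast$ by adding a new unrooted vertex $z$ adjacent to one vertex of each $T_i$. A model of $(T^\ast,R)$ contains a model of $(F,R)$. The converse fails: the components may lie in different components of $G$. So I argue by induction on the number of components of $F$ and on $k$, as follows.
\begin{itemize}
\item If $G$ has a component $C$ containing $k$ disjoint models of $(F,R)$, we are done.
\item For a fixed component $C$, let $k_C$ be the maximum number of disjoint models of $(F,R)$ inside $C$. Inside a connected graph, models of $F$ can be joined into models of $T^\ast$ by routing through $G$. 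This needs care, since the connecting paths must be disjoint. To handle it, I would instead apply the result to connected $G$ via the "$(T^\ast,R)$ or small separator" dichotomy, with the linear bound from Step~1 applied to $T^\ast$.
\item Over all components, the hitting sets sum to $O(\sum_C k_C)$. If $\sum_C k_C \ge k$ we get $k$ disjoint models; otherwise the total hitting set is $O(k)$. Models split across components must also be handled: hit for each $i$ the components hosting only subforests. This is where induction on proper subforests $F' \subsetneq F$ enters, combining hitting sets by a case analysis on which subforests appear $\ge k$ times.
\end{itemize}

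The main obstacle is this last combination step. A model of $F$ may spread over several components of $G$, and the same component may host many copies of one component $T_i$ but few of another. I expect to handle it by counting, for each subset $I \subseteq [m]$, the components of $G$ containing models of $\bigcup_{i\in I} T_i$. Either a greedy packing yields $k$ disjoint models of $F$, or deleting $O_t(k)$ vertices, given by the induction hypothesis on subforests with multiplicity, kills every $I$ that is needed. This gives $g'(t)$ depending on $g(2t)$ and $2^t$.
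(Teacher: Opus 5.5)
Your proposal has a genuine gap: the Step~1 reduction is wrong, and Step~2 does not contain a proof.

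\textbf{Step~1.} Every vertex $v$ of $G$ receives a private root $v'\in S'$. So a branch set $B_x$ with $x\in R$ can meet the rooting condition through some $v'$ with $v\notin S$. After you delete the pendant vertices, that branch set contains no vertex of $S$, and no rerouting or contraction can fix this. For a concrete failure, take $T=K_2$, $R=V(T)$ (so $T^+=T$), $G$ a single edge $uv$, and $S=\emptyset$. There is no model of $(T,R)$ in $(G,S)$, yet $\{u,u'\},\{v,v'\}$ is an $S'$-rooted model of $T^+$ in $G'$. Taking $G$ large with $S=\emptyset$ shows that the packing outcome of \Cref{thm:main} in $(G',S')$ cannot be pulled back to $(G,S)$. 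In effect, your construction turns $(T,R)$ into an unrooted problem. More generally, branch sets are arbitrary connected sets. Any gadget of extra roots, whether pendant leaves or long pendant paths, can be absorbed by branch sets of vertices in $R$ as easily as by those outside $R$. So partial rooting cannot be encoded as full rooting this way.

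\textbf{Step~2.} The combination over components of $G$ is explicitly left open. Moreover, even in a connected graph, a model of $F$ need not extend to a model of $T^\ast$. For example, $G=P_4$ contains $F=2K_2$ but not $T^\ast=P_5$. Hence hitting all $(T^\ast,R)$-models does not hit all $(F,R)$-models.

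\textbf{The missing idea.} You only need the easy direction of a correspondence, followed by an argument in bounded width.
\begin{enumerate}
\item Take a $t$-vertex tree $T$ containing $F$ as a spanning subgraph. Every $S$-rooted model of $T$ contains a model of $(F,R)$. So if there are no $k$ disjoint models of $(F,R)$, \Cref{thm:main} gives $X_0$ with $|X_0|\le g(t)k$ such that $G-X_0$ has no $S$-rooted model of $T$. Then $\pw(G-X_0,S)\le 2t-2$ by \Cref{diestelSrootedoriginal}.
\item Partial rooting and several components are handled directly in this bounded-width regime (\Cref{lemma:bounded_tw_implies_EP}). Induct on $\sum_i x_i$, where $x_i$ counts the copies of the component $T_i$ still to be packed.
\item At each stage, find a separation $(A,B)$ of order at most $|V(T_{i_0})|+w$ such that $G[A]$ contains a model of $(T_{i_0},R_{i_0})$, while $G[A\setminus B]$ contains a model of no $(T_i,R_i)$. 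Delete $A\cap B$ and recurse on $G[B\setminus A]$ with $x_{i_0}$ decreased by one.
\item To get this separation, take a deepest node $z$ of a tree decomposition of $(G,S)$ whose subtree graph $G_z$ contains a model of some $(T_i,R_i)$, and cut along the bag $W_z$. Components with $R_i\neq\emptyset$ must meet the bags, because the bags cover $S$. Components with $R_i=\emptyset$ may live entirely outside the bags; when such a component has a model in $G_z$, refine the cut using \Cref{lemma:Diestel_lemma_consequence}.
\end{enumerate}
This produces $X_1$ with $|X_1|\le 2t^2k$, and $X_0\cup X_1$ is the required hitting set.
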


A generalization of rooted minors was recently proposed by \citet{colorful_minors}
under the name of $q$-colorful minors:
Every vertex is given a set of colors from a pool of size $q$,
and the minor relation is modified to respect these colors. 
For $q=1$, this corresponds to the notion of rooted minors appearing in \Cref{thm:main_arbitrarily_rooted_forest}, where the sets $R \subseteq V(F)$ and 
$S \subseteq V(G)$ describe which vertices of $F$ and $G$, respectively, receive the unique color that is available.
\citet{colorful_minors} characterized the $q$-colored graphs whose models satisfy 
the Erd\H{o}s-P\'osa property, for every $q \in \NN$. 
In particular, the property fails to hold for some (very simple) $2$-colored trees, thus there is no hope of generalizing \Cref{thm:main_arbitrarily_rooted_forest} to this setting.

\subsubsection*{Organization of the paper}
The paper is organized as follows. 
We introduce the necessary definitions and preliminary lemmas in \cref{sec:prelim}. 
Then, we prove \cref{thm:main} in \cref{sec:proof}, and \cref{thm:main_arbitrarily_rooted_forest} in \cref{sec:forests}.  
In \cref{sec:paths_and_stars}, we revisit \cref{thm:main} and show better bounds on the size of the hitting set $X$ for some specific trees $T$, namely, paths and stars.  
Motivated by these improved bounds, we conclude in \cref{sec:open_problems} with a conjecture about the best possible bound in \cref{thm:main}. 

\section{Preliminaries}
\label{sec:prelim}
For $n\in \mathbb{N}$, let $[n]$ denote the set $\{1,2,\ldots, n\}$. 

All graphs in this paper are finite, simple, and undirected. 
Given a graph $G$, we denote by $V(G)$ its vertex set, and by $E(G)$ its edge set.

The \defin{length} of a path $P$ in a graph $G$ is the number of edges in $P$.  
Given two sets $A,B \subseteq V(G)$, a path $P$ in $G$ is called an \defin{$A$--$B$ path} if its vertices can be enumerated as $v_1,\ldots,v_k$ along $P$ with $V(P)\cap A = \{v_1\}$ and $V(P)\cap B = \{v_k\}$.  In case $A$ contains a single vertex $v$, we sometimes simply write `$v$--$B$ path' instead of `$\{v\}$--$B$ path', and same for $B$.

For a tree $T$, we denote by $L(T)$ the set of all the leaves of $T$, 
where a \defin{leaf} of $T$ is a vertex of $T$ with degree exactly $1$. 
A \defin{cubic tree} is a tree where every vertex has degree $1$ or $3$,
and a \defin{subcubic tree} is a tree where every vertex has degree at most $3$.
A \defin{rooted tree} is a tree where a vertex is specified to be the root.  The \defin{height} of a rooted tree is the maximum distance between the root and a vertex of the tree. 
For $h \geq 0$, the \defin{complete ternary tree} of height $h$ is the unique rooted cubic tree of height $h$ where every leaf is at distance $h$ from the root.   
If one removes a neighbor of the root together with all its subtree, the resulting rooted tree is called the \defin{complete binary tree} of height $h$. 

Observe that the complete ternary tree of height $h$ has $3 \cdot 2^h-2$ vertices and the complete binary tree of height $h$ has $2^{h+1}-1$ vertices.

For a graph $G$ and a set $U\subseteq V(G)$, we define the \defin{neighborhood} of $U$ in $G$ as the set of vertices in $V(G) \setminus U$ that are adjacent to at least one vertex in $U$, and we denote it by $N_G(U)$.  

For a graph $G$ and a set $U\subseteq V(G)$, we denote by $G-U$ the subgraph of $G$ obtained by removing all the vertices in $U$.  
If $U$ is a singleton $\{u\}$, we simply write $G-u$ instead of $G-\{u\}$.

A \defin{tree decomposition} of a graph $G$ is a pair $\mathcal D = \big(T,(W_x \mid x \in V(T))\big)$ where $T$ is a tree and $(W_x \mid x \in V(T))$ is a collection of subsets of $V(G)$ called the \defin{bags} of $\mathcal{D}$, and such that 
\begin{enumerate}
    \item for each vertex $v\in V(G)$, the set $\{x \in V(T) \mid v\in W_x\}$ induces a nonempty subtree of $T$, and 
    \item for each edge $vw\in E(G)$, there exists $x\in V(T)$ such that both $v$ and $w$ are in the bag $W_x$.
\end{enumerate}
The \defin{width} of the tree decomposition $\mathcal{D}$ is $\max\{|W_x| \mid x\in V(T)\}-1$. 
The \defin{treewidth} of $G$, denoted by $\tw(G)$, is the minimum width of a tree decomposition of $G$.  

A \defin{path decomposition} of a graph $G$ is a tree decomposition $\big(T,(W_x \mid x \in V(T))\big)$ of $G$ where $T$ is a path,
and the \defin{pathwidth} of a graph $G$, denoted by $\pw(G)$, 
is the minimum width of a path decomposition of $G$.
It will be convenient to denote path decompositions simply as a sequence $(W_1, W_2, \dots, W_m)$ of bags ordered according to the path $T$.  

Let $G$ be a graph. 
A \defin{separation} of $G$ is a pair $(A,B)$ of subsets of $V(G)$ such that
$A \cup B = V(G)$ and
there is no edge in $G$ with one endpoint in $A \setminus B$ and one endpoint in $B \setminus A$.
The \defin{order} of this separation is $|A \cap B|$. 

Let $G$ and $H$ be graphs and let $S \subseteq V(G)$.  
Recall that a model of $H$ in $G$ is said to be  
$\mathit{S}$-rooted
if the branch set of each vertex of $H$ contains at least one vertex of $S$.  
To keep notations light when considering subgraphs, it will be convenient to also allow $S$ to contain vertices not in $G$ in this definition, 
and so ``$S$-rooted models of $H$ in $G$''
refers to ``$(S \cap V(G))$-rooted models of $H$ in $G$''. 
Similarly, when considering models of $(H,R)$ in $(G,S)$ in the paper, where $R\subseteq V(H)$, we allow $S$ to contain vertices not in $G$. 

For all graphs $G,H$ and all $S \subseteq V(G), R \subseteq V(H)$,
a \defin{model of $(H,R)$ in $(G,S)$} is a model $(B_x \mid x \in V(H))$ of $H$ in $G$ 
such that $B_x \cap S \neq \emptyset$ for every $x \in R$. 

The following definitions where introduced in \cite{hodor2024quickly,JansenSwennnenhuis2024} (see also \cite{claus2026excludingapexforestfanquickly, BlowupStructureExcludingTreeOrApexTree} for more background).  
Given a graph $G$ and $S \subseteq V(G)$, a \defin{tree decomposition of} $\mathit{(G,S)}$ is a pair $\big(T,(W_x \mid x \in V(T))\big)$ such that
\begin{enumerate}
    \item $\big(T,(W_x \mid x \in V(T))\big)$ is a tree decomposition of $G[\bigcup_{x\in V(T)}W_x]$,
    \item $S \subseteq \bigcup_{x \in V(T)} W_x$, and
    \item for every connected component $C$ of $G-\bigcup_{x \in V(T)} W_x$, there exists $x \in V(T)$ such that $N_G(V(C)) \subseteq W_x$.
\end{enumerate}
Again, the width of this tree decomposition is $\max \{|W_x| \mid x \in V(T)\}-1$, and
the \defin{treewidth of} $\mathit{(G, S)}$, denoted by $\tw(G, S)$, is the minimum width of a tree decomposition of $(G, S)$. 
(As for rooted models, to keep notations light in the proofs it will be convenient to allow $S$ to contain vertices not in $G$ in these definitions, and thus we do so.) 
A \defin{path decomposition of} $\mathit{(G,S)}$ is a tree decomposition $\big(T,(W_x \mid x \in V(T))\big)$ of $(G,S)$
where $T$ is a path, and the 
\defin{pathwidth of} $\mathit{(G, S)}$, denoted by $\pw(G, S)$, is the minimum width of a path decomposition of $(G, S)$. 
Note that for every graph $G$ and every $S \subseteq V(G)$, $\tw(G,S) \leq \pw(G,S)$,
and $\tw(G,V(G)) = \tw(G)$, $\pw(G,V(G)) = \pw(G)$. 
\citet{hodor2024quickly} proved the following theorem, which we will use in our main proof.   
It can be seen as an analogue of the classical Excluded Tree Minor Theorem of \citet{RS83} (see also \citet{D95}).
  
 \begin{theorem}[Theorem 7, \citet{hodor2024quickly}]
    For every forest $F$ with at least one vertex, for every graph $G$ and for every $S\subseteq V(G)$, if $G$ has no $S$-rooted model of $F$, then $\pw(G, S)\leq 2|V(F)|-2$.
    \label{diestelSrootedoriginal}
\end{theorem}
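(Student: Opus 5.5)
We would follow Diestel's short proof of the Bienstock--Robertson--Seymour--Thomas theorem and adapt it to the rooted setting, paying a factor of $2$ in the width. First, we reduce to the case where $F$ is a tree $T$ on $t=|V(F)|$ vertices: adding edges between the components of $F$ gives a tree $T\supseteq F$ on the same vertex set, and every $S$-rooted model of $T$ is an $S$-rooted model of $F$. (If $t=1$ then $S=\emptyset$, and the empty path decomposition, or a single empty bag, works since every component $C$ of $G$ has $N_G(V(C))=\emptyset$.) So assume $G$ has no $S$-rooted model of $T$, where $t\ge 2$.

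\textbf{The invariant.} We consider triples consisting of:
\begin{itemize}
\item a separation $(A,B)$ of $G$;
\item a subtree $T'\subseteq T$ and an $S$-rooted model $(B_x\mid x\in V(T'))$ of $T'$ in $G[B]$;
\item a path decomposition $(W_1,\dots,W_m)$ of $(G[A],S\cap A)$ of width at most $2t-2$ with $W_m\supseteq A\cap B$.
\end{itemize}
We require that every vertex of $A\cap B$ lies in some branch set, and that each branch set $B_x$ meets $A\cap B$ in at most one vertex. We also require that $B_x\cap A\cap B\neq\emptyset$ only for \emph{active} vertices $x$, meaning those with a neighbour in $T$ outside $T'$. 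Hence $|A\cap B|\le |V(T')|\le t-1$; the slack up to $2t-1$ is reserved for the extension step. The trivial triple with $A=\emptyset$ starts the process. Among all valid triples we choose one maximising $|S\cap A|$, and then $|A|$.

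\textbf{Finishing, and the extension step.} If $S\subseteq A$, we restrict the decomposition appropriately. Each component of $G-\bigcup_i W_i$ either lies in $B\setminus A$, where its neighbourhood is contained in $A\cap B\subseteq W_m$, or is handled by the decomposition of $(G[A],S\cap A)$. This gives a path decomposition of $(G,S)$ of width at most $2t-2$.

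Otherwise, pick an active $x\in V(T')$ with a $T$-neighbour $y\notin V(T')$; let $v$ be the vertex of $B_x\cap A\cap B$. Consider the component $C$ of $G-(A\cap B)$ inside $B\setminus A$ that is relevant to $v$.
\begin{itemize}
\item If $C$ contains a vertex of $S$ adjacent via a path to $B_x$, we enlarge the model to $T'+y$ by letting $B_y$ be an $S$-vertex together with a path to $B_x$. We then push $A$ forward across $B_x$ and the new $B_y$, replacing $v$ in the separator by a single new vertex of $B_y$; if $x$ becomes inactive we drop it.
\item If no such path exists, the component carries no $S$-vertex reachable in the needed way. We then absorb it into $A$, appending bags of the form $(A\cap B)\cup\{\text{new vertex}\}$ along a path; this is still allowed by condition (iii) of the decomposition of $(G,S)$.
\end{itemize}
In each case the new bags consist of the old separator plus at most $|V(T')|+1$ vertices needed to move each active branch set's representative. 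This stays within $2t-1$ vertices and strictly increases $|S\cap A|$ or $|A|$, contradicting maximality. If $T'$ ever becomes $T$, we obtain an $S$-rooted model of $T$ in $G$, which is a contradiction.

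\textbf{Main obstacle.} The hard part is moving each active branch set's representative in $A\cap B$ forward inside $B_x$ while keeping the bags small. Following Diestel, we would route this via Menger: choose the separation so that there are $|A\cap B|$ disjoint paths into $B\setminus A$ inside the branch sets. We then advance the separator one vertex at a time, interpolating bags of the form $\text{(old separator)}\cup\text{(new separator)}$. Both separators have size at most $t-1$ plus the new root, which is where the bound $2t-2$ comes from. Getting this interpolation to respect the rooted condition (iii), so that discarded parts of $B\setminus A$ have neighbourhood inside a single bag, is the delicate point. We would handle it by always discarding whole components of $G[B\setminus A]-\bigcup_x B_x$, whose neighbourhoods lie in the current separator.
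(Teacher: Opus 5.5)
\textbf{There is a genuine gap: the step that advances the separation is not supplied.} The paper does not prove this statement; it quotes it as Theorem~7 of \citet{hodor2024quickly} and uses it as a black box, so your sketch has to stand on its own.

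Several parts are fine: the reduction to a spanning tree $T\supseteq F$, the case $t=1$, and the finishing step. (If $S\subseteq A$, a component of $G-\bigcup_i W_i$ lying in $B\setminus A$ has its neighbourhood in $A\cap B\subseteq W_m$.) The problems are as follows.
\begin{enumerate}
\item \emph{The invariant is not preserved by your moves, and conflicts with the finishing condition.} An inactive $x$ has $B_x\subseteq B$ and $B_x\cap A\cap B=\emptyset$, so $B_x\subseteq B\setminus A$. Since $B_x$ meets $S$, the condition $S\subseteq A$ can never hold while $T'$ has an inactive vertex. Meanwhile, ``pushing $A$ across $B_x$'' puts part of $B_x$ into $A\setminus B$, so the model is no longer in $G[B]$.
\item \emph{The separator bound is lost after one step.} If $A'\supseteq A\cup B_x\cup B_y$, then every vertex of $B_x\cup B_y$ with a neighbour in $B'\setminus A'$ must lie in $A'\cap B'$. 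Nothing makes this a single vertex of $B_y$, so $|A\cap B|\le |V(T')|$, which is the source of your $2t-2$, fails. The path forming $B_y$ must also avoid the other branch sets, which your invariant places in $B$.
\item \emph{Your dichotomy in the extension step is not exhaustive.} Condition (iii) only lets you leave out of the bags components containing \emph{no} vertex of $S$. A component of $G-A$ whose $S$-vertices are merely not reachable from $B_x$ in the required way therefore cannot be absorbed. Bags $(A\cap B)\cup\{p_1\},(A\cap B)\cup\{p_2\},\ldots$ along a path do not even cover the edges $p_ip_{i+1}$.
\item \emph{The neighbourhood claim in your last paragraph is false.} Components of $G[B\setminus A]-\bigcup_x B_x$ may be adjacent to the parts of branch sets lying in $B\setminus A$. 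So their neighbourhoods need not lie in the current separator.
\end{enumerate}

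What is missing is the advancing lemma itself. Suppose you go from $X=A\cap B$ to a new separator $Y=A'\cap B'$ with $A\subseteq A'$ using the single bag $X\cup Y$. This is a legal step for a decomposition of $(G,S)$ only if $S\cap(A'\setminus A)\subseteq Y$; the rest of $A'\setminus A$ is then covered by (iii). For the width count you also need $|Y|\le t$. For the model you need disjoint $X$--$Y$ paths inside $G[A'\cap B]$ that prolong the active branch sets, plus room to attach a new branch set for $y$. You must then show that such a $Y$ exists and that no active branch set gets cut off from the unexplored side. This has to come from the absence of an $S$-rooted model of $T$ together with a suitable extremal choice of $(A,B)$. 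That Menger-type rerouting argument is the whole content of the theorem. Your last paragraph identifies it as ``the delicate point'' but does not carry it out, so as written the proposal is a plan rather than a proof.
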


The following lemma will be useful in multiple parts of the paper. 

\begin{lemma}[Corollary of Lemma~23 in \cite{hodor2024quickly}] \label{lem:erdos_posa_pw}
    Let $G$ and $H$ be graphs such that $H$ is connected, let $S\subseteq V(G)$, and let $k, w$ be natural numbers such that $\tw(G,S)<w$. 
    If $G$ does not contain $k$ vertex-disjoint $S$-rooted models of $H$, then, 
    there exists a set $X\subseteq V(G)$, such that $|X|\leq w(k-1)$, 
    and $G-X$ does not contain any $S$-rooted model of $H$.
\end{lemma}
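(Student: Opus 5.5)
Let $\mathcal D = \big(T, (W_x \mid x \in V(T))\big)$ be a tree decomposition of $(G,S)$ of width less than $w$, so every bag has at most $w$ vertices. The plan is the standard greedy argument on tree decompositions, by induction on $k$. For $k=1$ the set $X=\emptyset$ works, since $G$ then has no $S$-rooted model of $H$. For the general step, assume $G$ has at least one $S$-rooted model of $H$ (otherwise take $X=\emptyset$). Root $T$ at some node $r$. For a node $x$, let $G_x$ denote the subgraph of $G$ induced by the union of the bags in the subtree $T_x$ rooted at $x$, together with every component $C$ of $G-\bigcup_y W_y$ whose neighborhood lies in some bag of $T_x$. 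To make this unambiguous, assign each such component to one fixed node $y$ with $N_G(V(C))\subseteq W_y$. Choose a node $x$ of maximum depth such that $G_x$ contains an $S$-rooted model of $H$.

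The key step is to set $X_1 = W_x$ and show that $W_x$ separates every $S$-rooted model of $H$ in $G_x$ from everything else.

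\emph{Models meeting $G_x-W_x$.} Since $H$ is connected, the union of the branch sets of a model is connected. Hence an $S$-rooted model of $H$ in $G-W_x$ that meets $G_x - W_x$ lies entirely inside one component of $G_x - W_x$.

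\emph{Such a component lies below a child.} Each component of $G_x - W_x$ is contained in $G_{x'}-W_x$ for a single child $x'$ of $x$. Indeed, the vertices of $G_x - W_x$ that lie in bags belong only to bags strictly below $x$. The vertices lying in no bag belong to components assigned to nodes of $T_x$, and these attach either below a child or entirely inside $W_x$. In the latter case the component $C$ is itself a component of $G-W_x$. If $C$ is assigned to $x$, it cannot contain an $S$-rooted model, because $S$ is contained in the bags. This is the point where condition~(ii) of the definition is used.

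\emph{Conclusion of the separation.} By maximality of depth, no $G_{x'}$ contains an $S$-rooted model. Therefore every $S$-rooted model of $H$ in $G-W_x$ lies in $G' := G - V(G_x)$.

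Now $G'$ does not contain $k-1$ vertex-disjoint $S$-rooted models of $H$: together with the model found in $G_x$ they would give $k$ disjoint ones. Moreover $\tw(G',S)<w$, since restricting $\mathcal D$ to $T - T_x$ and intersecting bags with $V(G')$ gives a tree decomposition of $(G',S\setminus V(G_x))$. Checking condition~(iii) for $G'$ is the main technical obstacle. A component of $G'$ minus the remaining bags is either an old leftover component or arises from vertices of $G_x$ removed. In the second case its neighbourhood lies in $W_x \cap W_{\mathrm{parent}(x)}$, which is contained in a bag of $T-T_x$; if $x=r$ then $G'$ is empty and nothing is needed. The induction then gives $X'\subseteq V(G')$ with $|X'|\leq w(k-2)$ hitting all $S$-rooted models in $G'$.

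Finally take $X = W_x\cup X'$, so $|X|\leq w + w(k-2) = w(k-1)$. Any $S$-rooted model of $H$ in $G-X$ avoids $W_x$, so by the separation argument it lies in $G'$, and there it meets $X'$, a contradiction. The routine but delicate part is the bookkeeping of leftover components (condition~(iii)) in both the separation argument and the inheritance to $G'$. I expect to handle it by the fixed assignment of components to nodes described above.
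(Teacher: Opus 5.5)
Your argument is correct, and it is essentially the standard proof behind the cited Lemma~23 of Hodor, La, Micek, and Rambaud, which the paper invokes without proof. There, each connected $S$-meeting subgraph occupies a subtree of the decomposition tree, and your deepest-node induction is just the usual proof of the Helly-type property for subtrees; it yields at most $k-1$ bags, each of size at most $w$.

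One small remark: the `second case' in your check of condition (iii) for $G'$ never occurs. Every vertex of $G'$ that lies in some bag of $\mathcal D$ already lies in a bag of $T-T_x$. So the leftover components of $G'$ are exactly the old components assigned to nodes outside $T_x$, and (iii) is inherited directly.
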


Next, we introduce a key object, which is a variant of a definition appearing in~\cite{GJNW23}\footnote{For readers familiar with~\cite{GJNW23}, the main change here is the addition of roots.}.
Given a graph $G$ and a set $S\subseteq V(G)$, we define a sequence $\C_0(G, S), \C_1(G, S), \C_2(G, S), \dots$ of sets of subgraphs of $G$, inductively as follows. 
The set $\C_0(G, S)$ is the set of all single-vertex subgraphs of $G$ whose unique vertex belongs to $S$. 
For $t\geq 1$, the set $\C_t(G, S)$ is the set of all connected subgraphs $H$ of $G$ that have the following form:
\begin{enumerate}[label=(C\arabic*)]
    \item\label{item:def_Ct:star} $H$ is the union of three vertex-disjoint subgraphs $H_1, H_2, H_3 \in \C_{t-1}(G, S)$ and three paths $P_1, P_2, P_3$ such that the three paths share a common endpoint $v$ and are otherwise vertex-disjoint, the vertex $v$ is outside $H_1, H_2, H_3$, and for every $i\in [3]$, the path $P_i$ has its other endpoint in $V(H_i)$ and is otherwise vertex-disjoint from $H_1, H_2, H_3$; or
    \item\label{item:def_Ct:triangle} $H$ is the union of three vertex-disjoint subgraphs $H_1, H_2, H_3 \in \C_{t-1}(G, S)$ and three paths $P_1, P_2, P_3$ such that 
    the interiors of $P_1,P_2,P_3$ are pairwise vertex-disjoint and vertex-disjoint from $V(H_1) \cup V(H_2) \cup V(H_3)$,
    and for each $i \in [3]$,
    $P_i$ has one endpoint in $H_{i+1}$ and the other in $H_{i+2}$ (where indices are taken cyclically). 
\end{enumerate}
See \Cref{fig:def_Ct} for an illustration of this definition.
\begin{figure}
    \centering
    \includegraphics{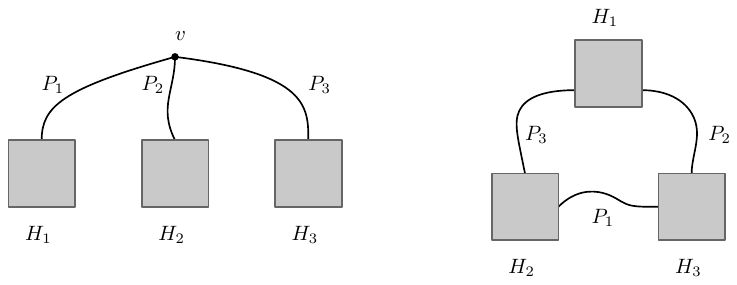}
    \caption{The two cases \ref{item:def_Ct:star} (on the left) 
        and \ref{item:def_Ct:triangle} (on the right)
        in the definition of $\mathcal{C}_t(G,S)$.}
    \label{fig:def_Ct}
\end{figure}
The following lemma is adapted from Lemmas~2.1 and~2.2 in~\cite{GJNW23}, we give its proof for completeness. 
\begin{lemma}\label{lemma:properties_of_Ct}
    Let $G$ be a graph and let $S\subseteq V(G)$. 
    Then, for every nonnegative integer $t$, the following holds for every $H\in \C_t(G, S)$:
    \begin{enumerate}[label={\normalfont(\alph*)}]
        \item $H$ is connected and has maximum degree at most $2t+1$; \label{prop:max_degree} 
        \item $H$ contains an $S$-rooted model of a complete binary tree of height $t$; \label{prop:CBT} 
        \item $H$ contains an $S$-rooted model of every $t$-vertex tree. \label{prop:every Ct_contains_every_tree}
    \end{enumerate}
\end{lemma}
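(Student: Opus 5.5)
All three properties are proved by induction on $t$, following the definition of $\C_t(G,S)$; the base case $t=0$ is a single vertex of $S$, for which (a)--(c) are immediate. Property (c) for $t=0$ is vacuous, or trivial if one regards the empty tree as having the empty model.

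For (a), connectivity is immediate in both cases \ref{item:def_Ct:star} and \ref{item:def_Ct:triangle}, since each $H_i$ is connected by induction and the paths link them together. For the degree bound, a vertex of $H$ lies either in the interior of some path $P_i$ (degree $2$), or is the centre $v$ in case \ref{item:def_Ct:star} (degree $3\leq 2t+1$), or lies in some $H_i$. A vertex of $H_i$ has degree at most $2(t-1)+1$ inside $H_i$. Each path $P_j$ contributes at most one extra edge at each of its endpoints, and in case \ref{item:def_Ct:triangle} a vertex of $H_i$ can be an endpoint of the two paths $P_{i+1}$ and $P_{i+2}$. Hence the degree is at most $2t-1+2=2t+1$.

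For (b), take by induction $S$-rooted models $\mathcal M_i$ of the complete binary tree of height $t-1$ inside each $H_i$. We build a root branch set $B_r$ and attach the roots of two of these models to it.
\begin{itemize}
\item In case \ref{item:def_Ct:star}, let $B_r$ consist of $v$, the path $P_3$ minus its endpoint in $H_3$, and all of $H_3$. This set is connected and contains a vertex of $S$, because $H_3$ does. The paths $P_1$ and $P_2$ minus $v$ are then used to reach $H_1$ and $H_2$.
\item In case \ref{item:def_Ct:triangle}, let $B_r$ be $H_3$ together with the interiors of $P_1$ and $P_2$. These connect $H_3$ to $H_1$ and $H_2$.
\end{itemize}
The interior vertices of a path from $B_r$ to $H_i$ are appended to $B_r$, so the path ends with an edge into $H_i$. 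That endpoint lies in some branch set of $\mathcal M_i$, but not necessarily in the root's branch set. This is the main obstacle. To handle it, I would strengthen the induction hypothesis to the claim that for every vertex $u\in V(H)$ there is an $S$-rooted model of the complete binary tree of height $t$ in $H$ whose root branch set contains $u$.

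With this stronger form, the issue is resolved by choosing $\mathcal M_i$ with its root branch set containing the path endpoint. The strengthened statement itself is re-established as follows: given $u$, place it in whichever piece is chosen as the root piece. If $u$ lies in some $H_i$, or on the path leading to it, rename indices so that this piece is the one absorbed into $B_r$. In case \ref{item:def_Ct:triangle}, a vertex $u$ on the interior of $P_i$ is simply added to $B_r$ through the relevant path.

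For (c), I would not use (b) directly. Instead, I would prove by induction on $t$ the statement: for every tree $T$ on at most $t$ vertices (with $t\geq1$) and every $u\in V(H)$, $H$ contains an $S$-rooted model of $T$ using $u$ in some branch set. A simpler route is to note that the complete binary tree of height $t$ contains every tree on at most $t$ vertices as a minor, in fact as a subgraph after contraction. Any tree on $t$ vertices has height at most $t-1$ when rooted at a leaf, and its branching can be simulated by contracting binary-tree paths. Root branch sets remain $S$-rooted, because each contracted branch set contains an original branch set of the binary-tree model, which meets $S$. Verifying this embedding is routine. Thus the only delicate step overall is the root-anchoring strengthening in (b).
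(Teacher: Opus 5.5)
Your proposal is correct and follows the paper's proof. It uses the same degree count for (a), the same strengthening of (b) to models whose root branch set contains a prescribed vertex $u$ (with essentially the same root branch sets in cases (C1) and (C2)), and for (c) the same reduction to the fact that every $t$-vertex tree is a minor of the complete binary tree of height $t$, composed with (b). The one step you leave unverified is that last fact; the paper proves it by hanging the components of $T-r$, sorted by decreasing size so that the $i$-th has at most $t-i$ vertices, off a root-to-leaf path of the binary tree. Note that the height of $T$ is not what governs this embedding: what must be controlled is the extra depth spent simulating high-degree vertices by paths.
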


\begin{proof} 
    It is clear from the definition that every graph in $\C_t(G, S)$ is connected. 
    To prove \ref{prop:max_degree}, it remains to show the upper bound of $2t+1$ on the maximum degree. 
    The proof is by induction on $t$: 
    For $t=0$, graphs in $\C_t(G, S)$ have maximum degree $0$, 
    for $t=1$ they have maximum degree $3=2t+1$, and  
    for $t\geq 2$, they have maximum degree at most two more than those in $\C_{t-1}(G, S)$. 
    
    In order to show \ref{prop:CBT}, we prove the following slightly stronger statement, which helps the induction go through: 
    \begin{enumerate}[label={\normalfont(\alph*')}]
        \setcounter{enumi}{1}
        \item  For every nonnegative integer $t$, every $H\in \C_t(G, S)$, and every $u\in V(H)$, there is an 
        $S$-rooted model of a complete binary tree of height $t$ in $H$ such that the root branch set contains $u$. \label{prop:CBT'}
    \end{enumerate}
    
    For $t=0$, this is clear, so we assume $t \geq 1$ and that the result holds for $t-1$.
    Let $H \in \mathcal{C}_t(G,S)$.
    First suppose we are in the case \ref{item:def_Ct:star} of the definition of $\mathcal{C}_t(G,S)$, 
    and let $H_1, H_2, H_3, P_1, P_2, P_3, v$ be as in this definition.
    Without loss of generality, $u \in V(H_1) \cup V(P_1)$.
    Then, let $u_2, u_3$ be the endpoints of respectively $P_2, P_3$ distinct from $v$.
    By the induction hypothesis, $H_2$ (resp.\ $H_3$) contains an $S$-rooted model $\mathcal{M}_2$ (resp.\ $\mathcal{M}_3$)
    of a complete binary tree of height $t-1$ such that $u_2$ (resp. $u_3$) is
    in the branch set of the root.
    Then, the union of $\mathcal{M}_2$ and $\mathcal{M}_3$ together with 
    the new branch set $B = V(H_1) \cup V(P_1) \cup (V(P_2) \setminus \{u_2\}) \cup (V(P_3) \setminus \{u_3\})$
    yields an $S$-rooted model of the complete binary tree of height $t$ with $u$ in the
    branch set $B$ of the root.
    
    Now suppose we are in the case \ref{item:def_Ct:triangle} of the definition of $\mathcal{C}_t(G,S)$, 
    and let $H_1, H_2, H_3, P_1, P_2, P_3$ be as in this definition.
    Without loss of generality, $u \in V(H_1) \cup (V(P_2) \setminus V(H_3))$.
    Let $u_2$ be the endpoint of $P_3$ in $H_2$,
    and let $u_3$ be the endpoint of $P_2$ in $H_3$.
    Then, by the induction hypothesis,
    $H_2$ (resp.\ $H_3$) contains an $S$-rooted model $\mathcal{M}_2$ (resp.\ $\mathcal{M}_3$)
    of a complete binary tree of height $t-1$ such that $u_2$ (resp. $u_3$) is
    in the branch set of the root.
    Then, the union of $\mathcal{M}_1, \mathcal{M}_2$ with the new branch set
    $B = V(H_1) \cup (V(P_2) \setminus V(H_3)) \cup V(P_3)\setminus V(H_2)$ yields a model of a complete binary
    tree of height $t$ with $u$ lying in the root branch set $B$.
    This proves \ref{prop:CBT'}.
    
    Finally, to deduce \ref{prop:every Ct_contains_every_tree},
    it is enough to show that every tree on $t$ vertices is a minor of
    the complete binary tree $B_t$ of height $t$.
    This is a well-known observation, but we include a proof for completeness.
    Actually, we show that for every tree $T$ on $t$ vertices rooted at a vertex $r$,
    there is a model of $T$ in $B_t$ whose root branch set contains the root of $B_t$.
    For $t=1$, this is clear, so suppose $t \geq 2$.
    Let $T_1, \dots, T_c$ be the connected components of $T-r$ with $|V(T_1)| \geq \dots \geq |V(T_c)|$,
    and let $H_1, \dots, H_{t-1}$ be the connected components of the forest obtained from $B_t$
    by removing the vertex set of a root-to-leaf path $Q$.
    Relabeling if necessary, we may assume that $H_i$ is isomorphic to $B_i$ for every $i \in [t-1]$,
    with the root having a neighbor in $V(Q)$ in $B_t$.
    Therefore, by the induction hypothesis, for every $i \in [c]$,
    since $|V(T_i)| \leq t-i$, there is a model $\mathcal{M}_i$ of $T_i$ in $H_{t-i}$
    whose root branch set contains a neighbor of $V(Q)$ in $B_t$.
    It follows that the union of these models
    $\mathcal{M}_{1}, \dots, \mathcal{M}_{c}$
    with the root branch set $V(Q)$ gives a model of $T$ in $B_t$, with the root of $B_t$ in the root branch set, as desired.
    This shows that every $t$-vertex tree is a minor of $B_t$, 
    and concludes the proof of the lemma.
\end{proof}

\begin{lemma}\label{lemma:no_Ct_implies_small_pathwidth}
    Let $t$ be a positive integer,
    let $G$ be a graph, and
    let $S \subseteq V(G)$.
    If $\pw(G,S) \geq 3 \cdot 2^{t+1}-5$, then $\mathcal{C}_t(G,S) \neq \emptyset$. 
\end{lemma}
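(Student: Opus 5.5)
The plan is to argue by induction on $t$, working directly with path decompositions of $(G,S)$. Going through \Cref{diestelSrootedoriginal} does not seem to suffice: an $S$-rooted model of a complete ternary tree does not obviously yield a member of $\C_t(G,S)$, since already a complete binary tree of height $1$ (a path) contains no element of $\C_1$. Write $f(t)=3\cdot 2^{t+1}-5$. Then $f(t)=2f(t-1)+5$, and this recurrence tells us what the induction step should prove.

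\emph{Splitting statement.} If $\pw(G,S)\geq 2p+5$, then one of the configurations \ref{item:def_Ct:star} or \ref{item:def_Ct:triangle} exists at the level of pieces. That is, there are three vertex-disjoint connected subgraphs $G_1,G_2,G_3$ of $G$, each with $\pw(G_i,S)\geq p$, and either
\begin{itemize}
\item three paths from a common vertex $v\notin\bigcup_i V(G_i)$, with $P_i$ ending in $G_i$ and otherwise avoiding $G_1\cup G_2\cup G_3$, or
\item three paths with pairwise disjoint interiors avoiding $\bigcup_i V(G_i)$, with $P_i$ joining $G_{i+1}$ to $G_{i+2}$.
\end{itemize}

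\emph{Induction step.} Given the splitting statement with $p=f(t-1)$, induction gives $H_i\in\C_{t-1}(G_i,S)\subseteq \C_{t-1}(G,S)$. We then extend each path $P_i$ inside the connected graph $G_i$ until it first hits $H_i$ (in the triangle case, both ends are extended inside the respective $G_j$). These extensions lie in distinct $G_i$, except that in the triangle case two extensions may lie in the same $G_j$, namely from the two endpoints of $P_i$ and $P_{i'}$ in $G_j$. In that case we stop each extension at its first vertex of $H_j$; if the two extensions meet before reaching $H_j$, we reroute and obtain a star configuration instead. This is routine.

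\emph{Base case $t=1$.} Here $f(1)=7$. If $\C_1(G,S)=\emptyset$, then every connected component $C$ of $G$ has no tree with three leaves in $S$ and no cycle through three vertices of $S$. So the vertices of $S$ in $C$ all lie on a single path $Q$ in which every $S$-to-$S$ connection is "linear". Taking a path decomposition along $Q$, with the components of $C-V(Q)$ being exactly those allowed to hang off bags, gives $\pw(G,S)\leq 6$ (in fact much less).

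\emph{Proof of the splitting statement (main obstacle).} I would prove the contrapositive. Suppose no such configuration exists, and $G$ is connected (pathwidth of $(G,S)$ is the maximum over components). Call a connected subgraph \emph{heavy} if its $(\cdot,S)$-pathwidth is at least $p$. The absence of the star and triangle configurations should force all heavy pieces to be arranged linearly. Concretely, I would show that there is a path $Q$ in $G$ with the following properties:
\begin{enumerate}
\item every component $C$ of $G-V(Q)$ has $\pw(C,S)<p$;
\item the neighbourhood of each such $C$ on $Q$ lies within a short subpath, spanning at most two consecutive "attachment zones". Otherwise, branching at a vertex or a cycle would produce three disjoint heavy pieces connected as in \ref{item:def_Ct:star} or \ref{item:def_Ct:triangle}.
\end{enumerate}
Then I build a path decomposition of $(G,S)$ as follows. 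Walk along $Q$ using bags containing $\le 2$ vertices of $Q$. At each step, insert the path decomposition of the component(s) attached there, each of width $<p$. Add to every bag the $O(1)$ attachment vertices of $Q$ still "open". A careful count gives width at most $2(p-1)+\,$constant $\leq 2p+4$. The factor $2$ accounts for at most two heavy-ish components being simultaneously active across an attachment point.

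I expect this last bookkeeping to be the hardest step. It requires choosing $Q$ correctly: I would take $Q$ to be a path meeting every heavy connected subgraph, chosen minimal. It also requires arguing, via the no-star/no-triangle hypothesis, that components of $G-V(Q)$ interleave in at most two layers. To get the exact constant $5$, I would adapt the proof of Theorem~7 of \cite{hodor2024quickly} (the rooted Excluded Tree theorem), which uses the same kind of linear-spine decomposition.
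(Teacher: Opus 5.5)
Your splitting statement is false, and no bookkeeping can fix it. If $\pw(G',S)\ge p$ then $|V(G')|\ge p+1$, because the single bag $V(G')$ is already a path decomposition of $(G',S)$. Take $G=K_{2p+6}$ and $S=V(G)$. Then $\pw(G,S)=2p+5$, but three vertex-disjoint pieces $G_1,G_2,G_3$ with $\pw(G_i,S)\ge p$ would need $3p+3>2p+6$ vertices once $p\ge 4$. Your very first use of the statement, with $p=f(1)=7$, already fails for $K_{20}$.

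Splitting off three disjoint heavy pieces costs a factor of about $3$, not $2$, and this is intrinsic to $\C_t$. By induction, every member of $\C_t(G,S)$ contains $3^t$ distinct vertices of $S$, one for each of the $3^t$ pairwise disjoint members of $\C_0(G,S)$ in its recursive decomposition. Hence for $G=K_{3^t-1}$ and $S=V(G)$ we have $\C_t(G,S)=\emptyset$. On the other hand, $\pw(G,S)=3^t-2\ge 3\cdot 2^{t+1}-5$ for every $t\ge 5$ (for $t=5$: $241\ge 187$). So the statement as written is false for $t\ge 5$, and no argument can establish the recurrence $f(t)=2f(t-1)+5$ you aimed for.

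Your objection to going through \Cref{diestelSrootedoriginal} is correct, and it pinpoints exactly where the paper's one-line proof breaks. Below its root, the complete ternary tree consists of complete binary trees, and these do not recursively yield members of $\C_{t-1}$. Concretely, let $G$ be the complete ternary tree of height $2$ and $S=V(G)$. Since $G$ is a tree, no configuration \ref{item:def_Ct:triangle} occurs. So every member of $\C_1(G,S)$ is a subdivided claw whose centre has degree $3$ in $G$. That centre is the root or a child of the root, and in both cases the claw contains the root. Thus $\C_2(G,S)=\emptyset$, although $G$ trivially has an $S$-rooted model of that tree.

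The repair keeps the paper's route but changes the tree. Let $T'_t$ be the rooted tree of height $t$ in which every non-leaf vertex has exactly three children, so $|V(T'_t)|=(3^{t+1}-1)/2$. An $S$-rooted model of $T'_t$ gives a member of $\C_t(G,S)$ by a genuinely straightforward induction via \ref{item:def_Ct:star}:
\begin{itemize}
\item each of the three subtrees below the root yields some $H_i\in\C_{t-1}(G,S)$ inside the union of its branch sets;
\item let $v$ be the median, in a spanning tree of the root branch set, of the three endpoints of edges going to the children's branch sets;
\item extend each leg through such an edge and then along a shortest path to $H_i$ within the $i$-th subtree.
\end{itemize}
\Cref{diestelSrootedoriginal} then gives the lemma with threshold $3^{t+1}-2$. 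This is precisely the bound $\pw\le 3^{t+1}-3$ that the proof of \cref{thm:main} actually uses. The threshold in \Cref{lemma:big_bramble_implies_rooted_tree} and in the definition of special intervals must be adjusted in the same way.
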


\begin{proof}
    Suppose $\pw(G,S) \geq 3\cdot 2^{t+1}-5 = 2\cdot \left (3 \cdot 2^{t}-2\right) -1$. 
    Since the complete ternary tree of height $t$ has $3 \cdot 2^{t}-2$ vertices, 
    by \Cref{diestelSrootedoriginal}, there is an $S$-rooted model of the complete ternary tree of height $t$ in $G$.
    A straightforward induction on $t$ then shows that $\mathcal{C}_t(G,S) \neq \emptyset$.
\end{proof}

The following lemma involves a rooted variant of the classical notion of `bramble' from graph minor theory. 

\begin{lemma}\label{lemma:big_bramble_implies_rooted_tree}
    Let $G$ be a graph,
    let $S \subseteq V(G)$,
    and let $\mathcal{B}$ be a set of connected 
    subgraphs of $G$ such that every two subgraphs in $\mathcal{B}$ have at least one vertex in common, and every 
    subgraph in $\mathcal{B}$ contains at least one vertex from $S$. 
    Let $t$ be a positive integer, and suppose that there is no set $X$ of at most $3 \cdot 2^{t+1}-5$ 
    vertices of $G$ such that $X$ meets all members of $\mathcal{B}$. 
    Then, $\mathcal{C}_t(G,S) \neq \emptyset$.
\end{lemma}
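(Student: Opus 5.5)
Our plan is to reduce to \Cref{lemma:no_Ct_implies_small_pathwidth}: we show that a bramble $\mathcal{B}$ with no small hitting set forces $\pw(G,S) \geq 3\cdot 2^{t+1}-5$, and that lemma then gives $\mathcal{C}_t(G,S)\neq\emptyset$. So suppose for contradiction that $(G,S)$ has a path decomposition $(W_1,\dots,W_m)$ of width less than $3\cdot 2^{t+1}-5$. Then every bag has at most $3\cdot 2^{t+1}-5$ vertices. We will exhibit a bag $W_i$ that meets every member of $\mathcal{B}$, contradicting the hypothesis. (In fact we only use that this is a tree decomposition of $(G,S)$, so the argument proves the stronger statement with $\tw$.)

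Let $U=\bigcup_i W_i$. For each $B\in\mathcal{B}$ we note that $B$ contains a vertex of $S\subseteq U$, so $V(B)\cap U\neq\emptyset$. Define $I_B=\{i : W_i\cap V(B)\neq\emptyset\}$, which is nonempty. We first prove that $I_B$ is an interval of $[m]$. Indeed, $B$ is connected, and every vertex of $B$ outside $U$ lies in some component $C$ of $G-U$ with $N_G(V(C))\subseteq W_x$ for some $x$. Consider any walk in $B$ between two vertices of $U$ whose internal vertices all lie in such a component $C$. Its two endpoints lie in $N_G(V(C))$, and hence in a common bag $W_x$. We may therefore replace each such excursion by a "virtual adjacency" between vertices sharing a bag. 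After this replacement, $V(B)\cap U$ is connected in the graph on $U$ consisting of the edges of $G[U]$ together with all pairs of vertices that share a bag. Every pair adjacent in this auxiliary graph has intersecting (interval) occurrence sets. Since each vertex occupies an interval of bags, the union of these intervals over a connected set of vertices is an interval. This gives that $I_B$ is an interval.

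Next, any two members $B,B'\in\mathcal{B}$ share a vertex $v$. If $v\in U$, then the interval of $v$ lies in both $I_B$ and $I_{B'}$, so these intervals intersect. If $v\notin U$, then $v$ lies in a component $C$ of $G-U$. The sets $B$ and $B'$ both contain vertices of $S\subseteq U$, and each is connected, so each contains a vertex of $N_G(V(C))\subseteq W_x$. Hence $x\in I_B\cap I_{B'}$. So $\{I_B : B \in \mathcal{B}\}$ is a family of pairwise intersecting intervals. By the Helly property for intervals, there is an index $i$ lying in all of them, and so $W_i$ meets every $B\in\mathcal{B}$. Since $|W_i|\leq 3\cdot 2^{t+1}-5$, this contradicts the assumption that no set of at most $3\cdot 2^{t+1}-5$ vertices meets all members of $\mathcal{B}$.

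We conclude that $\pw(G,S)\geq 3\cdot 2^{t+1}-5$, and \Cref{lemma:no_Ct_implies_small_pathwidth} gives $\mathcal{C}_t(G,S)\neq\emptyset$. The main obstacle is the first step, showing that $I_B$ is an interval even though $B$ may wander through components of $G-U$ that are not covered by the decomposition. This is where the third condition of a decomposition of $(G,S)$ is needed: it lets each excursion through a component be replaced by a pair of vertices sharing a bag. The same condition is used again in the pairwise-intersection step when the common vertex lies outside $U$.
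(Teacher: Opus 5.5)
Your proof is correct and follows the paper's argument: assume a path decomposition of $(G,S)$ with bags of size at most $3\cdot 2^{t+1}-5$, use the Helly property on the intervals $I_B$ to find one bag meeting every member of $\mathcal{B}$, and conclude via \Cref{lemma:no_Ct_implies_small_pathwidth}. The only difference is that you spell out what the paper leaves implicit. You use the third condition on decompositions of $(G,S)$ to show that each $I_B$ is an interval, and that the $I_B$ pairwise intersect even when the shared vertex lies outside the union of the bags.
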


\begin{proof}
    By \Cref{lemma:no_Ct_implies_small_pathwidth},
    it is enough to show that $\pw(G,S) \geq 3 \cdot 2^{t+1}-5$.
    Suppose for a contradiction that there is a path decomposition
    $(W_1, \dots, W_\ell)$ of $(G,S)$ of width less than $3 \cdot 2^{t+1}-5$.
    For every $B \in \mathcal{B}$,
    the set $I_B = \{i \in [\ell] \mid W_i \cap V(B) \neq \emptyset\}$ 
    is a nonempty interval, since $B$ is connected and contains some vertex from $S$.  
    Since there are no two vertex-disjoint members of $\mathcal{B}$,
    the intervals $I_B$ for $B \in \mathcal{B}$ pairwise intersect.
    By the Helly property of intervals, there is $i \in [\ell]$
    such that $W_i$ intersects every $B \in \mathcal{B}$.
    Since $|W_i| \leq 3 \cdot 2^{t+1}-5$, this is a contradiction.
\end{proof}

\begin{lemma}\label{lemma:K13_in_big_forests}
    Let $F$ be a forest of maximum degree $d$ and   
    with $c$ connected components. 
    Let $S$ be a nonempty set of leaves of $F$. 
    Then, there is a collection $\mathcal{H}$ of vertex-disjoint subtrees of $F$ such that
    \begin{enumerate}[label={\normalfont(\alph*)}]
        \item $|\mathcal{H}| \geq \frac{|S|-2c}{2d}$, and\label{item:lemma:K13_in_big_forest:H_big}
        \item in every $H \in \mathcal{H}$,
            there is a model of $K_{1,3}$ in which every leaf branch set contains a vertex from $S$.
            \label{item:lemma:K13_in_big_forest:lot_of_K13}
    \end{enumerate}
\end{lemma}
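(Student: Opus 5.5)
We argue component by component and then sum the bounds. Let $F_1,\dots,F_c$ be the components of $F$ and $S_i = S \cap V(F_i)$. It suffices to find, in each tree $F_i$ with $|S_i|\ge 3$, a family of at least $(|S_i|-2)/(2d)$ vertex-disjoint subtrees, each containing a model of $K_{1,3}$ whose three leaf branch sets meet $S_i$. Components with $|S_i|\le 2$ contribute nothing, which costs at most $2$ per component. Summing then gives $|\mathcal H| \geq \sum_i (|S_i|-2)/(2d) \geq (|S|-2c)/(2d)$. So we fix a tree $T$ with maximum degree at most $d$ and a set $S$ of its leaves, and prove the bound $(|S|-2)/(2d)$ by induction on $|S|$.

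\emph{Greedy cutting step.} Root $T$ at an arbitrary vertex $r$; if possible choose $r$ to be a leaf in $S$, though this is inessential. For a vertex $v$, let $T_v$ be the subtree below $v$ and $s(v) = |S \cap V(T_v)|$. If $s(r) \le 2$, there is nothing to prove. Otherwise, choose a vertex $v$ of maximum depth with $s(v) \ge 3$. Every child $u$ of $v$ then has $s(u)\le 2$, and $v$ has at most $d$ children, so $s(v) \le 2d$ (at most $2d-1$ if $v\neq r$). In $T_v$ we can pick three leaves of $S$ such that the three paths from $v$ to them are internally disjoint except at $v$. To see this, note that either three distinct children each carry an $S$-leaf, or some child $u$ carries two, and then the branching point inside $T_u$ serves as the center instead.

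More cleanly, the minimal subtree of $T_v$ spanning any three of its $S$-leaves is a subdivided $K_{1,3}$. Contracting each leg except its endpoint into the center yields a model of $K_{1,3}$ whose leaf branch sets are singletons in $S$. We put $H = T_v$ into $\mathcal H$, delete $T_v$ from $T$, and recurse on $T - V(T_v)$, which is still a tree containing the remaining $S$-leaves.

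\emph{Counting, and the main obstacle.} Each round consumes at most $2d$ elements of $S$ and produces one member of $\mathcal H$, and the process stops once fewer than $3$ elements of $S$ remain. The number of rounds is therefore at least $(|S|-2)/(2d)$. The main obstacle is that after deleting $T_v$, the vertex that was the parent of $v$ may become a leaf of the remaining tree, so the remaining tree is no longer ``$S$ a set of leaves''. However, the argument never uses that elements of $S$ are leaves of the \emph{current} tree, only that they are vertices of $T$ whose containing subtree is tracked by $s(\cdot)$, and $s(\cdot)$ is simply recomputed in the remaining tree.

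One must also check that a subtree $T_v$ containing $3$ elements of $S$, which are leaves of the original $T$ and lie in $T_v$ with $v$ not in $S$ unless $v$ is itself such a leaf, really contains a $K_{1,3}$ model. Three distinct leaves of $T$ inside $T_v$ span a subtree whose three ends are those leaves, so it is a subdivided claw. The degenerate case where the spanning subtree is a path cannot occur, because a path has only two ends and each of the three vertices, being a leaf of $T$, must be an end of its spanning subtree.
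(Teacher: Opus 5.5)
Your proof is correct and is essentially the paper's argument. In both, you reduce to a single component, cut off the subtree below a deepest vertex whose subtree contains at least three elements of $S$ (so at most $2d$ of them, since each child subtree holds at most two), observe that three leaves of the original tree span a subdivided claw, and recurse on the remainder, spending at most $2d$ elements of $S$ per member of $\mathcal{H}$. The only difference is bookkeeping: the paper first deletes leaves not in $S$ and roots at a non-leaf, so that the induction hypothesis applies verbatim to $F-V(F_u)$, whereas you count $S$-vertices directly and use only that they are leaves of the original tree, which works equally well. The ``obstacle'' you describe is not really one, since new leaves outside $S$ never violate the hypothesis; the only genuine corner case is an $S$-root becoming isolated, and your fix covers it anyway.
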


\begin{proof}
    We proceed by induction on $c+|S|+|V(F)|$.
    If $|S| \leq 2c$, then 
    the result follows with $\mathcal{H} = \emptyset$.
    Now suppose $|S| > 2c$.
    
    If $c>1$, then we apply the induction hypothesis on each connected component $C$ of $F$ that contains at least one vertex from $S$, 
    to obtain a collection $\mathcal{H}_C$ of size at least $\frac{|S \cap V(C)| - 2}{2d}$
    satisfying \ref{item:lemma:K13_in_big_forest:lot_of_K13}.
    Then, the union of these families is as desired.
    Now suppose $c=1$, that is, that $F$ is connected.

    If there is a leaf $x$ of $F$ which does not belong to $S$,
    then it is enough to call the induction hypothesis on $F-x$.
    Now suppose that every leaf of $F$ belongs to $S$. 
    Recall that $|S| > 2c$, thus $F$ has at least three leaves, and at least one non-leaf vertex.  
    Choose an arbitrary non-leaf vertex $r$ and fix it as the root of $F$. 
    Let $u$ be a vertex of $F$ at maximum distance from $r$
    such that the subtree $F_u$ of $F$ rooted at $u$ contains at least three leaves.
    Observe that
    \begin{enumerate}
        \item there is a model of $K_{1,3}$ in $F_u$ in which every leaf branch set 
            contains a vertex from $S$, and
        \item $F_u$ contains at most $2d$ leaves of $F$.
    \end{enumerate}    
    Then, by the induction hypothesis applied to every connected component of $F - V(F_u)$ that contains a vertex from $S$,
    there is a collection $\mathcal{H}'$ of vertex-disjoint subtrees of $F - V(F_u)$ such that
    $|\mathcal{H}'| \geq \frac{|S|-2d-2}{2d}$ and for every $H \in \mathcal{H}'$,
    there is a model of $K_{1,3}$ in which every leaf branch set contains a vertex from $S$.
    It then follows that $\mathcal{H} = \mathcal{H}' \cup \{F_u\}$ is as desired. 
\end{proof}

The following lemma will be our main tool to build elements in $\mathcal{C}_t(G,S)$.
Informally, it states that the rooted models of $K_{1,3}$ and $K_3$ have the
Erd\H{o}s-P\'osa property with a linear bounding function.

\begin{lemma}\label{lemma:excluding_K13_and_K3}
    Let $G$ be a graph and let $S \subseteq V(G)$.
    For every positive integer $k$, either
    \begin{enumerate}[label={\normalfont(\arabic*)}]
        \item there are $k$ pairwise vertex-disjoint subgraphs $H_1, \dots, H_k$ of $G$
            such that, for every $i \in [k]$, $H_i$ contains an $S$-rooted model of 
            $K_3$ or a model of $K_{1,3}$ in which all the leaf branch sets intersect $S$; or \label{prop:packing_of_K3_or_K13}
        \item \label{prop:hitting_paths} 
        there are a set $Z \subseteq V(G)$ with $|Z| \leq 11 (k-1)$ and
            an ordering $s_1, \dots, s_\ell$ of $S \setminus Z$
            such that, for every $i\in [\ell]$, $Z \cup \{s_i\}$
            intersects every $\{s_j \mid 1 \leq j < i \}$--$\{s_j \mid i < j \leq \ell\}$ path in $G$.
    \end{enumerate}
\end{lemma}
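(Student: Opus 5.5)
The plan is induction on $k$, with the case $k=1$ doing most of the structural work. Call a subgraph of $G$ an \emph{obstruction} if it contains an $S$-rooted model of $K_3$, or a model of $K_{1,3}$ whose three leaf branch sets meet $S$.

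\emph{Base case $k=1$.} Here $Z=\emptyset$ is forced, so we must show: if $G$ has no obstruction, then $S$ admits an ordering $s_1,\dots,s_\ell$ in which each $s_i$ separates its predecessors from its successors.

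First, consider three vertices of $S$ in one component of $G$. A minimal subtree connecting them is either a spider with three legs, which gives a forbidden $K_{1,3}$ model, or a path, in which case one of the three lies between the other two. So on each component the vertices of $S$ carry a ``betweenness'' relation.

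Next, the absence of an $S$-rooted $K_3$ model means there are no three vertices of $S$ joined pairwise by internally disjoint paths, even paths that avoid one another's branch sets. From this I would show that the betweenness relation is consistent with a linear order of $S\cap V(C)$ for each component $C$.

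Then I check the separation property. Suppose a path $Q$ goes from some $s_a$ ($a<i$) to some $s_b$ ($b>i$) and avoids $s_i$. Combining $Q$ with a path through $s_i$ yields either a spider with three $S$-leaves or a cycle through three $S$-vertices, i.e.\ an $S$-rooted $K_3$. Either way this contradicts the assumption.

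Finally, I concatenate the orderings of the different components. This is harmless, since there are no paths between distinct components.

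\emph{Inductive step.} Assume $G$ contains an obstruction, since otherwise the base case applies. I pick an obstruction that is minimal in a suitable sense: a subdivided $K_{1,3}$ with leaves $x_1,x_2,x_3\in S$, or a cycle through three vertices of $S$, chosen so that its legs or arcs contain no other vertex of $S$ and are shortest possible. I then aim for a separation $(A,B)$ of $G$ with the following properties:
\begin{itemize}
\item its order $|A\cap B|$ is at most $11$;
\item $G[A]$ contains an obstruction;
\item $G[A\setminus B]$ contains no vertex of $S$ lying in the same component as anything outside it, so that after deleting $A\cap B$ the vertices of $S$ in $A\setminus B$ can be placed at the start of the ordering in the harmless position of the base case.
\end{itemize}
Given such a separation, I apply induction to $G[B]-(A\cap B)$ with $k-1$, obtaining $Z'$ and an ordering. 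I set $Z=Z'\cup(A\cap B)$, which gives $|Z|\le 11(k-2)+11=11(k-1)$, and I prepend the leftover $S$-vertices of $A\setminus B$, handled as in the base case. A packing of $k-1$ obstructions in $B$ together with the one in $A$ would give outcome (1).

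\emph{Finding the separation.} This is the main obstacle. First I would try Menger's theorem between the minimal obstruction and the rest of $S$. If there are $12$ disjoint paths from the obstruction's branch sets to far-away vertices of $S$, I would try to reroute so as to produce two disjoint obstructions. \Cref{lemma:K13_in_big_forests}, with $d=3$, seems designed for exactly this step: a forest consisting of the minimal spider plus many attached paths ending in $S$ contains about $(|S|-2c)/6$ disjoint $K_{1,3}$'s with $S$-leaves. With enough connecting paths this contradicts the choice of obstruction, or lets us increase the packing.

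The constant $11$ should come from this counting. The cut has to contain the at most $3$ branch vertices of the minimal obstruction together with at most $8$ separators for the legs and arcs. Otherwise the forest lemma gives two disjoint obstructions, so the minimal one could be replaced by a strictly smaller obstruction. I expect the delicate part to be the combined bookkeeping: ensuring that the $S$-vertices left on the $A$ side are linearly ordered, and that the ordering glues correctly with the one produced by induction on the $B$ side.
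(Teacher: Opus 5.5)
Your inductive step has a genuine gap: the separation it relies on need not exist. Since $A\cap B\subseteq Z$ and $Z'\subseteq B\setminus A$, the graph $G-Z$ restricted to $A\setminus B$ is exactly $G[A\setminus B]$. An ordering as in (2) can only exist if $G-Z$ has no obstruction: among three roots of an obstruction, the one that is in the middle of the ordering is avoided by a path joining the other two. So you need a separation of order at most $11$ with an obstruction in $G[A]$ but none in $G[A\setminus B]$.

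Such separations need not exist. Take a huge grid, let $S$ be $100$ vertices pairwise at distance much larger than $11^2$, and let $k>100$. There are no $k$ disjoint obstructions, and the lemma holds trivially here (put all but two roots into $Z$), but your step must still produce the separation. Deleting any $11$ vertices of the grid leaves one component containing all but $O(1)$ vertices, hence all but at most $11$ roots, hence an obstruction. So either $A\setminus B$ contains this component, and then $G[A\setminus B]$ has an obstruction, or $|A|=O(1)$, and then $G[A]$ cannot connect three roots. The same argument works for any constant in place of $11$. This ``cut off one model with a small separation'' strategy is exactly what the paper says fails in the rooted setting.

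Relatedly, applying Menger to a \emph{single} minimal obstruction cannot yield a contradiction. Rerouting to obtain two disjoint obstructions contradicts nothing when $k>2$.

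The missing idea is to run Menger against a \emph{maximum packing}. The paper takes a maximum family $\mathcal{H}$ of disjoint claw-type obstructions, so $|\mathcal{H}|\le k-1$. Each member is minimalized to a subdivided claw with leaves in $S$ and leg interiors avoiding $S$. If three disjoint $V(\bigcup\mathcal{H})$--$S$ paths ended in the interior of one leg, that claw could be replaced by two disjoint ones. Hence a maximum family of such paths has at most $(4+3\cdot 2)(k-1)$ members, and Menger yields $Z_0$ with $|Z_0|\le 10(k-1)$. Then $G-Z_0$ has no claw-type obstruction, because any such obstruction meets $\bigcup\mathcal{H}$ by maximality and contains a root.

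There are at most $k-1$ components of $G-Z_0$ containing an $S$-rooted $K_3$. In each of them, deleting one arbitrary root destroys all such $K_3$'s, since a surviving one plus a path to the deleted root would give a claw-type obstruction. This gives $|Z|\le 11(k-1)$.

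Finally, the torso of $S\setminus Z$ in $G-Z$ has maximum degree at most $2$ and no cycle, so it is a disjoint union of paths, which gives the ordering directly. This is also the clean way to settle your base case, where the claim that betweenness comes from a linear order was left unproved.
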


\begin{proof}
    Let $k$ be a positive integer,
    and let $\mathcal{C}$ be the family of all the connected subgraphs of $G$ containing an $S$-rooted model of $K_3$ or a model of $K_{1,3}$ in which every leaf branch set intersects $S$.
    Suppose \ref{prop:packing_of_K3_or_K13} does not hold: there are no $k$ pairwise vertex-disjoint members of $\mathcal{C}$. 
    We will show that \ref{prop:hitting_paths} holds. 
    
    Let $\mathcal{C}_1$ be the family of all the connected subgraphs $H$ of $G$
    containing  a model of $K_{1,3}$ in which all the leaf branch sets intersect $S$.
    First, we show that there is a set $Z \subseteq V(G)$ of size at most $11(k-1)$
    that intersects every member of $\mathcal{C}_1$.
    Let $\mathcal{H}$ be a collection of pairwise vertex-disjoint members of $\mathcal{C}_1$
    with $|\mathcal{H}|$ maximum.
    By taking the members $H$ of $\mathcal{H}$ with $V(H),E(H)$ inclusion-wise minimal,
    we have that every $H \in \mathcal{H}$ consists in three internally vertex-disjoint
    paths $Q_{H,1}, Q_{H,2}, Q_{H,3}$ sharing exactly one endpoint $v_H$, 
    and with the other endpoints in $S$.
    Moreover, the interiors of $Q_{H,i}$ for $H \in \mathcal{H}$ and $i \in \{1,2,3\}$
    are disjoint from $S$. 

    Let $\mathcal{P}$ be a maximum-size collection of pairwise vertex-disjoint 
    $V(\bigcup \mathcal{H})$--$S$ paths in $G$. 
    (Here, the notation $\bigcup \mathcal{H}$ denotes the graph that is the union of the graphs in $\mathcal{H}$.)
    Note that these paths may have length $0$.
 
    \begin{claim}\label{claim:excludingK13}
        For every $H \in \mathcal{H}$ and $i \in \{1,2,3\}$,
        at most two paths in $\mathcal{P}$ have 
        an endpoint in the interior of $Q_{H,i}$.
    \end{claim}

    \begin{proofclaim}
        Suppose there are $H \in \mathcal{H}$, $i \in \{1,2,3\}$
        and $P_1, P_2, P_3 \in \mathcal{P}$ with $P_1,P_2,P_3$ distinct and each having an endpoint
        in the interior of $Q_{H,i}$.
        Note that these paths have length at least one, since the interior of $Q_{H,i}$
        is disjoint from $S$.
        Then, $H \cup P_1 \cup P_2 \cup P_3$ contains two vertex-disjoint subgraphs $H_1, H_2$
        both containing a model of $K_{1,3}$
        in which every leaf branch set intersects $S$ (see \Cref{fig:claimK13}).
        But then, the family $(\mathcal{H} \setminus \{H\}) \cup \{H_1, H_2\}$
        contradicts the maximality of $|\mathcal{H}|$.
    \end{proofclaim}

    \begin{figure}
        \centering
        \includegraphics{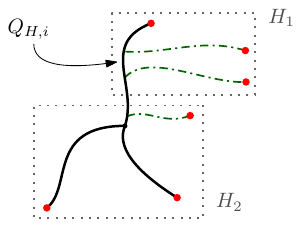}
        \caption{Proof of \Cref{claim:excludingK13}. 
            The marked vertices, in red, are vertices in $S$, 
            and the dashed paths, in green, are the paths $P_1,P_2,P_3 \in \mathcal{P}$.
            We find two vertex-disjoint subgraphs $H_1,H_2$ both containing 
            a model of $K_{1,3}$ in which leaf branch sets intersect $S$.}
        \label{fig:claimK13}
    \end{figure}

    Since every $H \in \mathcal{H}$ has $4$ vertices not in the interior of one of 
    $Q_{H,1},Q_{H,2},Q_{H,3}$, 
    it follows from \Cref{claim:excludingK13} that 
    \[
        |\mathcal{P}| \leq (4 + 3 \cdot 2) |\mathcal{H}| \leq 10(k-1).
    \]
    Hence, by Menger's Theorem, there is a set $Z_0 \subseteq V(G)$
    of size at most $10(k-1)$ that intersects every
    $V(\bigcup \mathcal{H})$--$S$ path in $G$.
    Now, for every connected subgraph $H\in \mathcal{C}_1$,
    we have that $H$ intersects $S$,
    as well as $V(\bigcup \mathcal{H})$ by maximality of $|\mathcal{H}|$,
    and so it follows that $Z_0$ meets $V(H)$.
    Therefore, 
    \begin{equation}\label{eq:Z0_hits_K13}
        \text{$G-Z_0$ has no model of $K_{1,3}$ in which 
            every leaf branch set intersects $S$.}\tag{$\star$}
    \end{equation}

    Let $C$ be a connected component of $G-Z_0$.
    If $C$ has an $S$-rooted model of $K_3$,
    then choose a vertex $s_C \in S \cap V(C)$.
    If there is still an $S$-rooted model of $K_3$ in $C-s_C$,
    then by considering a path from $s_C$ to this model in $C$,
    we deduce that $C$ contains an $S$-rooted model of $K_{1,3}$,
    contradicting \eqref{eq:Z0_hits_K13}.
    Therefore, $C-s_C$ has no $S$-rooted model of $K_3$.

    Let 
    \[
        Z = Z_0\cup  \{s_C \mid \text{$C$ connected component of $G-Z_0$
            having an $S$-rooted model of $K_3$}\}.
    \]
    Since there are no $k$ pairwise disjoint $S$-rooted models of $K_3$ in $G$,
    we have 
    \[
        |Z| \leq |Z_0| + (k-1) \leq 11(k-1).
    \]
    Moreover, by construction, $G-Z$ has no $S$-rooted model of $K_3$, nor
    any model of $K_{1,3}$ with all leaf branch sets intersecting $S$.

    This proves that $Z$ intersects every member of $\mathcal{C}$.
    In particular, every connected component $C$ of $G-(Z \cup S)$
    has at most $2$ neighbors in $S \setminus Z$.
    Now consider the graph $G'$ with vertex set $S \setminus Z$
    and edges all the pairs $ss'$ such that there is an $s$--$s'$ path in $G-Z$ internally disjoint from $S\setminus Z$ (this is the so-called ``torso'' of $S\setminus Z$ in $G-Z$).
    Since $Z$ intersects every member of $\mathcal{C}$,
    $G'$ has maximum degree at most $2$ and has no cycle.
    Hence $G'$ is a disjoint union of paths, and the result follows.
\end{proof}

\section{Proof of main result}
\label{sec:proof}

The heart of the proof of \cref{thm:main} is the following technical theorem. 

\begin{theorem}
    \label{thm:technical}
    There exists a function $h\colon \N \to \N$ such that, 
    for every graph $G$, every $S\subseteq V(G)$, and every integers $t,k$ with $t\geq 0$ and $k\geq 1$, at least one the following two properties holds: 
    \begin{enumerate}[label={\normalfont(\arabic*)}]
        \item $\C_t(G, S)$ contains $k$ vertex-disjoint subgraphs of $G$, 
        \item \label{outcome_X} there is $X\subseteq V(G)$ with $|X| \leq h(t) k$ such that $X$ meets all subgraphs of $G$ in $\C_t(G, S)$.
    \end{enumerate}
\end{theorem}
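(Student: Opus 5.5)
We argue by induction on $t$. For $t=0$, $\C_0(G,S)$ consists of the single vertices of $S$, so either $|S|\geq k$ or $X=S$ works with $h(0)=1$. Now let $t\geq 1$ and assume the statement for $t-1$ with constant $h(t-1)$. Suppose $\C_t(G,S)$ has no $k$ vertex-disjoint members. Set $m = ck$ for a suitable constant $c=c(t)$, and apply the induction hypothesis to $\C_{t-1}(G,S)$ with parameter $m$.

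\textbf{Case 1: small hitting set.} If there is $X_0$ with $|X_0|\leq h(t-1)m$ meeting every member of $\C_{t-1}(G,S)$, we add $X_0$ to our final hitting set. Every member of $\C_t$ contains members of $\C_{t-1}$, so $\C_t(G-X_0,S)=\emptyset$ and we are done with $h(t)\geq h(t-1)c$.

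\textbf{Case 2: many disjoint copies.} Otherwise there are $m$ vertex-disjoint members $H_1,\dots,H_m\in\C_{t-1}(G,S)$. Contract each $H_i$ to a single vertex $h_i$ and let $S'=\{h_1,\dots,h_m\}$; call the resulting graph $G'$. An $S'$-rooted model of $K_{1,3}$ with leaves in $S'$ yields three $H_i$'s joined to a common vertex by internally disjoint paths, which is \ref{item:def_Ct:star}; an $S'$-rooted $K_3$ similarly yields \ref{item:def_Ct:triangle}. (The $K_{1,3}$ centre may lie inside some $H_j$, in which case we take $H_j$ as one of the three parts.) Hence disjoint such models in $G'$ give disjoint members of $\C_t(G,S)$. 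Applying \Cref{lemma:excluding_K13_and_K3} to $(G',S')$ with parameter $k$, we obtain $Z'$ with $|Z'|\leq 11(k-1)$, after uncontracting, together with a linear ordering of the surviving $H_i$ such that each $H_i$ separates the earlier ones from the later ones in $G-Z$. Uncontracting a vertex $h_i\in Z'$ costs a whole $H_i$; we handle this either by deleting those $H_i$ from the family or by bounding their size.

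\textbf{Main obstacle: the path-like structure.} The remaining task is to hit all of $\C_t(G,S)$, not just the members built on the chosen $H_i$'s. The surviving copies form a linear chain in $G-Z$, each separating the two sides. Any member $H$ of $\C_t$ contains a $\C_{t-1}$-subgraph and hence meets some $H_i$ (via the Case 1 alternative applied locally). It also contains a branching structure, namely an $S$-rooted model of the complete binary tree by \Cref{lemma:properties_of_Ct}. I would show that such an $H$ cannot "pass through" many consecutive separators $H_i$ without the chain producing $k$ disjoint members of $\C_t$. This follows because two copies $H_i,H_j$ plus a branching part of $H$ produce \ref{item:def_Ct:star}. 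Consequently each member of $\C_t$ lives locally, between boundedly many consecutive $H_i$'s.

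Next I would cut the chain into $O(k)$ segments, each bounded by a small separator. The natural candidates are bounded-size pieces of $H_i$, using the maximum degree bound $2t+1$ and an $S$-rooted bramble argument via \Cref{lemma:big_bramble_implies_rooted_tree}, which controls what lives in each segment. Inside each segment, the pathwidth bound from \Cref{lemma:no_Ct_implies_small_pathwidth}, combined with \Cref{lem:erdos_posa_pw} applied to a connected witness graph, gives a hitting set of size linear in the local packing number. Summing over segments gives $O_t(k)$. This localization step is where I expect most of the difficulty, especially making the separators between consecutive $H_i$ of size bounded by a function of $t$ alone.
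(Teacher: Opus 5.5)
Your first step (contract disjoint members of $\C_{t-1}(G,S)$ and apply \Cref{lemma:excluding_K13_and_K3} to obtain a chain) is essentially the paper's. The ingredients that make the rest work are missing, however, and the steps you propose in their place would fail.

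You take the $m=ck$ disjoint copies returned by the induction hypothesis instead of a \emph{maximum} packing $\mathcal{P}$ of $\C_{t-1}(G,S)$. Then nothing forces a member of $\C_t(G,S)$ to meet a copy; ``Case 1 applied locally'' does not give this. More importantly, you lose the key consequence of maximality: for every subgraph $H$ of $G$, the number of pairwise disjoint members of $\C_{t-1}(H,S)$ is at most the number of copies that $H$ meets. This is the engine of the paper's proof. It ends by applying the induction hypothesis for $t-1$ inside a region $G_0$ that meets only $O_t(k)$ copies. This gives a hitting set of size $h(t-1)\cdot O_t(k)$ for $\C_{t-1}(G_0,S)$, and hence for $\C_t(G_0,S)$.

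Moreover, the copies that \Cref{lemma:excluding_K13_and_K3} puts into its hitting set cannot be deleted from $G$. Members of $\C_{t-1}$ have bounded degree but unbounded order, so their size cannot be bounded either. Merely discarding them from the family while keeping their vertices breaks the separation property of the ordering. The paper keeps them and separates the remaining copies from $Z_0\cup V(\bigcup Z'_0)$ by a Menger cut of at most $(4t+33)k$ vertices and copies. The degree bound enters exactly here, through \Cref{lemma:K13_in_big_forests}: more disjoint such paths would yield $k$ disjoint members of $\C_t(G,S)$.

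Your localization argument does not work either. Combining two copies with a branching part of $H$ produces a member of $\C_t$ that shares that branching part with $H$ (and with every other member built this way), so it never yields $k$ \emph{disjoint} members. The paper establishes no such locality. With a maximum packing the chain can be arbitrarily long compared to $k$, so you cannot afford separators between all consecutive segments, and bounded pieces of a copy need not separate anything. The paper instead proceeds as follows:
\begin{enumerate}
\item For every member $H$ of $\C_t$ avoiding $Z_0\cup V(\bigcup Z'_0)$, the indices of copies met by $H$ form a nonempty interval. By the Helly property, at most $k-1$ ``hub'' copies meet all such $H$.
\item The chain is cut into blocks of $\Theta_t(1)$ consecutive copies. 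Blocks with $3\cdot 2^{t+1}-5$ disjoint paths between their first and last copy number at most $k-1$, by \Cref{lemma:big_bramble_implies_rooted_tree}.
\item Menger separators of size $O_t(1)$ are taken only in the $O(k)$ non-special blocks adjacent to blocks that are special or contain a hub or a copy from the cut above.
\end{enumerate}
This confines every surviving member of $\C_t$ to $G_0$, where the recursion applies.

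Finally, your per-segment step fails as stated. A segment may contain members of $\C_t$, so \Cref{lemma:no_Ct_implies_small_pathwidth} gives no width bound there. The width bound one can get (via \Cref{diestelSrootedoriginal} applied to a disjoint union of copies of a tree) is only linear in the local packing number, so \Cref{lem:erdos_posa_pw} would give a quadratic hitting set. Besides, $\C_t$ is not the family of rooted models of a single connected graph.
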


Before proving \cref{thm:technical}, let us prove first that it quickly implies \cref{thm:main}. 

\begin{proof}[Proof of \cref{thm:main} assuming \cref{thm:technical}]
    We define $g \colon \N \to \N$ by $g(t) = h(t) + (3^{t+1}-2)$
    for every $t \in \N$, where $h(\cdot)$ is the function from \Cref{thm:technical}.
    Let $k$ be a positive integer,
    let $T$ be a tree on $t$ vertices,
    let $G$ be a graph, and
    let $S \subseteq V(G)$.
    Suppose that there are no $k$ pairwise vertex-disjoint subgraphs of $G$,
    each containing an $S$-rooted model of $T$.
    Then, by \Cref{lemma:properties_of_Ct}.\ref{prop:every Ct_contains_every_tree},
    every $H \in \mathcal{C}_t(G,S)$ contains an $S$-rooted model of $T$, and so
    there are no $k$ pairwise vertex-disjoint members of $\mathcal{C}_t(G,S)$.
    By \Cref{thm:technical}, there is a set $X_0 \subseteq V(G)$ 
    with $|X_0| \leq h(t) k$ such that $X_0$ meets all subgraphs of $G$ in $\mathcal{C}_t(G,S)$.
    In other words, $\mathcal{C}_t(G-X_0,S \setminus X_0) = \emptyset$.
    By \Cref{lemma:no_Ct_implies_small_pathwidth}, this implies that
    $\pw(G-X_0, S \setminus X_0) \leq 3 \cdot 2^{t+1}-6 \leq 3^{t+1}-3$. 
    Then, by \Cref{lem:erdos_posa_pw},
    there is a set $X_1 \subseteq V(G-X_0)$ of size at most $(3^{t+1}-2)(k-1)$
    that intersects every $S$-rooted model of $T$ in $G-X_0$. 
    Then, for  $X = X_0 \cup X_1$,
    we have that $G-X$ has no $S$-rooted model of $T$, 
    and $|X| \leq h(t)k+(3^{t+1}-2)k = g(t) \cdot k$.
\end{proof}

We may now turn to the proof of \cref{thm:technical}. 

\begin{proof}[Proof of \cref{thm:technical}]
    Let $h\colon \NN \to \NN$ be defined by, for every $t \in \N$,
    \[
        h(t) = 
        \begin{cases}
            1 & \textrm{if $t=0$} \\
            3^{t+10}t \cdot h(t-1) & \textrm{if $t \geq 1$.}
        \end{cases}
    \]
    
    We proceed by induction on $t$.
    When $t=0$, the family $\mathcal{C}_0(G,S)$ consists of
    the $1$-vertex subgraphs of $G$ intersecting $S$. 
    Hence, for every $k \geq 1$, either $|S| \geq k$
    and so there are $k$ pairwise vertex-disjoint members of $\mathcal{C}_0(G,S)$,
    or $|S| \leq k-1$, and so $X=S$ is as wanted.

    Now suppose $t \geq 1$ and that the result holds for $t-1$.
    Let $k$ be a positive integer,
    let $G$ be a graph,
    and let $S \subseteq V(G)$ be such that there are no $k$ pairwise vertex-disjoint
    members of $\mathcal{C}_t(G,S)$.
    We have to show that \ref{outcome_X} holds. 
    
    Let $\mathcal{P} \subseteq \mathcal{C}_{t-1}(G,S)$ 
    be such that
    the members of $\mathcal{P}$ are pairwise vertex-disjoint and $|\mathcal{P}|$ is maximum.
    Note that the maximality of $|\mathcal{P}|$ implies the following.

    \begin{claim}\label{claim:maximality_of_P}
        For every subgraph $H$ of $G$,
        there are no $|\{P \in \mathcal{P} \mid V(P) \cap V(H) \neq \emptyset\}|+1$
        pairwise vertex-disjoint members of $\mathcal{C}_{t-1}(H,S)$.
    \end{claim}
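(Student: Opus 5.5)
We argue by contradiction, using only the maximality of $|\mathcal{P}|$. Fix a subgraph $H$ of $G$ and set $m = |\{P \in \mathcal{P} \mid V(P) \cap V(H) \neq \emptyset\}|$. Suppose that $\mathcal{Q}$ is a family of $m+1$ pairwise vertex-disjoint members of $\mathcal{C}_{t-1}(H,S)$; we build from it a larger disjoint family in $\mathcal{C}_{t-1}(G,S)$.

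First we check that $\mathcal{C}_{t-1}(H,S) \subseteq \mathcal{C}_{t-1}(G,S)$. By the convention of the paper, $\mathcal{C}_{t-1}(H,S)$ means $\mathcal{C}_{t-1}(H,S\cap V(H))$. The definition of $\mathcal{C}_j$ is intrinsic: a subgraph belongs to the family according to whether it decomposes into smaller members together with paths in the prescribed way. Each such path and each smaller member lies in $H \subseteq G$. A straightforward induction on $j$ therefore gives $\mathcal{C}_j(H,S) \subseteq \mathcal{C}_j(G,S)$ for every $j$. The base case holds because the single vertices of $H$ lying in $S$ are single vertices of $G$ lying in $S$.

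Next we perform an exchange. Let $\mathcal{P}_H$ be the set of members of $\mathcal{P}$ that meet $V(H)$, so $|\mathcal{P}_H| = m$, and let $\mathcal{P}' = (\mathcal{P} \setminus \mathcal{P}_H) \cup \mathcal{Q}$.

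\begin{itemize}
\item The members of $\mathcal{Q}$ are subgraphs of $H$, so each lies inside $V(H)$.
\item The members of $\mathcal{P} \setminus \mathcal{P}_H$ are disjoint from $V(H)$ by definition, hence disjoint from every member of $\mathcal{Q}$.
\item Members of $\mathcal{Q}$ are pairwise disjoint by assumption, and members of $\mathcal{P}\setminus\mathcal{P}_H$ are pairwise disjoint as a subfamily of $\mathcal{P}$.
\end{itemize}

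So $\mathcal{P}'$ is a family of pairwise vertex-disjoint members of $\mathcal{C}_{t-1}(G,S)$. Its size is $|\mathcal{P}| - m + (m+1) = |\mathcal{P}|+1$, which contradicts the maximality of $|\mathcal{P}|$.

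There is no real obstacle here. The only point that needs care is the inclusion $\mathcal{C}_{t-1}(H,S) \subseteq \mathcal{C}_{t-1}(G,S)$ together with the rooting convention that $S$ may contain vertices outside $H$.
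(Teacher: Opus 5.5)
Your proof is correct and follows the paper's own argument: replace the members of $\mathcal{P}$ that meet $V(H)$ by the $m+1$ disjoint members found in $H$, obtaining $|\mathcal{P}|+1$ pairwise vertex-disjoint members of $\mathcal{C}_{t-1}(G,S)$, which contradicts maximality. The paper uses the inclusion $\mathcal{C}_{t-1}(H,S)\subseteq\mathcal{C}_{t-1}(G,S)$ without comment, and your induction justifies it.
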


    \begin{proofclaim}
        If there is a family $\mathcal{P}'$ of
        $|\{P \in \mathcal{P} \mid V(P) \cap V(H) \neq \emptyset\}|+1$
        vertex-disjoint members of $\mathcal{C}_{t-1}(H,S)$,
        then $(\mathcal{P} \setminus \{P \in \mathcal{P} \mid V(P) \cap V(H) \neq \emptyset\}) \cup \mathcal{P}'$ is a family of $|\mathcal{P}|+1$
        pairwise vertex-disjoint members of $\mathcal{C}_{t-1}(G,S)$,
        contradicting the maximality of $|\mathcal{P}|$.
    \end{proofclaim}

    In particular, if $|\mathcal{P}| < (3^{t+1}+11)k$,
    then, 
    there are no $(3^{t+1}+11)k$
    pairwise vertex-disjoint members of $\mathcal{C}_{t-1}(G,S)$. 
    It follows by the induction hypothesis that there is a set $X$ of at most
    $(3^{t+1}+11)h(t-1)k \leq h(t)k$ vertices of $G$ meeting every member of $\mathcal{C}_{t-1}(G,S)$, and so every member of $\mathcal{C}_t(G,S)$ as well.
    Now assume $|\mathcal{P}| \geq (3^{t+1}+11)k$.

    Let $G'$ be the graph obtained from $G$ by contracting every $P \in \mathcal{P}$
    into a single vertex $u_P$,
    and let $S' = \{u_P \mid P \in \mathcal{P}\}$.

    \begin{claim}
        There are no $k$ pairwise vertex-disjoint subgraphs $H_1, \dots, H_k$ of $G'$
        such that,  for every $i \in [k]$, 
        $H_i$ contains either an $S'$-rooted model of $K_3$ or
        a model of $K_{1,3}$ in which every leaf branch set intersects $S'$.
    \end{claim}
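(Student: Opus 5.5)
We argue by contradiction: from $k$ such disjoint subgraphs $H_1,\dots,H_k$ of $G'$ we build $k$ pairwise vertex-disjoint members of $\mathcal{C}_t(G,S)$. The tool is to ``uncontract'' each $H_i$. For a subgraph $H$ of $G'$, let $\widehat{H}$ be the subgraph of $G$ obtained by replacing every vertex $u_P\in V(H)$ with the whole graph $P$, keeping for each edge of $H$ one corresponding edge of $G$. Since the contracted sets $V(P)$ partition part of $V(G)$ and every other vertex of $G'$ is an original vertex, disjoint subgraphs of $G'$ uncontract to disjoint subgraphs of $G$. So it suffices to show that each $\widehat{H_i}$ contains a member of $\mathcal{C}_t(G,S)$.

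\textbf{The $K_{1,3}$ case.} We may assume $H_i$ is minimal. Then it consists of a vertex $v$ and three internally disjoint paths $Q_1,Q_2,Q_3$ in $G'$ from $v$ to distinct vertices $u_{P_1},u_{P_2},u_{P_3}\in S'$, with interiors avoiding $S'$. By taking the first vertex of $S'$ on each path, we may also assume these are the only vertices of $S'$ met, except possibly $v$ itself.
\begin{itemize}
\item If $v\notin S'$, each $Q_j$ lifts to a path in $G$ from $v$ to $V(P_j)$ through original vertices only. Then $P_1,P_2,P_3$ together with these three paths form an instance of \ref{item:def_Ct:star}, so this graph lies in $\mathcal{C}_t(G,S)$.
\item If $v=u_P$ with $P\in\mathcal{P}$, then each $Q_j$ leaves $v$ through some edge of $G$ incident to a vertex of $P$, and these three attachment vertices may be distinct. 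This is the step needing care. Inside the connected graph $P$, take a minimal subtree connecting the (at most three) attachment vertices. Such a minimal tree is either a path or a subdivided $K_{1,3}$.
 \begin{itemize}
 \item If it is a subdivided $K_{1,3}$, its centre $c$ serves as the vertex $v$ of \ref{item:def_Ct:star}, with the three lifted paths extended inside $P$. But then $P$ itself is no longer available as one of the $H_j$.
 \item The fix is to ignore $P$ as a branch and use $c$ as a new centre, although this only produces \ref{item:def_Ct:star} using $P_1,P_2,P_3$ if $c\notin P_j$. This holds since the $P_j$ are disjoint from $P$.
 \end{itemize}
 So: lift the three paths, join them inside $P$ by the minimal tree, and take its branching vertex as the centre. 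If the tree is a path, the middle attachment vertex serves as the centre. In both cases we obtain three internally disjoint paths from a common vertex of $V(P)$ to $P_1,P_2,P_3$, avoiding $P_1,P_2,P_3$ except at their ends. This is again \ref{item:def_Ct:star}.
\end{itemize}

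\textbf{The $K_3$ case.} An $S'$-rooted model of $K_3$ in $G'$ contains, by minimality, a cycle through three vertices $u_{P_1},u_{P_2},u_{P_3}\in S'$. Splitting the cycle at these three vertices gives three paths in $G'$, each joining two of them with interior avoiding $S'$. These lift to three paths in $G$ with pairwise disjoint interiors, each joining $V(P_{j+1})$ to $V(P_{j+2})$ and otherwise avoiding $P_1\cup P_2\cup P_3$. This is exactly \ref{item:def_Ct:triangle}, so the graph lies in $\mathcal{C}_t(G,S)$.

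\textbf{Conclusion and main obstacle.} Since the $\widehat{H_i}$ are disjoint, we get $k$ disjoint members of $\mathcal{C}_t(G,S)$, contradicting our assumption. The main obstacle is the $K_{1,3}$ case with centre in $S'$: there one must check carefully that lifted paths through a contracted vertex stay internally disjoint and avoid the three chosen $P_j$. This works because every interior vertex of $Q_j$ is either an original vertex or, in the centre case only, $v$ itself.
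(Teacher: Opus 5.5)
Your strategy is the paper's: uncontract each $H_i$, note that the resulting subgraphs of $G$ are pairwise disjoint, and check that each contains a member of $\mathcal{C}_t(G,S)$. The paper only asserts this last point holds by construction. Your $K_{1,3}$ case is correct, including the case where the centre is some $u_P$. There, joining the three attachment points inside the connected graph $P$ gives a spider whose centre lies in $V(P)$, hence outside $P_1\cup P_2\cup P_3$, as \ref{item:def_Ct:star} requires. The two inner bullets with their self-corrections should be replaced by your final summarizing sentence.

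The $K_3$ case, however, rests on a false structural claim, and it fails in two ways.

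First, a minimal subgraph of $G'$ containing an $S'$-rooted model of $K_3$ need not be a cycle through three vertices of $S'$. In general it is a cycle $C$ together with three disjoint, possibly trivial, paths from $C$ to $S'$. For example, suppose $x,y,z\notin S'$ form a triangle and are adjacent to $u_{P_1},u_{P_2},u_{P_3}$ respectively. Then $\{x,u_{P_1}\},\{y,u_{P_2}\},\{z,u_{P_3}\}$ is an $S'$-rooted model of $K_3$, yet no cycle meets $S'$.

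Second, even when the minimal subgraph is a cycle through vertices of $S'$, it may pass through four or more of them. Such a cycle is still minimal, and it contains no $K_{1,3}$-model at all. Then no three of its $S'$-vertices cut $C$ into arcs whose interiors avoid $S'$, so your lifting through original vertices only does not apply.

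Both points are easily repaired.
\begin{enumerate}
\item Suppose one pendant path is nontrivial and leaves $C$ at a vertex $c$. Take that path, plus the two internally disjoint arcs of $C$ from $c$ to the attachment vertices of the other two pendant paths, extended along those paths. This is a spider with three leaves in $S'$, so you are back in the $K_{1,3}$ case.
\item Suppose all pendant paths are trivial and $C$ passes through $u_{P_1},u_{P_2},u_{P_3}$ and further vertices $u_Q$. Lift each arc by entering each such $Q$ and routing inside it. The graphs $Q$ are connected, pairwise disjoint, and disjoint from $P_1\cup P_2\cup P_3$. Hence the three lifted paths still have pairwise disjoint interiors avoiding $P_1\cup P_2\cup P_3$, which is all \ref{item:def_Ct:triangle} requires. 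They also lie in your $\widehat{H_i}$, since it contains every such $Q$.
\end{enumerate}
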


    \begin{proofclaim}
        Otherwise, for every $i \in [k]$,
        $G[(V(H_i) \setminus S') \cup \bigcup_{P \in \mathcal{P}, u_P \in V(H_i)} V(P)\}]$
        contains a member of $\mathcal{C}_t(G,S)$ by construction,
        and so there are $k$ pairwise vertex-disjoint members of $\mathcal{C}_t(G,S)$ in $G$,
        a contradiction.
    \end{proofclaim}

    Hence, by \Cref{lemma:excluding_K13_and_K3} applied to $(G',S')$, 
    there is a set $Z_0 \subseteq V(G)\setminus (\bigcup_{P\in \mathcal P}V(P))$ 
    and $Z'_0 \subseteq \mathcal{P}$ with 
    \[
        |Z_0| + |Z'_0| \leq 11 (k-1),
    \]
    and an ordering $P_1, \dots, P_\ell$ of $\mathcal{P} \setminus Z'_0$
    such that:
    \begin{equation}
        \text{For every $i \in [\ell]$,
              $Z_0 \cup V(\textstyle\bigcup Z'_0) \cup P_i$ meets all 
              $\textstyle\bigcup_{1 \leq j < i} V(P_j)$--$\textstyle\bigcup_{i<j\leq\ell} V(P_j)$ paths in $G$.} 
              \tag{$\star$}\label{eq:path_partition}
    \end{equation} 
    We fix such sets $Z_0,Z'_0$ and ordering $P_1, \dots, P_\ell$.
    Let $G''$ be the graph obtained from $G$ 
    by contracting every $P \in \mathcal{P} \setminus Z'_0$ into a single vertex $u_P$,
    and let $S'' = \{u_P \mid P \in \mathcal{P} \setminus Z'_0\}$.

    \begin{claim}
        There are no $(4t+33)k$ pairwise vertex-disjoint $S''$--$(Z_0 \cup V(\bigcup Z'_0))$ 
        paths in $G''$.
    \end{claim}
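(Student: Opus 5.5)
We argue by contradiction. Suppose there are $m=(4t+33)k$ pairwise vertex-disjoint $S''$--$(Z_0 \cup V(\bigcup Z'_0))$ paths $Q_1,\dots,Q_m$ in $G''$. Each $Q_j$ starts at some $u_{P}$ with $P \in \mathcal{P}\setminus Z'_0$ and ends at a vertex $z_j$, which is either in $Z_0$ or in $u_{P'}$ for some $P'\in Z'_0$. Note that in $G''$ the members of $Z'_0$ are not contracted, so $z_j$ is a vertex of some $P'\in Z'_0$. Since the paths are disjoint, the $u_P$ are distinct. The plan is to group these paths by their endpoint into few "hubs" and, around each hub that receives many paths, build members of $\mathcal{C}_t(G,S)$ of type \ref{item:def_Ct:star} or \ref{item:def_Ct:triangle}. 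These members are built by attaching three of the $P$'s through paths to a common connected structure. If we obtain $k$ disjoint ones, this contradicts the assumption on $\mathcal{C}_t(G,S)$.

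The key steps are as follows.

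\textbf{Step 1: endpoints in $Z_0$.} Each vertex $z\in Z_0$ is the endpoint of at most one $Q_j$. So at most $|Z_0|$ paths end in $Z_0$.

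\textbf{Step 2: endpoints in $V(P')$ for $P'\in Z'_0$.} Here we use \Cref{lemma:properties_of_Ct}\ref{prop:max_degree}: $P'$ is connected, has maximum degree at most $2t-1$, and lies in $\mathcal{C}_{t-1}(G,S)$. Suppose at least about $4t+O(1)$ paths end in $P'$. Take a spanning tree of $P'$ together with the endpoints $z_j$ as marked leaves, attaching pendant leaves if needed. Then \Cref{lemma:K13_in_big_forests}, with $d\le 2t$ and $c=1$, yields at least two disjoint subtrees, each containing a $K_{1,3}$-model with leaf branch sets hitting marked vertices. Each such subtree, together with three $Q_j$'s and the corresponding $P$'s (uncontracted in $G$), forms a member of $\mathcal{C}_t(G,S)$ of type \ref{item:def_Ct:star}: the center $v$ lies in the subtree, and the paths run through the tree and then along $Q_j$. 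However, this uses $P'$ as a connector rather than as one of $H_1,H_2,H_3$, so the resulting graphs are disjoint only if they use disjoint parts of $P'$. Lemma~\ref{lemma:K13_in_big_forests} guarantees exactly this.

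\textbf{Step 3: counting.} From Step 2, a hub $P'$ receiving $r$ paths yields about $(r-2)/(4t)$ disjoint members of $\mathcal{C}_t$. Alternatively, it is cleaner to contract: treat all of $Z_0\cup V(\bigcup Z'_0)$ as a forest. Apply \Cref{lemma:K13_in_big_forests} to the forest $F$ formed by spanning trees of the $P'\in Z'_0$ plus isolated vertices for $Z_0$, with $|S|=m$ marked leaves (pendant copies of the $z_j$), $c\le 11(k-1)$ components, and $d\le 2t$. This gives $|\mathcal{H}|\ge (m-22k)/(4t)$, and with $m=(4t+33)k$ this is at least $k$. Each $H\in\mathcal{H}$ combined with its three $Q_j$'s and $P$'s gives a member of $\mathcal{C}_t(G,S)$. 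These members are pairwise disjoint because the $H$'s, $Q_j$'s and $P$'s are disjoint. This is the contradiction.

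\textbf{Main obstacle.} The hard part is making sure that lifting from $G''$ to $G$ preserves disjointness and the exact form \ref{item:def_Ct:star}. Specifically, $Q_j$ with $u_P$ expanded is an $S$-path from $V(P)$ whose interior avoids all other $P$'s and hits the forest only at $z_j$. The three $P$'s serve as $H_1,H_2,H_3\in\mathcal{C}_{t-1}(G,S)$, and the center is a branch vertex of the $K_{1,3}$ inside $H$. Minimizing the subgraph ensures that the three connecting paths are internally disjoint, as in the minimality argument of \Cref{lemma:excluding_K13_and_K3}. I would also check the degenerate case where the center coincides with an endpoint $z_j$; this is handled by shortening. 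Finally, I would verify that the constant $33$ absorbs the $-2c\ge -22(k-1)$ loss.
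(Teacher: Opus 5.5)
Your proposal is correct and is essentially the paper's argument. You take spanning trees of maximum degree at most $2t-1$ for the members of $Z'_0$, attach the paths (or pendant leaves standing for them), apply \Cref{lemma:K13_in_big_forests} with $d \leq 2t$, and lift each claw to a member of $\mathcal{C}_t(G,S)$ of type \ref{item:def_Ct:star}, with the three members of $\mathcal{P}\setminus Z'_0$ as $H_1,H_2,H_3$. The only difference is bookkeeping: the paper first discards the at most $|Z_0|$ paths ending in $Z_0$ and uses $c=|Z'_0|$ components, whereas your Step~3 keeps them as trivial components costing $2$ each in the lemma's bound, which works equally well (and makes Step~2 unnecessary).
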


    \begin{proofclaim}
        Suppose for contradiction that there is a family $\mathcal{Q}$ of $(4t+33)k$
        pairwise vertex-disjoint $S''$--$(Z_0 \cup V(\bigcup Z'_0))$ paths in $G''$.

        Let $\mathcal{Q}_0$ be the set of all the paths in $\mathcal{Q}$
        with one endpoint in $Z_0$,
        and let $\mathcal{Q}'_0 = \mathcal{Q} \setminus \mathcal{Q}_0$.
        Note that $|\mathcal{Q}_0| \leq |Z_0| \leq 11 k$,
        and so $|\mathcal{Q}'_0| \geq (4t+33)k - 11k = (4t+22) k$.
        
        For every $P \in Z'_0$,
        we have $P \in \mathcal{C}_{t-1}(G,S)$,
        and so by \Cref{lemma:properties_of_Ct}.\ref{prop:max_degree},
        $P$ is connected and has maximum degree at most $2t-1$.
        Thus, we can fix a spanning tree $T_P$ of $P$ of maximum degree at most $2t-1$
        for each $P \in Z'_0$.
        Let $F$ be the union of these spanning trees $T_P$ for $P \in Z'_0$ 
        together with the paths in $\mathcal{Q}'_0$,
        and let $R$ be the set of the endpoints in $S''$ of the paths in $\mathcal{Q}'_0$.
        Then $F$ is a forest of maximum degree at most $2t$
        and $R$ is a subset of leaves of $F$. 
        Furthermore, $R\neq \emptyset$ since $\mathcal{Q}'_0$ is non empty. 
        By \Cref{lemma:K13_in_big_forests},
        there are at least 
        \[\frac{|R|-2|Z'_0|}{4t} = \frac{|\mathcal{Q}'_0|-2|Z'_0|}{4t}
        \geq \frac{(4t + 22)k - 2 \cdot 11 \cdot k}{4t} = k\] 
        pairwise vertex-disjoint subgraphs of $F$ such that
        each of them contains a model of $K_{1,3}$
        in which every leaf branch set intersects $S''$.
        Each such subgraph $H''$ of $G''$ yields a subgraph $G[(V(H'')\setminus S'') \cup \bigcup_{P \in \mathcal{P}, u_P \in V(H'')} V(P)]$ of $G$ containing
        a member of $\mathcal{C}_t(G,S)$,
        and so there are $k$ pairwise vertex-disjoint members of $\mathcal{C}_t(G,S)$,
        a contradiction.
    \end{proofclaim}
    By Menger's Theorem applied to $G''$,
    we deduce that there exist $Z_1 \subseteq V(G)\setminus  (\bigcup_{P\in \mathcal P \setminus Z'_0}V(P))$
    and
    $Z'_1 \subseteq \mathcal{P} \setminus Z'_0$
    such that 
    \[
        |Z_1| + |Z'_1| \leq (4t+33)k
    \]
    and $Z_1 \cup V(\bigcup Z'_1)$ intersects every 
    $S''$--$(Z_0 \cup V(\bigcup Z'_0))$ path in $G$.

    For every $H \in \mathcal{C}_t(G-(Z_0 \cup V(\bigcup Z'_0)), S)$,
    let $I_H = \{i \in [\ell] \mid V(H) \cap V(P_i) \neq \emptyset\}$.
    By the maximality of $|\mathcal{P}|$,
    $H$ intersects $\mathcal{P} \setminus Z'_0$, and so $I_H$ is nonempty.
    Moreover, because $H$ is connected and by \eqref{eq:path_partition},
    we have that $I_H$ is an interval.
    By the Helly property of intervals,
    and because there are no $k$ pairwise vertex-disjoint members of $\mathcal{C}_t(G,S)$,
    there exists $J \subseteq [\ell]$ of size at most $k-1$
    that meets every $I_H$ for $H \in \mathcal{C}_t(G-(Z_0\cup \bigcup Z'_0),S)$.
    Let 
    \[
        Z'_2 = \{P_j \mid j \in J\}.
    \]
    Note that $V(\bigcup Z'_2)$ meets every member of $\mathcal{C}_t(G - (Z_0 \cup V(\bigcup Z'_0)), S)$.
    See \Cref{fig:main_proof} for an illustration of the objects considered so far.

    \begin{figure}
        \centering
        \includegraphics{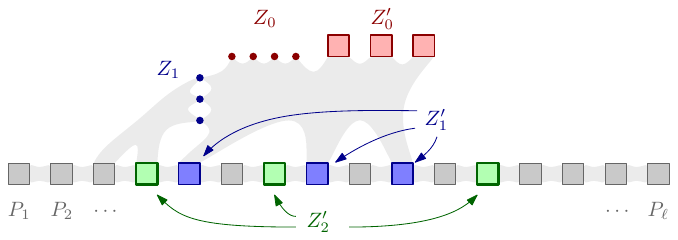}
        \caption{Illustration for the proof of \Cref{thm:technical}. The square shapes represent the elements of $\mathcal{P}$ and the potential connections between the considered objects are depicted in light gray.}
        \label{fig:main_proof}
    \end{figure}

    We partition the set $[\ell]$ into intervals $[i(1),i(2)-1], \dots, [i(m),i(m+1)-1]$ of size at least $3 \cdot 2^{t+1}-5$ and at most
    $2(3^{t+1}-2)-1$. This is possible since $\ell = |\mathcal{P}| - |Z'_0| \geq(3^{t+1}+11)k - 11k \geq 3\cdot 2^{t+1}-5$. 
    For every $j \in [m]$,
    we say that $[i(j), i(j+1)-1]$ is \defin{special} if there are $3\cdot 2^{t+1}-5$
    pairwise vertex-disjoint $V(P_{i(j)})$--$V(P_{i(j+1)-1})$ paths in $G - (Z_0 \cup V(\bigcup Z'_0))$.

    \begin{claim}
        There are at most $k-1$ special intervals among $[i(1),i(2)-1], \dots, [i(m), i(m+1)-1]$.
    \end{claim}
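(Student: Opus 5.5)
The plan is to show that each special interval contains a member of $\mathcal{C}_t(G,S)$ living essentially inside its own ``slab'', and that slabs of distinct special intervals can be chosen pairwise disjoint (at least for every other one). Then $k$ special intervals would give $k$ vertex-disjoint members of $\mathcal{C}_t(G,S)$, a contradiction.

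Fix a special interval $[i(j),i(j+1)-1]$ and let $\mathcal{L}$ be a family of $3\cdot 2^{t+1}-5$ vertex-disjoint $V(P_{i(j)})$--$V(P_{i(j+1)-1})$ paths in $G-(Z_0\cup V(\bigcup Z'_0))$. By \eqref{eq:path_partition}, for each $i(j)<i<i(j+1)-1$, every such path must meet $V(P_i)$, since $Z_0\cup V(\bigcup Z'_0)$ is avoided. Consider the subgraph $G_j$ formed by $\bigcup_{i\in[i(j),i(j+1)-1]}P_i$ together with the segments of paths in $\mathcal{L}$. I would apply \Cref{lemma:big_bramble_implies_rooted_tree} to the bramble $\mathcal{B}_j$ consisting of the subgraphs $L\cup P_{i}$ for $L\in\mathcal{L}$ and $i$ an interior index of the interval. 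Two members $L\cup P_i$ and $L'\cup P_{i'}$ intersect because $L$ meets $P_{i'}$. Each member contains a vertex of $S$ (each $P_i\in\mathcal{C}_{t-1}(G,S)$ contains a vertex of $S$), and each is connected. A hitting set of size at most $3\cdot 2^{t+1}-5$ would have to miss some $L$ (the paths are disjoint and there are more of them than... one needs a margin) and some $P_i$ (the interval has at least $3\cdot 2^{t+1}-5$ indices). To guarantee both misses simultaneously, I would, if needed, adjust constants, e.g.\ use intervals with one extra index, or use $L\cup P_i\cup P_{i'}$. The lemma then gives $\mathcal{C}_t(G_j,S)\neq\emptyset$.

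The main obstacle is disjointness across special intervals. The paths in $\mathcal{L}$ could wander far outside the slab between $P_{i(j)}$ and $P_{i(j+1)-1}$. However, by \eqref{eq:path_partition}, any excursion from the slab into the region beyond $P_{i(j+1)-1}$ must return through $V(P_{i(j+1)-1})$. So I would truncate each path at its first hit of $V(P_{i(j+1)-1})$ and start it at its last hit of $V(P_{i(j)})$. The interior of a truncated path then avoids all $P_i$ with $i\le i(j)$ or $i\ge i(j+1)-1$, and hence lies in components of $G-(Z_0\cup V(\bigcup Z'_0)\cup\bigcup_i V(P_i))$ attached only to indices within the interval. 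Distinct intervals then use disjoint sets of $P_i$, except possibly at shared boundaries, which do not occur since intervals partition $[\ell]$. They also use disjoint ``between'' components, provided each such component attaches to a consecutive range of indices.

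That last point, that a component cannot simultaneously touch two slabs, is exactly where I expect friction. A component touching $P_a$ and $P_b$ with $a<b$ in different intervals is an $a$--$b$ connection avoiding every intermediate $P_i$, which contradicts \eqref{eq:path_partition} as long as some index lies strictly between $a$ and $b$. If no index does, the component sits at a boundary between consecutive intervals. I would handle this by taking only every other special interval, or by dropping boundary indices from the bramble. Either way we obtain at least $k$ disjoint members of $\mathcal{C}_t$ from roughly $2k$ special intervals, and I would adjust the count accordingly if a factor of $2$ is lost.
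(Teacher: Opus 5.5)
Your plan is the same as the paper's: a bramble $\{L\cup P_a\}$, then \Cref{lemma:big_bramble_implies_rooted_tree}, then pairwise disjoint per-interval subgraphs $G_j$. But it does not reach the stated bound. You leave open whether a component can serve two consecutive special intervals, and you fall back on using every other special interval. That only shows there are at most $2k-2$ special intervals, not $k-1$.

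The missing step follows from what you already observed. A $V(P_{i(j)})$--$V(P_{i(j+1)-1})$ path $L$ meets $P_{i(j)}$ and $P_{i(j+1)-1}$ only in its two endpoints. By \eqref{eq:path_partition}, it meets no $P_a$ with $a$ outside $[i(j),i(j+1)-1]$. Now take any maximal subpath of $L$ outside $V(\bigcup\mathcal{P})$; it lies in a component $C$ of $G-(Z_0\cup V(\bigcup\mathcal{P}))$. The vertices of $L$ immediately before and after this subpath lie in $P_b$ and $P_{b'}$ with $b,b'$ in the interval. They cannot both be boundary indices, since then $C$ would join $P_{i(j)}$ to $P_{i(j+1)-1}$ while avoiding the intermediate $P_a$'s. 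Hence $C$ has a neighbor in some $P_a$ with $a\in[i(j)+1,i(j+1)-2]$.

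This is exactly why the paper defines $G_j$ as the union of the $P_a$, $a\in[i(j),i(j+1)-1]$, and of the components having a neighbor in an \emph{interior} $P_a$. By \eqref{eq:path_partition}, no component has neighbors in both $P_a$ and $P_b$ with $b\geq a+2$. Interior indices of different intervals differ by at least $3$, so the $G_j$ are pairwise disjoint and no factor $2$ is lost. In particular, the component you worry about, attached to $P_{i(j+1)-1}$ and $P_{i(j+1)}$, is used by neither interval. A path of interval $j$ never enters $P_{i(j+1)}$ and touches $P_{i(j+1)-1}$ only at its last vertex, and symmetrically for interval $j+1$.

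A smaller point: you are right that the bramble needs a margin, but your suggested fixes do not supply it. Picking one vertex on each $L\in\mathcal{L}$ meets every $L\cup P_i$, and also every $L\cup P_i\cup P_{i'}$. So the bramble has a hitting set of size $|\mathcal{L}|=3\cdot 2^{t+1}-5$, whereas \Cref{lemma:big_bramble_implies_rooted_tree} requires that no set of that size exists. The remedy is to ask for $3\cdot 2^{t+1}-4$ paths in the definition of special, and intervals with at least that many indices. Keep the two boundary indices in the bramble, since $L$ meets them. The paper's own write-up has the same off-by-one (``no set of less than''), so this only affects constants.
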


    \begin{proofclaim}
        Let $j \in [m]$,
        and suppose that the interval $[i(j), i(j+1)-1]$ is special.
        Let $G_j$ be the subgraph of $G$ induced by the union of the vertex set of
        all the $P_a$ for $a \in [i(j), i(j+1)-1]$ with the vertex sets of all
        the connected components $C$ of $G-(Z_0 \cup V(\bigcup \mathcal{P}))$ that have a neighbor
        in $\bigcup_{a \in [i(j)+1, i(j+1)-2]} V(P_a)$. 
        Since $[i(j), i(j+1)-1]$ is special, there is a family $\mathcal{Q}_j$ of $3\cdot 2^{t+1}-5$ pairwise vertex-disjoint $V(P_{i(j)})$--$V(P_{i(j+1)-1})$ paths in  $G - (Z_0 \cup V(\bigcup Z'_0))$, which are thus in $G_j$. 
        Now, let $\mathcal{H} = \{G[V(P_a) \cup V(Q)] \mid Q \in \mathcal{Q}_j, a \in [i(j), i(j+1)-1]\}$.
        Observe that, since every $P \in \mathcal{P}$ is connected by \Cref{lemma:properties_of_Ct}.\ref{prop:max_degree},
        every member of $\mathcal{H}$ is a connected subgraph of $G_j$
        that intersects $S$.
        Moreover, these subgraphs pairwise intersect,
        and there is no set of less than $3\cdot 2^{t+1}-5$ vertices meeting all of them.
        Hence, by \Cref{lemma:big_bramble_implies_rooted_tree},
        we have $\mathcal{C}_t(G_j,S) \neq \emptyset$,
        and so $G_j$ contains a member of $\mathcal{C}_t(G,S)$.

        Since the graphs $G_j$ for $j \in [m]$ are pairwise vertex-disjoint,
        and because there are no $k$ pairwise vertex-disjoint members of $\mathcal{C}_t(G,S)$,
        we deduce that there are at most $k-1$ special intervals among $[i(1), i(2)-1], \dots [i(m), i(m+1)-1]$.
    \end{proofclaim}

    We say that an interval $[i(j), i(j+1)-1]$ is \defin{marked}
    if
    \begin{enumerate}[label=(M\arabic*)]
        \item $[i(j), i(j+1)-1]$ is special, or
        \item $Z'_1 \cap \{P_a \mid a \in [i(j),i(j+1)-1]\} \neq \emptyset$, or
        \item $Z'_2 \cap \{P_a \mid a \in [i(j),i(j+1)-1]\} \neq \emptyset$.
    \end{enumerate}

    Note that the number of marked intervals is at most 
    \[
        (k-1) + |Z'_1| + |Z'_2| \leq (4t+35) k.
    \]

    An interval $[i(j),i(j+1)-1]$ is \defin{bordering}
    if it is not a marked interval but either $[i(j-1),i(j)+1]$ or $[i(j+1),i(j+2)-1]$ is.
    Note that there are at most $2(4t+35)k$ bordering intervals.
    Let $B \subseteq [m]$ be set of all the indices $j$ such that $[i(j), i(j+1)-1]$ is bordering.
    For each $j \in B$, 
    $[i(j),i(j+1)-1]$ is not special, and so there are no $3^{t+2}-2$ pairwise vertex-disjoint
    $V(P_{i(j)})$--$V(P_{i(j+1)-1})$ paths in $G - (Z_0 \cup V(\bigcup Z'_0))$.
    By Menger's Theorem, there is a set $X_j$ of at most $3\cdot 2^{t+2}-6$ vertices that intersects every such path.

    Let
    \[
        Z_3 = \textstyle\bigcup_{j \in B} X_j,
    \]
    and let
    \[
        Z' = Z'_0 \cup \big\{P_a \mid a \in [i(j),i(j+1)-1], \text{$[i(j),i(j+1)-1]$ marked or bordering interval}\big\}.
    \]
    Note that $Z'_0 \cup Z'_1 \cup Z'_2 \subseteq Z'$, and
    \[
        |Z'| \leq 11k + (2(3\cdot 2^{t+1}-5)-1)\cdot 3(4t+35)k \leq 3^{t+8}t \cdot k.
    \]

    \begin{claim}\label{claim:Z'_separates}
        Every $\left(\bigcup_{P \in Z'_0 \cup Z'_1 \cup Z'_2} V(P)\right)$--$\left(\bigcup_{P \in \mathcal{P} \setminus Z'} V(P)\right)$
        path in $G$ intersects $Z_0 \cup Z_1 \cup Z_3$.
    \end{claim}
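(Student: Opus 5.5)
The plan is to argue by contradiction: fix a path $Q$ in $G$ from $\bigcup_{P \in Z'_0 \cup Z'_1 \cup Z'_2} V(P)$ to $\bigcup_{P \in \mathcal{P}\setminus Z'} V(P)$ that avoids $Z_0 \cup Z_1 \cup Z_3$, and shorten it to a convenient subpath. Let $P_b \notin Z'$ be the piece containing the last vertex of $Q$. Let $Q'$ be the subpath of $Q$ starting at the last vertex of $Q$ lying in $V(\bigcup(Z'_0\cup Z'_1\cup Z'_2))$ and ending at the end of $Q$. Then $Q'$ starts in some piece $P^\ast \in Z'_0 \cup Z'_1 \cup Z'_2$, ends in $P_b$, meets no other vertex of $V(\bigcup(Z'_0 \cup Z'_1\cup Z'_2))$, and still avoids $Z_0\cup Z_1\cup Z_3$. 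We then split according to whether $P^\ast \in Z'_0$.

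\emph{Case $P^\ast \in Z'_0$.} Reversing $Q'$, take its subpath from the last vertex of $Q'$ in $\bigcup_{P\in\mathcal{P}\setminus Z'_0}V(P)$ to its endpoint in $V(\bigcup Z'_0)$; such a vertex exists since $P_b\notin Z'_0$. In $G''$, this subpath yields an $S''$--$(Z_0\cup V(\bigcup Z'_0))$ path. By construction of $Z_1, Z'_1$ (the Menger step), such a path meets $Z_1 \cup V(\bigcup Z'_1)$. The subpath avoids $V(\bigcup Z'_1)$: it is internally disjoint from $\mathcal{P}\setminus Z'_0$, and its remaining vertex lies either in $P_b\notin Z'_1$ or in some other piece it visits, which cannot be in $Z'_1$ by the choice of $Q'$. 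Hence it meets $Z_1$, a contradiction.

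\emph{Case $P^\ast = P_a \in Z'_1 \cup Z'_2$.} Now $Q'$ avoids $Z_0 \cup V(\bigcup Z'_0)$, so \eqref{eq:path_partition} applies to it. The key consequence is a ``no jumping'' property: for every $i$ strictly between $a$ and $b$, $Q'$ meets $V(P_i)$. Indeed, otherwise $Q'$ contains a subpath from $\bigcup_{j<i}V(P_j)$ to $\bigcup_{j>i}V(P_j)$ (or the reverse) avoiding $Z_0\cup V(\bigcup Z'_0)\cup V(P_i)$, contradicting \eqref{eq:path_partition}.

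By (M2)/(M3), the interval containing $a$ is marked. Since $P_b\notin Z'$, the interval containing $b$ is neither marked nor bordering. Assume without loss of generality $a<b$. Consider the last marked interval that precedes the interval of $b$ (it exists and is at or after $a$'s). The interval immediately following it is not marked, is adjacent to a marked one, and hence is bordering; call it $[i(j),i(j+1)-1]$ with $j\in B$. It is different from $b$'s interval, so $a< i(j)\le i(j+1)-1 < b$.

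By the no-jumping property, $Q'$ meets both $V(P_{i(j)})$ and $V(P_{i(j+1)-1})$. Hence $Q'$ contains a $V(P_{i(j)})$--$V(P_{i(j+1)-1})$ path (a minimal subpath between these two sets) in $G-(Z_0\cup V(\bigcup Z'_0))$. Such a path meets $X_j\subseteq Z_3$, again a contradiction.

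The main obstacle is the no-jumping step together with the choice of $Q'$. One must make sure that \eqref{eq:path_partition} really applies, which requires avoiding $Z_0\cup V(\bigcup Z'_0)$; this is why the $Z'_0$ case is handled separately via $Z_1$. One must also check that the extracted subpaths have the exact endpoint conditions required by the definitions of $A$--$B$ paths, taking minimal subpaths where needed. The bookkeeping at the ends of $[\ell]$ (first and last intervals) is harmless, since the bordering interval we use always lies strictly between two intervals.
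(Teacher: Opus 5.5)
Your proof is correct and takes essentially the same route as the paper. Both arguments use the $Z_1$/$Z'_1$ separator to handle paths reaching $V(\bigcup Z'_0)$, then use \eqref{eq:path_partition} to force the path through every piece lying between a marked interval and the interval of its far endpoint, so that it must meet the separator $X_j \subseteq Z_3$ of a bordering interval in between. The only difference is bookkeeping: the paper takes the shortest subpath from $P$ to a piece of $Z'_1 \cup Z'_2$ and leaves implicit why it avoids $V(\bigcup Z'_0)$, whereas your subpath starting at the last vertex in $V(\bigcup(Z'_0\cup Z'_1\cup Z'_2))$, together with the case split, makes that step explicit.
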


    \begin{proofclaim}
        Let $P \in \mathcal{P} \setminus Z'$
        and $P' \in Z'_0 \cup Z'_1 \cup Z'_2$,
        and let $Q$ be a $V(P)$--$V(P')$ path in $G$ vertex-disjoint from $Z_0 \cup Z_1$.
        Then, there is an interval $[i(j),i(j+1)-1]$ 
        which is neither special nor bordering
        and such that $P \in \{P_a \mid a \in [i(j),i(j+1)-1]\}$.
        If $P' \in Z'_0 \cup Z'_1$,
        then by the definition of $Z_1$ and $Z'_1$, 
        we have that $Q$ intersects $\bigcup_{P'' \in Z'_1} V(P'')$. 
        Otherwise, we still have that $Q$ intersects $\bigcup_{P'' \in Z'_2} V(P'')$.
        In both cases,
        we have that $Q$ intersects $V(P'')$ for some $P'' \in Z'_1 \cup Z'_2$.
        Fix such a $P''$ such that the subpath $Q'$ of $Q$ between $P$ and $P''$ has
        minimum length.
        In particular, $V(Q')$ is disjoint from $V(\bigcup Z'_0)$.
        Let $j'' \in [m]$ be such that $P'' \in \{P_a \mid a \in [i(j''),i(j''+1)-1]\}$.
        By definition, the interval $[i(j''),i(j''+1)-1]$ is marked. 
        Hence, there is a bordering interval $[i(j'),i(j'+1)-1]$
        with $j'' < j' <j$ or $j < j' < j''$.
        In both cases,
        since $Q'$ is a path in $G - (Z_0 \cup V(\bigcup Z'_0))$,
        the path $Q'$ intersects $P_{i(j')}$ and $P_{i(j'+1)-1}$ 
        by \eqref{eq:path_partition},
        and so $X_{j'} \subseteq Z_3$ intersects $Q$ by the definition of $X_{j'}$.
    \end{proofclaim}
    
    Let $G_0$ be the subgraph of $G$ induced by the union of the vertex sets
    of all the connected components $C$ of $G - (Z_0 \cup Z_1 \cup Z_3)$ that intersect $V\big(\bigcup Z'_0 \cup \bigcup Z'_1 \cup \bigcup Z'_2\big)$. 
    By \Cref{claim:Z'_separates}, $\{P \in \mathcal{P} \mid V(P) \cap V(G_0) \neq \emptyset\} \subseteq Z'$. 
    Therefore, by \Cref{claim:maximality_of_P},
    there are no $|Z'|+1$ pairwise vertex-disjoint members of $\mathcal{C}_{t-1}(G_0,S)$.
    By the induction hypothesis,
    it follows that
    there is a set $Z_4 \subseteq V(G_0)$ of size at most $h(t-1)(|Z'|+1) \leq h(t-1) \cdot 3|Z'| \leq 3^{t+9}t \cdot h(t-1)\cdot k$ 
    that meets every member of $\mathcal{C}_{t-1}(G_0,S)$,
    and so every member of $\mathcal{C}_t(G_0,S)$.

    Finally, we set
    \[
        X = Z_0 \cup Z_1 \cup Z_3 \cup Z_4.
    \]
    Note that  
    \begin{align*}
        |X|
        &\leq |Z_0| + |Z_1| + |Z_3| + |Z_4| \\
        &\leq 11 \cdot k + (4t+33) \cdot k + 2(4t+33)(3\cdot 2^{t+1}-6) \cdot k + 3^{t+9}t \cdot h(t-1) \cdot k \\
        &\leq \left(11 + 2(4t+33)3^{t+2} + 3^{t+9}t \cdot h(t-1)\right)\cdot k \\
        &\leq \left(11 + 3^{t+8}t + 3^{t+9}t \cdot h(t-1)\right)\cdot k \\
        &\leq 3^{t+10}t \cdot h(t-1) \cdot k = h(t) \cdot k,
    \end{align*}
    and so it only remains to show that
    $X$ meets every member of $\mathcal{C}_t(G,S)$.
    
    Let $H \in \mathcal{C}_t(G,S)$ and suppose for contradiction that
    $V(H)$ is disjoint from $X$.
    Since $H$ contains a member of $\mathcal{C}_{t-1}(G,S)$,
    and by the maximality of $\mathcal{P}$, 
    we have that $V(H)$ intersects at least one of $V(P)$ for $P \in \mathcal{P}$.
    Moreover, by the definition of $Z'_2$, and because $V(H)$ is disjoint from $Z_0$, $V(H)$ intersects $V(P)$ for some $P \in Z'_0 \cup Z'_2$.
    Hence, since $V(H)$ is disjoint from $Z_3$,
    $H$ is a subgraph of $G_0$, and so is a member of $\mathcal{C}_t(G_0,S)$.
    Therefore, $V(H)$ intersects $Z_4 \subseteq X$, a contradiction.
    This shows that $X$ meets every member of $\mathcal{C}_t(G,S)$,
    and concludes the induction.
\end{proof}

\section{Generalization to arbitrarily rooted forests}
\label{sec:forests}

In this section, we prove \Cref{thm:main_arbitrarily_rooted_forest}, about models of $(F,R)$ in $(G, S)$ for a forest $F$ and $R\subseteq V(F)$, and a  graph $G$ and $S\subseteq V(G)$.
The proof consists in first applying \Cref{thm:main} to a tree $T$ containing
$F$ as a spanning subgraph to hit all the $S$-rooted models of $T$, 
and then using both standard techniques on graphs of bounded pathwidth
and ideas from \citet{DJMM25} to finally hit every model of $(F,R)$ in $(G, S)$. 
In particular, we will rely on the following lemma,
which appears implicitly in \citet{D95}.

\begin{lemma}[\citet{D95}]\label{lemma:Diestel_lemma}
    Let $G$ be a graph and let $T$ be a tree.
    If $\pw(G) \geq |V(T)|-1$, then
    there is a separation $(A,B)$ of $G$ such that
    \begin{enumerate}[label={\normalfont(\alph*)}]
        \item there is a path decomposition $(W_1, \dots, W_\ell)$ of $G[A]$
            of width at most $|V(T)|-1$
            such that $A \cap B \subseteq W_\ell$, and
        \item there is an $(A\cap B)$-rooted model of $T$ in $G[A]$.
    \end{enumerate}
\end{lemma}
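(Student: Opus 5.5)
We follow Diestel's proof of the excluded tree minor theorem, which actually yields this stronger statement. Set $t=|V(T)|$ and proceed by induction on $t$, proving a slightly stronger rooted version: for every vertex $r\in V(T)$, the model of $T$ in (b) can be taken with every branch set meeting $A\cap B$ and with $|A\cap B| = t$, so that the separator itself has one vertex in each branch set. The base case $t=1$ is immediate. Since $G$ is finite and $\pw(G)\geq 0$, we take $A=\{v\}$ and $B=V(G)$ for a vertex $v$ such that the separation $(\{v\},V(G))$ is valid. Then $A\cap B=\{v\}$, the single bag $W_1=\{v\}$ works, and $\{v\}$ is an $(A\cap B)$-rooted model of $K_1$.

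For the inductive step, write $T$ as $T'$ plus a leaf $y$ attached to a vertex $x$ of $T'$, so $|V(T')|=t-1$. Since $\pw(G)\geq t-1\geq |V(T')|-1$, the induction hypothesis gives a separation $(A',B')$ with a path decomposition $(W_1,\dots,W_{\ell'})$ of $G[A']$ of width at most $t-2$ and $A'\cap B'\subseteq W_{\ell'}$. It also gives an $(A'\cap B')$-rooted model of $T'$ in $G[A']$ with $|A'\cap B'|=t-1$. Now look at $G[B']$ and grow the separation. Let $u$ be the vertex of $A'\cap B'$ lying in the branch set of $x$.

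We use a standard augmenting step. If $u$ has a neighbour $w\in B'\setminus A'$, we add $w$ to $A'$ and form the new bag $W_{\ell'}\cup\{w\}$, which has size at most $t$. The new separator is $(A'\cap B')\cup\{w\}$, possibly with $u$ removed if $u$ has no remaining neighbours in $B'\setminus A'$. The vertex $w$ becomes the branch set of $y$, giving a model of $T$ rooted at the new separator. If instead $u$ has no neighbour in $B'\setminus A'$, then $u$ can be dropped from the separator. In that case we extend the branch set of $x$ along a path into $B'$, moving the separator forward one vertex at a time with bags $W\cup\{w\}$ followed by $W\setminus\{u\}$, each of size at most $t$. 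This is exactly Diestel's ``shifting'' argument, in which the separator always carries one root per branch set.

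The main obstacle is ensuring that this shifting process terminates with a legitimate model of $T$ rather than running out of room. We control this with the pathwidth hypothesis. If at some point no augmentation is possible, then every vertex of the separator has no unused neighbours in $B\setminus A$. The separator then separates nothing further, and we can finish a path decomposition of all of $G$ of width at most $t-2$ by appending a decomposition of $G[B]$. Choosing the separation to maximize $|A|$ among those satisfying the invariant (with the separator having at most $t-1$ vertices), one must argue that this yields $\pw(G)\leq t-2$, contradicting $\pw(G)\geq t-1$. Verifying this extremal step, namely that a maximal $(A,B)$ which fails to root a copy of $T$ gives a width-$(t-2)$ decomposition of the rest, is the delicate part. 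We would follow the argument of \citet{D95} closely here, and check in particular that $A\cap B\subseteq W_\ell$ is preserved at each step.
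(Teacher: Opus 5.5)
The paper does not prove this lemma; it takes it from Diestel's proof of the path-width theorem. Judged on its own, your proposal has a genuine gap, and it sits exactly at the nontrivial point.

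\textbf{The easy case is fine.} If the root $u$ of $V_x$ has a neighbour $w\in B'\setminus A'$, then $(A'\cup\{w\},B')$ works, with the extra bag $W_{\ell'}\cup\{w\}$ and $V_y=\{w\}$. Note that $u$ must stay in the separator; otherwise $V_x$ is no longer rooted.

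\textbf{The other case is not handled.} Since every one of the $t-1$ branch sets meets $A'\cap B'$, we have $V_x\cap A'\cap B'=\{u\}$. Any path from $V_x$ into $B'\setminus A'$ leaves $A'$ through a vertex of $A'\cap B'$. If $u$ has no neighbour in $B'\setminus A'$, that exit vertex is the root of another branch set. So ``extending the branch set of $x$ along a path into $B'$'' cannot be done without destroying the model. Deleting $u$ from the separator leaves $V_x$ unrooted, and $V_x$ can then never become adjacent to a new branch set. The separation supplied by the induction hypothesis can therefore be a dead end. An induction hypothesis that only asserts that \emph{some} such separation exists for $T-y$ gives you no means of choosing a better one.

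\textbf{The final paragraph defers the resolution to Diestel, and its sketch does not work.} That no augmentation is possible only says the separator vertices have no neighbours in $B\setminus A$. It says nothing about $G[B\setminus A]$, which may carry all of the path-width of $G$. So you cannot ``append a decomposition of $G[B]$'' of width at most $t-2$. Moreover, your shifting step uses bags $W\cup\{w\}$ of size $t$. Even sweeping through all of $G$ this way would only give $\pw(G)\le t-1$, which is no contradiction. With a separator consisting of exactly $t-1$ roots, there is no room to add any vertex while keeping width $t-2$.

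\textbf{What is missing is the extremal argument itself.} You would need a class of configurations with the following features:
\begin{itemize}
\item separations of order at most $t-1$, with a width-$(t-2)$ decomposition of $G[A]$ ending in $A\cap B$;
\item an $(A\cap B)$-rooted model of some subtree of $T$, where the subtree is allowed to change;
\item closure under deleting separator vertices that have no neighbours in $B\setminus A$;
\item vertices are added only when the separator has at most $t-2$ vertices.
\end{itemize}
You would then need a proof that a configuration with $|A|$ maximal either has $A=V(G)$, contradicting $\pw(G)\ge t-1$, or extends to a rooted model of $T$ in one final step. The crux is handling a stuck separator vertex that roots an \emph{internal} branch set, and your proposal does not address it.
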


We will use \Cref{lemma:Diestel_lemma} through the following corollary
which is implicitly used in \citet{DJMM25}.

\begin{lemma}\label{lemma:Diestel_lemma_consequence}
    Let $G$ be a graph and let $T_1, \dots, T_c$ be trees.
    If at least one of $T_1, \dots, T_c$ is a minor of $G$, then there are $i_0 \in [c]$ 
    and a separation $(A,B)$ of $G$ of order at most $|V(T_{i_0})|$ such that
    \begin{enumerate}[label={\normalfont(\alph*)}]
        \item $T_{i_0}$ is a minor of $G[A]$, and
        \item for every $i \in [c]$, $T_i$ is not a minor of $G[A \setminus B]$.
    \end{enumerate}
\end{lemma}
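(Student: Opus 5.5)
We prove \Cref{lemma:Diestel_lemma_consequence} by applying \Cref{lemma:Diestel_lemma} to a minimal subgraph of $G$ that still contains one of the $T_i$ as a minor.

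\textbf{Choosing the graph $G'$.} Among all induced subgraphs of $G$ that contain at least one of $T_1,\dots,T_c$ as a minor, choose $G' = G[U]$ with $|U|$ minimum. This is possible since $G$ itself qualifies. Let $t = \min_i |V(T_i)|$ over the indices $i$ with $T_i$ a minor of $G'$.

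\textbf{Degenerate case.} Suppose some $T_{i_0}$ with $|V(T_{i_0})| = t$ is a minor of $G'$ and $\pw(G') < t-1$. Then the separation $(A,B)=(U,V(G))$ does not work directly, because its order is $|U|$, which may exceed $t$. By minimality, however, $G'$ is small. Indeed, $G'$ is connected, since a tree minor lies in one component. Deleting any vertex of $G'$ kills every $T_i$ minor. A minimal connected graph with a $T_{i_0}$ minor is a subdivision-like object with exactly $|V(T_{i_0})|$ vertices, because a tree minor of a connected graph is a topological minor of a spanning-tree-type subgraph. Contracting any edge of $G'$ would keep the minor but contraction is not deletion, so we instead argue directly: take a model of $T_{i_0}$ in $G'$ with inclusion-minimal branch sets. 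Minimality of $U$ forces $U$ to be the union of the branch sets. I expect to handle this case through \Cref{lemma:Diestel_lemma} anyway, by taking the bound $\pw(G)\ge |V(T)|-1$ with $T = T_{i_0}$ when it holds. When it fails, take $A=U$, $B = (V(G)\setminus U)\cup N_G(U)$, and bound the boundary using minimality.

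\textbf{Main case.} Apply \Cref{lemma:Diestel_lemma} to $G'$ with $T=T_{i_0}$ to get a separation $(A',B')$ of $G'$. The separator $A'\cap B'$ lies in one bag $W_\ell$, so $|A'\cap B'|\le |V(T_{i_0})|$. Moreover, $G'[A']$ has an $(A'\cap B')$-rooted model of $T_{i_0}$, which gives (a). Lift this to a separation of $G$ by setting
\[
A = A', \qquad B = B' \cup (V(G)\setminus U) \cup (A'\cap B').
\]
For this to be a separation of $G$ we need that no vertex of $A'\setminus B'$ has a neighbor outside $U$. To arrange this, choose $U$ more carefully: take $U$ to be a minimal vertex set such that $G[U]$ has a $T_i$ minor and that is a union of components of $G-\partial U$ for small boundary. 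Alternatively, apply \Cref{lemma:Diestel_lemma} to $G$ itself and then iterate: if $G[A\setminus B]$ still contains some $T_i$ minor, recurse into $G[A\setminus B]$, which is strictly smaller. Any separation found inside $G[A\setminus B]$ extends to $G$, because $G[A\setminus B]$ is separated from $B\setminus A$ by $A\cap B$. However, its new separator must then absorb the old neighbors, and this is the main obstacle.

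\textbf{Choosing the recursion.} The cleanest fix I would try first is to run the recursion on $G[A\setminus B]$ while keeping the old separator as part of $B$. The new separator of $G$ is then $(A_2\cap B_2)$ together with old separator vertices adjacent to $A_2\setminus B_2$. To avoid this blow-up, use the fact from \Cref{lemma:Diestel_lemma} that $A\cap B\subseteq W_\ell$, a last bag. I would recurse on the whole graph $G[A]$ minus nothing but with a refined choice. Select, among all separations $(A,B)$ of $G$ of order at most $|V(T_i)|$ such that $G[A]$ has a $T_i$ minor (for some $i$), one with $|A|$ minimum. If $G[A\setminus B]$ had a $T_j$ minor, apply \Cref{lemma:Diestel_lemma} (or the degenerate argument) inside $G[A\setminus B]$ to get a separation $(A_2,B_2)$ of order at most $|V(T_j)|$. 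Then check that $(A_2, B_2\cup B)$ is a separation of $G$ with the same separator, using that vertices of $A\setminus B$ only see $A$. This contradicts minimality since $A_2\subsetneq A$, yielding (b).
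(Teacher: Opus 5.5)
\textbf{There are genuine gaps.} Your final step does not go through. You assert that $(A_2, B_2\cup B)$ is a separation of $G$ ``using that vertices of $A\setminus B$ only see $A$''. But $A$ contains $A\cap B$, and since $A_2\subseteq A\setminus B$ we have $A\cap B\subseteq (B_2\cup B)\setminus A_2$. So any edge from $A_2\setminus B_2$ to the old separator $A\cap B$ violates the separation property. Absorbing those endpoints can push the order up to $|A_2\cap B_2|+|A\cap B|$. This is exactly the obstacle you flagged in the previous paragraph; it is asserted away rather than resolved.

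The extremal choice is also wrong as stated. Take $c=1$ and let $G$ have a component isomorphic to $T_1$. Let $A$ be that component and $B=V(G)\setminus A$. This has order $0$ and the smallest possible $|A|$, yet $G[A\setminus B]=G[A]$ contains $T_1$. Here $A_2=A$, so there is no contradiction.

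Finally, the low-pathwidth case is never handled. A minimal induced subgraph with a $T$-minor need not have $|V(T)|$ vertices (any subdivision of $T$ is minimal). Minimality of $|U|$ also gives no bound on how many vertices of $U$ have neighbors outside $U$.

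\textbf{The missing idea: no recursion is needed.} Item (a) of \Cref{lemma:Diestel_lemma} gives you a path decomposition of $G[A_0]$, and you should cut it at a prefix. Choose $i_1$ with $|V(T_{i_1})|$ minimum among the $T_i$ that are minors of $G$.
\begin{itemize}
\item If $\pw(G)\le |V(T_{i_1})|-2$, take a minimum-width path decomposition $(W_1,\dots,W_\ell)$ of $G$.
\item Otherwise, apply \Cref{lemma:Diestel_lemma} with $T=T_{i_1}$ to get $(A_0,B_0)$ and $(W_1,\dots,W_\ell)$.
\end{itemize}
In either case, let $j$ be minimum such that $G[W_1\cup\dots\cup W_j]$ has some $T_i$ as a minor; it exists because $G[A_0]$ has a $T_{i_1}$-minor. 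Set $A=W_1\cup\dots\cup W_j$ and $B=W_j\cup\dots\cup W_\ell$, adding $B_0$ to $B$ in the second case. The condition $A_0\cap B_0\subseteq W_\ell$ together with the interval property ensures that $A\cap B=W_j$.

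This gives order at most $|V(T_{i_1})|\le|V(T_{i_0})|$ for whichever $T_{i_0}$ is a minor of $G[A]$. That inequality is why $i_1$ is chosen minimal: the tree appearing in $A$ may differ from the one you applied Diestel's lemma to. Property (b) is immediate, since $A\setminus B\subseteq W_1\cup\dots\cup W_{j-1}$ and $j$ is minimal.
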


\begin{proof}
    Let $i_1 \in [c]$ be such that $T_{i_1}$ is a minor of $G$ and 
    $|V(T_{i_1})|$ is minimum with this property.
    If $\pw(G) \leq |V(T_{i_1})|-2$, then let
    $(W_1, \dots, W_\ell)$ be a path decomposition of $G$ of minimum width.
    Then, we take $(A,B) = (W_1 \cup \dots \cup W_j, W_j \cup \dots \cup W_{\ell})$
    where $j \in [\ell]$ is minimum such that $G[W_1 \cup \dots \cup W_j]$
    contains one of $T_1, \dots, T_c$ as a minor.

    If $\pw(G) \geq |V(T_{i_1})|-1$,
    then let $(A_0,B_0)$ and $(W_1, \dots, W_\ell)$ be respectively 
    the separation and the path decomposition given by \Cref{lemma:Diestel_lemma}.
    Again, we take $(A,B) = (W_1 \cup \dots \cup W_j, W_j \cup \dots \cup W_{\ell})$ 
    where $j \in [\ell]$ is minimum such that $G[W_1 \cup \dots \cup W_j]$ 
    contains one of $T_1, \dots, T_c$ as a minor.

    In both cases, $G[A]$ contains at least one of $T_1, \dots, T_c$ as a minor,
    say $T_{i_0}$,
    and $(A,B)$ has order at most $|V(T_{i_1})| \leq |V(T_{i_0})|$.
    Moreover, the minimality of $j$ ensures in both cases that, for every $i \in [c]$,
    $T_i$ is not a minor of $G[A \setminus B]$.
\end{proof}

\begin{lemma}\label{lemma:bounded_tw_implies_EP}
    Let $G$ be a graph and let $S \subseteq V(G)$ be such that $\tw(G,S) < w$.
    Let $T_1, \dots, T_c$ be trees,
    let $R_1 \subseteq V(T_1), \dots, R_c \subseteq V(T_c)$,
    and let $x_1, \dots ,x_c$ be nonnegative integers. 
    If there is no family $(G_{i,y})_{i \in [c], y \in [x_i]}$ of pairwise vertex-disjoint
    subgraphs of $G$ such that $(G_{i,y},S)$ contains a model of $(T_i,R_i)$
    for every $i \in [c]$ and $y \in [x_i]$, then
    there are $j \in [c]$ and a set $X \subseteq V(G)$ with 
    \[
        |X| \leq \sum_{i \in [c]} (|V(T_i)| + w) x_i
    \]
    such that $x_j\geq 1$ and there is no model of $(T_j,R_j)$ in $(G-X,S)$.
\end{lemma}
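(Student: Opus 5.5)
We argue by induction on $\sum_{i\in[c]} x_i$, working inside a fixed tree decomposition $\mathcal D=(T,(W_z\mid z\in V(T)))$ of $(G,S)$ of width less than $w$, so every bag has at most $w$ vertices. If $\sum_i x_i=0$, the hypothesis fails: the empty family is a valid family. Hence we may assume some $x_j\geq 1$. If some $j$ with $x_j\geq1$ admits no model of $(T_j,R_j)$ in $(G,S)$ at all, we take $X=\emptyset$ and are done.

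Otherwise, root $T$ at some vertex $r$. For a node $z$, let $G_z$ be the subgraph of $G$ induced by the union of the bags of the subtree $T_z$ rooted at $z$, together with every component $C$ of $G-\bigcup_{z'}W_{z'}$ whose neighbourhood lies in a bag of $T_z$; fix one such bag for each component. Choose a node $z$ of maximum depth such that $(G_z,S)$ contains a model of some $(T_i,R_i)$ with $x_i\geq1$, and call that index $i_0$.

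The key point is the separation. Let $p$ be the parent of $z$ (if $z=r$, use $W_z$ alone). Then $W_z\cap W_p$, or simply $W_z$, separates $G_z-W_z$ from the rest of $G$. This uses property (iii) of tree decompositions of $(G,S)$: the extra components are attached to a single bag. By the depth-maximality of $z$, each child $z'$ of $z$ has $G_{z'}$ containing no model of any $(T_i,R_i)$ with $x_i\geq1$. We cannot cut inside $W_z$ cheaply, so we simply delete $W_z$, which costs at most $w$ vertices. Then:
\begin{enumerate}
\item Every model of a relevant $(T_i,R_i)$ in $(G-W_z,S)$ lies entirely inside one component of $G-W_z$, since trees are connected.
\item The components of $G-W_z$ meeting $G_z$ are contained in the various $G_{z'}$ for children $z'$, or in attached components; attached components either lie within some $G_{z'}$ or hang off $W_z$ itself. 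The latter sit inside $G_z$ but are disjoint from $G_{z'}$.
\end{enumerate}
This is the delicate part, so we refine the choice of $z$ as follows. Pick the model $\mathcal M$ of $(T_{i_0},R_{i_0})$ in $G_z$. If $\mathcal M$ avoids $W_z$, it lies in a child part or in an attached component hanging off $W_z$. In the child case we contradict the choice of $z$. In the attached-component case, such a component $C$ together with its neighbourhood behaves like a leaf, and we handle it by deleting $N(C)\subseteq W_z$. So in every case there is a set $Y$ with $|Y|\leq w$ that separates a part $G^\ast$ from the rest of $G$, such that $G^\ast$ contains a model of $(T_{i_0},R_{i_0})$ and $G^\ast - Y$ contains no model of any relevant $(T_i,R_i)$.

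Now decrease $x_{i_0}$ by one and apply induction to $G' = G - V(G^\ast)$, or more precisely to $G-(V(G^\ast)\cup Y)$, with the same $w$; treewidth of $(G',S)$ does not increase under deletion. If $G'$ contained the decreased family, then adding the model inside $G^\ast$ would give the forbidden full family in $G$. So induction yields $j$ with $x_j'\geq1$ and a set $X'$ of size at most $\sum_i (|V(T_i)|+w)x_i' $ hitting every model of $(T_j,R_j)$ in $G'$.

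Set $X=X'\cup Y$. Any model of $(T_j,R_j)$ in $G-X$ is connected, so it lies either in $G^\ast-Y$, where none exists, or in $G'$, where $X'$ hits it. The bound holds since $|Y|\leq w\leq |V(T_{i_0})|+w$, and $x_j\geq x_j'\geq 1$.

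The main obstacle is the careful choice of the lowest node together with the handling of the extra components from property (iii); that step needs the most care. The slack $|V(T_i)|$ in the bound suggests the authors instead cut at a bag along a path-like order. This is an alternative if the detached-components bookkeeping fails.
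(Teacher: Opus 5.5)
There is a genuine gap, and it is exactly at the step you call delicate. You assert that in every case some $Y$ with $|Y|\le w$ cuts off a part $G^\ast$ containing a model of $(T_{i_0},R_{i_0})$, with $G^\ast-Y$ containing no model of any relevant $(T_i,R_i)$. This is fine for indices with $R_i\neq\emptyset$. Such a model meets $S\subseteq\bigcup_x W_x$, so a model inside $G_z-W_z$ would lie in $G_{z'}$ for a child $z'$, contradicting the depth-maximality of $z$; hence $G^\ast=G_z$, $Y=W_z$ works.

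For $R_i=\emptyset$, however, models of $T_i$ may live entirely inside a component $C$ of $G-\bigcup_x W_x$ with $N(C)\subseteq W_z$. Such components contain no vertex of $S$, are not controlled by the decomposition, and can have arbitrarily large treewidth. Deleting $N(C)$ leaves $C$ and all models inside it intact, so treating $C\cup N(C)$ as a ``leaf'' achieves nothing. The same problem hits $G_z-W_z$ when $\mathcal M$ meets $W_z$.

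Indeed, if your argument were correct it would give $|X|\le w\sum_i x_i$, which is false. Take $G=K_{2k-1}$ with $k\ge 3$, $S=\emptyset$, the one-bag decomposition $(\{v\})$ (so $w=1$ is allowed), $T_1=K_2$, $R_1=\emptyset$, $x_1=k$. There are no $k$ disjoint edges, yet hitting all edges needs $2k-2>k$ vertices.

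The missing idea is what the term $|V(T_i)|$ in the bound pays for. It is not a path-like ordering of bags but Diestel's lemma, applied inside $G_z$. The paper sets $I=\{i \mid R_i=\emptyset \text{ and } T_i \text{ is a minor of } G_z\}$. If $I\neq\emptyset$, it applies \Cref{lemma:Diestel_lemma_consequence} to $G_z$ and $(T_i)_{i\in I}$. This gives $i_0\in I$ and a separation $(A_0,B_0)$ of $G_z$ of order at most $|V(T_{i_0})|$, such that $T_{i_0}$ is a minor of $G_z[A_0]$ and no $T_i$ with $i\in I$ is a minor of $G_z[A_0\setminus B_0]$. It then takes $(A,B)=(A_0,\,B_0\cup(V(G)\setminus V(G_z))\cup W_z)$, of order at most $|V(T_{i_0})|+w$. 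The rooted indices are excluded from $A\setminus B$ by your depth argument, and the other unrooted indices because they are not minors of $G_z$ at all. If $I=\emptyset$, your choice $Y=W_z$, $G^\ast=G_z$ is precisely what the paper does. The rest of your scheme matches the paper's induction: recurse on the other side with $x_{i_0}$ decreased, and add the separator to $X'$.
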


\begin{proof}
    We proceed by induction on $c + \sum_{i \in [c]} x_i$.
    If $c=0$, then the result is vacuously true.
    Now assume $c \geq 1$.
    If there is $i \in [c]$ with $x_i=0$, then
    we remove $(T_i,R_i)$ from the list $(T_1,R_1), \dots, (T_c,R_c)$, and call the induction hypothesis.
    Now assume $x_i \geq 1$ for every $i \in [c]$.

    If there is $i \in [c]$ such that $(T_i,R_i)$ is not a minor of $(G,S)$,
    then the result holds for $Z = \emptyset$.
    Now suppose that all of $(T_1, R_1) ,\dots, (T_c, R_c)$ are minors of $(G,S)$.

    \begin{claim}\label{claim:bd_tw}
        There are $i_0 \in [c]$ and a separation $(A,B)$ of $G$ such that
        \begin{enumerate}[label={\normalfont(\alph*)}]
            \item $|A \cap B| \leq |V(T_{i_0})| + w$, \label{item:claim:bd_tw:order}
            \item there is a model of $(T_{i_0}, R_{i_0})$ in $(G[A], S)$, and  \label{item:claim:bd_tw:there_is_a_model}
            \item for every $i \in [c]$, there is no model of $(T_i, R_i)$ in $(G[A \setminus B], S)$.  \label{item:claim:bd_tw:there_is_no_model}
        \end{enumerate}
    \end{claim}

    \begin{proofclaim}
        Fix a tree decomposition $\big(F, (W_x \mid x \in V(F))\big)$ of $(G,S)$ of width less than $w$.
        We root $F$ at an arbitrary vertex $s$.
        For every $y \in V(F)$, let $F_y$ be the subtree of $F$ rooted at $y$,
        and let $G_y$ be the subgraph of $G$ induced by
        the union of $\bigcup_{x \in V(F_y)} W_x$ with the vertex set of all the connected components $C$
        of $G-\bigcup_{x \in V(F)} W_x$ with $N_G(V(C)) \subseteq \bigcup_{x \in V(F_y)} W_x$.
    
        Let $z$ be a vertex at maximum distance from $s$ in $F$ such that $(G_z,S)$ contains 
        a model of $(T_i,R_i)$ for some $i \in [c]$,
        and let $I = \{i \in [c] \mid R_i = \emptyset \text{ and } T_i \text{ minor of } G_z\}$.
        
        First suppose that $I \neq \emptyset$.
        Then, by \Cref{lemma:Diestel_lemma_consequence} applied to $G_z$ 
        and the family $(T_i)_{i \in I}$, 
        there are $i_0 \in I$ and
        a separation $(A_0,B_0)$ of $G_z$ of order at most $|V(T_{i_0})|$
        such that
        \begin{enumerate}[label={\ref{lemma:Diestel_lemma_consequence}.(\alph*)}]
            \item $T_{i_0}$ is a minor of $G_z[A_0]$, and \label{item:proof_of_claim_bd_tw:Ti0_minor}
            \item for every $i \in I$, $T_i$ is not a minor of $G_z[A_0 \setminus B_0]$. \label{item:proof_of_claim_bd_tw:Ti_not_minor}
        \end{enumerate}
        Let $(A,B) = (A_0, B_0 \cup (V(G) \setminus V(G_z)) \cup W_z)$.
        Note that $(A,B)$ is a separation of $G$,
        and $|A \cap B| \leq |A_0 \cap B_0| + |W_z| \leq |V(T_{i_0})|+w$,
        which proves \ref{item:claim:bd_tw:order}.
        By construction, $G[A] = G_z[A_0]$, and so there is a model of $(T_{i_0}, \emptyset)$ in $(G[A], S)$ by \ref{item:proof_of_claim_bd_tw:Ti0_minor}, which proves \ref{item:claim:bd_tw:there_is_a_model}.
        Moreover,
        for every $i \in [c] \setminus I$,
        since every model of $(T_i,R_i)$ in $(G,S)$ intersects $\bigcup_{x \in V(F)} W_x$, 
        by the definition of $z$, $(T_i,R_i)$ is not a minor of $(G_z - W_z, S)$, 
        and so $(T_i,R_i)$ is not a minor of $(G[A \setminus B], S)$.
        Combined with \ref{item:proof_of_claim_bd_tw:Ti_not_minor}, 
        this proves \ref{item:claim:bd_tw:there_is_no_model}.
        
        Now suppose $I = \emptyset$.
        Let $(A,B) = (V(G_z), (V(G) \setminus V(G_z)) \cup W_z)$,
        and let $i_0 \in [c]$ be such that $(T_{i_0}, R_{i_0})$ is a minor of $(G[A], S \cap A)$.
        Clearly, $(A,B)$ is a separation of $G$ of order at most $w$, and so \ref{item:claim:bd_tw:order} holds.
        Moreover, there is a model of $(T_{i_0}, R_{i_0})$ in $(G[A], S)$, and so \ref{item:claim:bd_tw:there_is_a_model} holds.
        Finally, by the definition of $z$, there is no model of $(T_i,R_i)$ in $(G[A \setminus B], S)$, and so \ref{item:claim:bd_tw:there_is_no_model} holds.
        In both cases, we found a separation $(A,B)$ of $G$ and $i_0 \in [c]$ as wanted.
    \end{proofclaim}

    Fix $i_0 \in [c]$ and $(A,B)$ as in \Cref{claim:bd_tw}.
    Let $G' = G[B \setminus A]$,
    and for every $i \in [c]$, let
    \[
        x'_i = 
        \begin{cases}
            x_{i_0}-1 & \textrm{if $i=i_0$,} \\
            x_i & \textrm{otherwise.}
        \end{cases}
    \]
    Observe that $G'$ does not admit a family
    $(G'_{i,y})_{i \in [c], y \in [x'_i]}$ of pairwise vertex-disjoint subgraphs of $G'$ such that $(G'_{i,y},S)$ contains a model of $(T_i,R_i)$
    for every $i \in [c]$ and $y \in [x'_i]$, as otherwise,
    the family $(G_{i,y})_{i \in [c], y \in [x_i]}$ defined by
    \[
        G_{i,y} =
        \begin{cases}
            G[A] & \textrm{if $(i,y)=(i_0,x_{i_0})$,} \\
            G'_{i,y} & \textrm{otherwise,}
        \end{cases}
    \]
    for $i \in [c]$ and $y \in [x_i]$,
    would contradict the assumption of the lemma.

    Hence, by the induction hypothesis,
    there are $j \in [c]$ and a set $X' \subseteq V(G')$ of size at most
    $\sum_{i \in [c]} (|V(T_i)| + w) x'_i = \left(\sum_{i \in [c]}(|V(T_i)| + w) x_i\right) - (|V(T_{i_0})|+w)$,
    such that $x'_j \geq 1$ and there is no model of $(T_j,R_j)$ in $(G'-X', (S \cap V(G')) \setminus X)$.
    Then, we set
    \[
        X = X' \cup (A \cap B).
    \]
    First observe that $|X| \leq \sum_{i \in [c]} (|V(T_i)| + w) x_i$.
    It remains to show that $(T_j,R_j)$ is not a minor of $(G-X,S)$.
    Because $(A,B)$ is a separation of $G$ and because $T_j$ is connected,
    every model of $(T_j,R_j)$ in $(G-(A \cap B), S)$ is either included in $A \setminus B$ or in $B \setminus A$.
    Since $X'$ intersects every model of $(T_j,R_j)$ of $(G,S)$ included in $V(G') = B \setminus A$, 
    and since there is no model of $(T_j,R_j)$ in $(G,S)$ included in $A \setminus B$ by \Cref{claim:bd_tw},
    we deduce that $X = X' \cup (A \cup B)$ meets every model of $(T_j,R_j)$ in $(G,S)$.
    This concludes the proof of the lemma.
\end{proof}

We can now prove \Cref{thm:main_arbitrarily_rooted_forest}.

\begin{proof}[Proof of \Cref{thm:main_arbitrarily_rooted_forest}]
    We set, for every $t \in \NN$,
    \[
        g'(t) = g(t) + 2t^2.
    \]
    Let $F$ be a forest on $t$ vertices, and let $R \subseteq V(F)$.
    Let $T$ be a $t$-vertex tree such that $F$ is a subgraph of $T$.
    Let $G$ be a graph and $S \subseteq V(G)$,
    and let $k$ be a positive integer
    such that there are no $k$ pairwise vertex-disjoint models of $(F,R)$ in $(G,S)$.
    In particular, there are no $k$ pairwise vertex-disjoint models of $(T,V(T))$ in $(G,S)$.
    Hence, by \Cref{thm:main},
    there is a set $X_0 \subseteq V(G)$ of size 
    at most $g(t)k$ intersecting every model of $(T,V(T))$ in $(G,S)$.
    In other words, there is no $S$-rooted model of $T$ in $G-X_0$.
    By \Cref{diestelSrootedoriginal}, 
    $\tw(G-X_0, S) \leq \pw(G-X_0, S) < 2|V(T)|-1$.
    Then, by \Cref{lemma:bounded_tw_implies_EP}
    applied to $w=2|V(T)|-1$, $T_1, \dots, T_c$ being the connected components of $F$, and
    $x_i=k$ for every $i \in [c]$,
    there is a set $X_1 \subseteq V(G-Z_0)$ with
    \[
        |X_1| \leq \textstyle\sum_{i \in [c]}(|V(T_i)|+w) \cdot k
        = (t + wc) \cdot k 
        \leq 2 t^2 \cdot k
    \]
    that intersects every model of $(F,R)$ in $(G-X_0,S)$.
    The result then follows for $X = X_0 \cup X_1$.
\end{proof}

\section{Better bounds for paths and  stars}
\label{sec:paths_and_stars}

In this section, we establish better bounds for the function $g$ in \cref{thm:main} in the case of some specific trees $T$, namely when $T$ is  a path or a star. 
For $\ell \geq 1$, we denote by $P_\ell$ the path on $\ell$ vertices, and by $K_{1,\ell}$ the star with $\ell$ branches. 

\subsection{Paths}

We show that \cref{thm:main} holds with a function $g$ that is quadratic when the tree $T$ under consideration is a path. 

\begin{theorem}
    Let $G$ be a graph, let $S \subseteq V(G)$,
    and let $k,\ell$ be positive integers with $\ell \geq 2$.
    If there are no $k$ pairwise vertex-disjoint $S$-rooted models of $P_\ell$ in $G$,
    then there is a set $X \subseteq V(G)$ with $|X| \leq (\ell^2-1)(k-1)$
    such that there is no $S$-rooted model of $P_\ell$ in $G-X$. 
\end{theorem}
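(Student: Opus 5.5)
Our plan is induction on $k$, with $\ell$ fixed. The case $k=1$ is trivial with $X=\emptyset$. For $k\geq 2$, if $G$ has no $S$-rooted model of $P_\ell$ we again take $X=\emptyset$. Otherwise we want to find a separation that cuts off a small model cheaply. Concretely, we look for a set $Y$ of at most $\ell^2-1$ vertices and a vertex set $A$ with the following three properties:
\begin{enumerate}
    \item $G[A]$ contains an $S$-rooted model of $P_\ell$;
    \item $N_G(A\setminus Y)\subseteq Y$;
    \item $G[A\setminus Y]$ contains no $S$-rooted model of $P_\ell$.
\end{enumerate}
Given such $A$ and $Y$, we apply induction to $G'=G-A$ with $k-1$. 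Indeed, $k-1$ disjoint models in $G'$ together with the model in $G[A]$ would give $k$ disjoint models. Induction yields $X'$ with $|X'|\leq(\ell^2-1)(k-2)$, and we set $X=X'\cup Y$. Now take any $S$-rooted model of $P_\ell$ in $G-X$. Its union of branch sets is connected, so by (ii) it lies either inside $A\setminus Y$ or inside $G-A$. The first is excluded by (iii) and the second by the choice of $X'$. This is exactly the scheme of \citet{DJMM25}, transplanted to the rooted setting.

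To find $A$ and $Y$ we exploit the special structure of paths. An $S$-rooted model of $P_\ell$ amounts to a path $Q$ in $G$ with $\ell$ marked vertices of $S$, cut into $\ell$ consecutive segments each containing a vertex of $S$. First, choose an $S$-rooted $P_\ell$-model that is minimal in a suitable sense. We take a path $Q$ meeting at least $\ell$ vertices of $S$ whose length is minimal, or whose number of $S$-vertices is exactly $\ell$ with both endpoints in $S$. Next, grow a region around $Q$ in a BFS-like fashion, adding components of $G-V(Q)$ and tracking how many $S$-rooted path pieces can still be routed. The aim is to define $A$ as the union of $Q$ with the components of $G-Y$ attached to it, where $Y$ consists of roughly $\ell$ vertices for each of the $\ell$ segments. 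For each segment, a Menger-type argument should show that if more than about $\ell$ disjoint paths leave the segment towards $S$, then a shorter or more efficient model can be rerouted, contradicting minimality. That would give a segment separator of size $\leq \ell+1$ and a total of $\ell(\ell+1)-\ell-1$, i.e.\ about $\ell^2-1$. An alternative I would also try is to replace $P_\ell$ by the family of "partial paths" and use \Cref{lemma:Diestel_lemma_consequence}-style minimality in a rooted analogue. This would use \Cref{diestelSrootedoriginal}: if $\pw(G,S)\leq 2\ell-2$, take a rooted path decomposition, the minimal prefix $W_1\cup\dots\cup W_j$ containing a rooted $P_\ell$ model, and the separation at $W_j$ of size $\leq 2\ell-1$. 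That handles the bounded-width case with a separator of size $2\ell-1\leq \ell^2-1$. Components outside the decomposition need care via property (iii) of decompositions of $(G,S)$.

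The main obstacle is the case where $\pw(G,S)$ is large, where no rooted version of \Cref{lemma:Diestel_lemma} is available in the paper. There I would first try to prove directly the rooted analogue of \Cref{lemma:Diestel_lemma} for paths only. The statement would be: there is a separation $(A,B)$ such that $G[A]$ has an $(A\cap B)$-rooted $P_\ell$-model, and $(G[A],S)$ has a path decomposition of width $O(\ell)$ with $A\cap B$ in the last bag. I would attempt this by running Diestel's greedy "growing the path decomposition" argument while only counting $S$-vertices. The budget $\ell^2-1$ suggests allowing bags of size up to about $\ell^2$ rather than $2\ell$, which leaves slack if the rooted greedy argument only guarantees quadratic width. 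Combining this with the prefix-minimality trick above then gives $(A,Y)$ with $|Y|\leq\ell^2-1$, and the induction closes.
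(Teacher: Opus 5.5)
Your reduction is sound: if sets $A$ and $Y$ with $|Y|\leq \ell^2-1$ satisfying (i)--(iii) always existed whenever $G$ has a model, the induction on $k$ would close. The gap is that existence, which you flag as the main obstacle and do not prove. It is in fact false in the rooted setting, so neither of your two routes to $(A,Y)$ can succeed.

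Here is a counterexample. Let $s=\ell^2+\ell$. Build $G$ from the cycle on $s\ell^2$ vertices by replacing each vertex by a clique of size $m\geq \ell^2$, and making the cliques of consecutive cycle vertices complete to each other. Let $S$ consist of one vertex in every $\ell^2$-th clique. Then every two vertices of $S$ are at distance at least $\ell^2$. Suppose $|Y|\leq \ell^2-1<m$. Every clique meets $G-Y$, so $G-Y$ is connected. Condition (ii) says $C=A\setminus Y$ is a union of components of $G-Y$, so $C=\emptyset$ or $C=V(G)\setminus Y$.
\begin{enumerate}
\item If $C=\emptyset$, then $A\subseteq Y$ has fewer than $\ell^2$ vertices. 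A model of $P_\ell$ needs a connected subgraph containing two vertices of $S$, hence at least $\ell^2+1$ vertices, so (i) fails.
\item If $C=V(G)\setminus Y$, then $G[A\setminus Y]=G-Y$. This graph contains a path that runs through the cliques in cyclic order and passes through all of the at least $\ell+1$ vertices of $S\setminus Y$, so (iii) fails.
\end{enumerate}
For large $k$ this graph has no $k$ disjoint models, so your induction step must handle it, and it cannot. Scaling $m$ and the spacing of $S$ defeats any separator bound depending only on $\ell$. The underlying reason is that rooted models can be forced to be large inside highly connected graphs, so no small separation cuts off a single model. This is also why the paper says the Diestel-lemma approach of \citet{DJMM25} did not extend. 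Your bounded-$\pw(G,S)$ case is fine, but this example has large $\pw(G,S)$, and no rooted greedy analogue of \Cref{lemma:Diestel_lemma} can produce a separator that does not exist.

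The paper avoids cutting off models one at a time. It takes a maximum packing $\mathcal{H}$, so $|\mathcal{H}|\leq k-1$, of inclusion-minimal subgraphs containing a model. Each such subgraph is a path $Q_H$ with both ends in $S$, plus $\ell-2$ disjoint $V(Q_H)$--$S$ legs $L_{H,j}$. The at most $2\ell-2$ endpoints of spine and legs go into $Z_0$. A rerouting argument shows that at most $\ell-1$ disjoint $(V(\bigcup\mathcal{H})\setminus Z_0)$--$S$ paths in $G-Z_0$ end on each spine and on each leg; otherwise $H$ together with these paths contains two disjoint models, contradicting the maximality of $|\mathcal{H}|$. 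Menger's theorem then gives a set $Z_1$ with $|Z_1|\leq(\ell-1)^2(k-1)$. A model in $G-(Z_0\cup Z_1)$ would meet some $H$, by maximality, and contain a vertex of $S$, which yields a path that $Z_1$ should have hit. Your idea of bounding the number of paths leaving a segment towards $S$ is close to this claim. However, the excess paths must be charged to the maximality of a packing, not to the minimality of a single model, and the resulting hitting set is global rather than a separator around one model.
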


\begin{proof}
    Let $\mathcal{H}$ be a maximum-size family of pairwise vertex-disjoint subgraphs of $G$ 
    such that, for every $H \in \mathcal{H}$,
    $H$ contains an $S$-rooted model of $P_\ell$, and 
    suppose that $|\mathcal{H}| \leq k-1$. 
    By taking every member $H$ of $\mathcal{H}$ with $V(H),E(H)$ inclusion-wise minimal,
    we have that $H$ is the union of a path $Q_H$ with both endpoints in $S$,
    with $\ell-2$ vertex-disjoint $V(Q_H)$--$S$ paths $L_{H,1}, \dots, L_{H,\ell-2}$ that avoid the endpoints of $Q_H$.  
    Note that these paths may consist of a single vertex, which is then an interior vertex of $Q_H$ that is in $S$. 
    Let $Z_0$ be the set of all the vertices in $V(G)$ 
    that are endpoints of $Q_H$ for some $H \in \mathcal{H}$,
    or endpoints of $L_{H,j}$ for some $H \in \mathcal{H}, j \in [\ell-2]$.
    Note that
    \[
        |Z_0| \leq (\ell+(\ell-2))|\mathcal{H}| \leq (2\ell-2)(k-1).
    \]
    Let $\mathcal{P}$ be a maximum-size collection of pairwise vertex-disjoint $(V(\bigcup \mathcal{H})\setminus Z_0)$--$S$ paths in $G-Z_0$. 
   
    \begin{claim}\label{claim:Pl}
        For every $H \in \mathcal{H}$,
        \begin{enumerate}[label={\normalfont(\alph*)}]
            \item at most $\ell-1$ paths in $\mathcal{P}$ have an endpoint in 
                $Q_H$, and \label{item:claim:Pl:spine}
            \item for every $j \in [\ell-2]$, at most $\ell-1$ paths in $\mathcal{P}$ have an endpoint in $L_{H,j}$. \label{item:claim:Pl:legs}
        \end{enumerate}
    \end{claim}

    \begin{figure}
        \centering
        \includegraphics{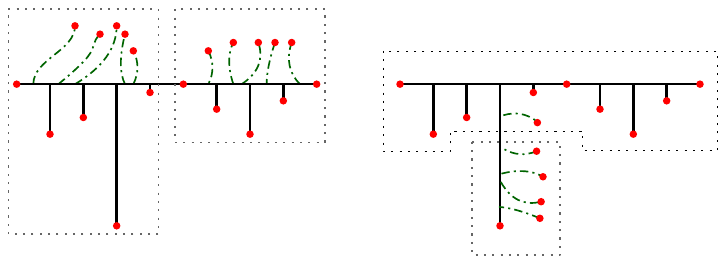}
        \caption{Illustration of \Cref{claim:Pl} for $\ell=10$.
            Case~\ref{item:claim:Pl:spine} is depicted on the left and Case~\ref{item:claim:Pl:legs} on the right.
            The graph $H \in \mathcal{H}$ is in solid black, the vertices of $S$ are red, and the paths in $\mathcal{P}$ are dashed green. In both cases, we find two vertex-disjoint $S$-rooted models of $P_\ell$, which contradicts the maximality of $|\mathcal{H}|$.}
        \label{fig:claim_Pl}
    \end{figure}

    \begin{proofclaim}
        See \Cref{fig:claim_Pl} for an illustration of this proof.
        Let $H \in \mathcal{H}$.
        We first prove \ref{item:claim:Pl:spine}.
        Suppose for contradiction that there are $\ell$ paths in $\mathcal{P}$
        with an endpoint in $Q_H$.
        Since these paths are vertex-disjoint from $Z_0$,
        we deduce that $H$ together with these paths contain an $S$-rooted model
        of $P_{2\ell}$.
        In particular, there are two vertex-disjoint $S$-rooted models of $P_\ell$ disjoint
        from $\mathcal{H} \setminus \{H\}$ (delimited by the dotted boxes in the figure), which contradicts the maximality of 
        $|\mathcal{H}|$.

        We now prove \ref{item:claim:Pl:legs}.
        Suppose for contradiction that there is a $j\in [\ell-2]$
        such that $L_{H,j}$ contains $\ell$ endpoints of paths in $\mathcal{P}$.
        Then again, the union of $H$ with these paths contains two vertex-disjoint 
        $S$-rooted models of $P_\ell$,
        which contradicts the maximality of $|\mathcal{H}|$.
    \end{proofclaim}

    It follows from \Cref{claim:Pl} that
    \[
        |\mathcal{P}| \leq \big((\ell-1) + (\ell-2)(\ell-1)\big)(k-1) = (\ell-1)^2(k-1).
    \]
    By Menger's Theorem, there is a set $Z_1$ of at most $(\ell-1)^2(k-1)$ vertices
    that intersects every
    $(V(\bigcup \mathcal{H})\setminus Z_0)$--$S$ paths in $G-Z_0$.
    Finally, we take
    \[
        X = Z_0 \cup Z_1.
    \]
    First, note that
    \[
        |X| \leq |Z_0| + |Z_1| \leq \big(2\ell-2 + (\ell-1)^2\big)(k-1) = (\ell^2 -1)(k-1),
    \]
    so it only remain to show that $G-X$ has no $S$-rooted model of $P_\ell$.
    Suppose for contradiction that such a model $\mathcal{M}$ exists.
    Then, by maximality of $|\mathcal{H}|$, $\mathcal{M}$ intersects some $H \in \mathcal{H}$.
    Moreover, $\mathcal{M}$ contains a vertex in $S$, and so we deduce that there is
    a path from $V(H) \setminus X$ to $S \setminus X$ in $G-X$,
    which contradicts the definition of $Z_1 \subseteq X$.
    This proves that $G-X$ has no $S$-rooted model of $P_\ell$
    and completes the proof.
\end{proof}

\subsection{Stars}
We show that \cref{thm:main} holds with a function $g$ that is linear when the tree $T$ under consideration is a star. 
To do so, we first need to introduce a couple of definitions and a lemma.

Let $G$ be a graph, and let $S\subseteq V(G)$.  
A tree $T$ that is a subgraph of $G$ is said to be \defin{$S$-good} if $T$ is subcubic, and every leaf of $T$ is in $S$.

\begin{lemma} \label{lem:splitting_a_tree}
  Let $G$ be a graph, let $S\subset V(G)$ and let $\ell$ be an integer with $\ell\geq 2$. Let $T$ be an $S$-good subtree of $G$ with at least $3\ell +1$ leaves. Then, there exist two vertex-disjoint $S$-good subtrees $T_1, T_2$ of $T$ such that $|L(T_1)|+|L(T_2)|\geq |L(T)|$ and $|L(T_1)|,|L(T_2)| \geq \ell+1$. 
\end{lemma}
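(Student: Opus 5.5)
The plan is a single edge cut. Fix an $S$-good tree $T$ with $n=|L(T)|\geq 3\ell+1$ leaves. For an edge $e=uv$ of $T$, the forest $T-e$ has two components $T_u\ni u$ and $T_v\ni v$. Write $a(e)$ for the number of leaves of $T$ in $T_u$, so that $T_v$ contains $n-a(e)$ of them. We will choose $e$ so that both $a(e)$ and $n-a(e)$ are at least $\ell+1$. We then prune $T_u$ and $T_v$ to $S$-good trees without losing any leaf of $T$.

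To find $e$, root $T$ at a leaf $r$; note that $r\in S$. For each vertex $x$, let $\lambda(x)$ be the number of leaves of $T$ in the subtree below $x$. Since $T$ is subcubic and rooted at a leaf, every non-root vertex has at most two children, so $\lambda(x)\leq \lambda(c_1)+\lambda(c_2)$ when $x$ is internal, where $c_1,c_2$ are its children. Start at the child of $r$, which has $\lambda=n-1\geq 3\ell$. Repeatedly move to the child with larger $\lambda$ as long as $\lambda\geq 2\ell+1$. This halts at a vertex $x$ with $\lambda(x)\geq \ell+1$ and $\lambda(x)\leq 2\ell$, or at a vertex $x$ with $\lambda(x)\geq 2\ell+1$ whose larger child has $\lambda\leq 2\ell$; in the latter case that child has $\lambda\geq \lceil(2\ell+1)/2\rceil=\ell+1$. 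Either way we obtain a non-root vertex $y$ with $\ell+1\leq \lambda(y)\leq 2\ell$. Let $e$ be the edge from $y$ to its parent. The subtree below $y$ contains $\lambda(y)\geq \ell+1$ leaves of $T$, and the other side contains $n-\lambda(y)\geq 3\ell+1-2\ell=\ell+1$ leaves, counting $r$.

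The two components are disjoint, subcubic, and contain all $n$ leaves of $T$. Their leaves, however, are the leaves of $T$ plus possibly the endpoints $u,v$ of $e$, which may have degree $1$ in $T-e$ and may not lie in $S$. To repair $T_u$, if $u$ has degree $1$ in $T_u$ and $u\notin S$, delete $u$ and repeat along the resulting bare path until reaching a vertex of degree $\geq 2$ in the remaining tree, or a vertex of $S$. Such a vertex exists because $T_u$ contains at least $\ell+1\geq 3$ leaves of $T$. This pruning deletes only vertices of degree $2$ in $T$ (besides $u$), so no leaf of $T$ is lost. If $u$ has degree $0$ in $T_u$, then $T_u=\{u\}$, which contradicts its having $\ell+1\geq 3$ leaves. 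We apply the same repair to $T_v$. The resulting trees $T_1,T_2$ are subcubic, all their leaves lie in $S$, and they have leaf counts $\geq \lambda(y)$ and $\geq n-\lambda(y)$ respectively. Hence $|L(T_1)|+|L(T_2)|\geq n$ and both counts are at least $\ell+1$.

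The main point of care is the pruning step: we must ensure it never deletes a leaf of $T$ and terminates on a genuine tree. That is handled by the observation above. The balanced-edge search is routine once $T$ is rooted at a leaf, because rooting there makes the tree binary below the root.
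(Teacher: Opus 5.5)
Your proof is correct, but it takes a different route from the paper's.

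The paper argues by contradiction using the Helly property for subtrees of a tree. If $T$ had no two vertex-disjoint subtrees each containing at least $\ell+1$ leaves of $T$, then a single vertex $z$ would meet every subtree containing that many leaves of $T$. So every component of $T-z$ carries at most $\ell$ leaves of $T$. Since $z$ has degree at most $3$ (and counts itself if it is a leaf), $T$ would have at most $\max(\ell+1,3\ell)=3\ell$ leaves, a contradiction. The two subtrees are then trimmed to $S$-good trees whose leaves all lie in $L(T)$.

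You instead construct the split explicitly. Rooting at a leaf, you descend to a vertex $y$ with $\ell+1\leq\lambda(y)\leq 2\ell$, cut the edge to its parent, and prune the two sides. The trade-offs are as follows:
\begin{itemize}
\item The Helly argument is shorter and needs no descent bookkeeping.
\item Your edge cut is constructive, and it yields $|L(T_1)|+|L(T_2)|\geq |L(T)|$ for free, because the two sides of $T-e$ partition $L(T)$ and your pruning never deletes a leaf of $T$.
\item In the paper's reduction, that sum condition tacitly requires an extra step: before trimming, the two disjoint subtrees must be enlarged to the two sides of an edge on the path joining them.
\item Your construction also gives explicit upper bounds on each side: at most $2\ell+1$ leaves on the side of $y$, and at most $|L(T)|-\ell-1$ on the other. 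This is convenient for the later application to stars, which needs pieces with at most $3\ell$ leaves.
\end{itemize}
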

\begin{proof}
    First observe that it suffices to show that there exist two vertex-disjoint subtrees $T'_1$ and $T'_2$ of $T$ such that both $T'_1$ and $T'_2$ have at least $\ell +1$ leaves in $L(T)$.  Indeed, $T_1, T_2$ can then be obtained by trimming respectively $T'_1, T'_2$ in such a way that all their leaves are in $L(T)$, and thus in $S$.

    Suppose for contradiction that there are no such two vertex-disjoint subtrees of $T$.  
    Then, by the Helly property of subtrees, 
    there is a vertex $z \in V(T)$ that hits all subtrees of $T$ that have at least $\ell +1$ leaves in $L(T)$. 
    In particular, every connected component of $T-z$ has at most $\ell$ leaves that are in $L(T)$. 
    If $z$ is a leaf of $T$, then there is only one connected component of $T-z$,
    and we deduce that $T$ has at most $\ell+1<3\ell+1$ leaves, a contradiction.
    If $z$ is not a leaf, then, since $z$ has degree at most three,
    we deduce that $T$ has at most $3\ell$ leaves, again a contradiction.
\end{proof}

We may now turn to our result for stars. 

\begin{theorem}
    Let $G$ be a graph, let $S\subset V(G)$, and let $k, \ell$ be positive integers.  Then, either $G$ contains $k$ vertex-disjoint $S$-rooted model of $K_{1,\ell}$, or there exists a set $X$ of at most $21\ell\cdot(k-1)$ vertices of $G$ such that $G-X$ has not $S$-rooted of $K_{1, \ell}$. 
\end{theorem}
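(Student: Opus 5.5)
We follow the same scheme as for paths: a maximum packing, a small set of special vertices, and then a Menger-type cut of the paths leaving the packing. The cases $\ell\le 2$ are essentially paths and are easy, so assume $\ell\geq 2$ and that $G$ has no $k$ vertex-disjoint $S$-rooted models of $K_{1,\ell}$.

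\emph{Packing.} An $S$-rooted model of $K_{1,\ell}$ exists in a connected subgraph $H$ exactly when $H$ contains an $S$-good tree with at least $\ell+1$ leaves. Indeed, given a model, take a minimal subtree connecting one $S$-vertex of each branch set, which is one centre vertex plus $\ell$ leaf vertices. Conversely, given an $S$-good tree with $\ell+1$ leaves in $S$, make one leaf together with a suitable part of the tree the centre branch set; a subcubic tree with $\ell+1$ leaves can be cut this way into a connected centre piece adjacent to $\ell$ disjoint pieces, each containing a leaf.

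Let $\mathcal{H}$ be a maximum family of vertex-disjoint $S$-good trees, each with at least $\ell+1$ leaves. Choose it so that, subject to $|\mathcal{H}|$ being maximum, the trees have at most $3\ell$ leaves each. This is possible: if a tree has at least $3\ell+1$ leaves, \Cref{lem:splitting_a_tree} splits it into two good trees with at least $\ell+1$ leaves each, contradicting maximality. Hence $|\mathcal{H}|\leq k-1$, and every $T\in\mathcal{H}$ has at most $3\ell$ leaves and hence at most $3\ell-2$ branch vertices (degree $3$).

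\emph{Local claim.} Let $Z_0$ be the set of leaves and branch vertices of all trees in $\mathcal{H}$, so $|Z_0|\leq (6\ell-2)(k-1)$. Removing $Z_0$ from $T$ leaves at most $2|L(T)|-3\leq 6\ell$ open paths, its \emph{segments}. Let $\mathcal{P}$ be a maximum family of disjoint $(V(\bigcup\mathcal{H})\setminus Z_0)$--$S$ paths in $G-Z_0$. The claim is that each segment of $T$ receives at most two endpoints of paths in $\mathcal{P}$. Suppose instead three paths land in one segment. Then $T$ together with these three paths is an $S$-good tree with at least $|L(T)|+3\geq \ell+4$ leaves, after trimming and handling paths that share interior structure as in \Cref{claim:excludingK13}. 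I want to split it into two disjoint good trees with $\geq\ell+1$ leaves each, contradicting the maximality of $|\mathcal{H}|$.

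This step is the main obstacle. \Cref{lem:splitting_a_tree} needs $3\ell+1$ leaves, so merely adding three leaves does not suffice in general. The plan is to split along the segment itself. The three attachment points on a segment split $T$ into two sides, and each side plus some of the new paths can be made to carry $\ell+1$ leaves. This works when both sides of the segment already have many leaves, but fails for unbalanced segments. To repair this, I would instead bound the number of attachments per segment by $O(1)$ only on balanced segments. On unbalanced segments, the count is absorbed by charging against the small side, or by replacing $T$ with the larger tree obtained by rerouting, which keeps $|\mathcal{H}|$ and increases the leaf count. An extremal choice (maximize total leaves subject to $\le 3\ell$ per tree) should rule out the unbalanced case.

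\emph{Conclusion.} Granting a bound of $c$ per segment, $|\mathcal{P}|\leq 6\ell c(k-1)$, and Menger gives $Z_1$ of that size. Set $X=Z_0\cup Z_1$. Any $S$-rooted $K_{1,\ell}$ model in $G-X$ meets some $T\in\mathcal{H}$ by maximality, and contains an $S$-vertex. This yields a path from $V(T)\setminus Z_0$ to $S$ avoiding $X$, which is a contradiction. Tuning the constants, with $c=2$ and careful leaf and segment counts, should give $|X|\le 21\ell(k-1)$.
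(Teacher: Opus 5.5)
There is a genuine gap, and it goes beyond the local claim you flag. First, the equivalence asserted in your packing step is false in the forward direction. The minimal tree joining one $S$-vertex of each branch set need not be subcubic, and the centre's $S$-vertex may be an internal vertex of it rather than a leaf. Two examples with $\ell\geq 3$:
\begin{enumerate}
\item $G=K_{1,\ell}$ with $S=V(G)$ is an $S$-rooted model of $K_{1,\ell}$, yet every subcubic subtree has at most $3<\ell+1$ leaves.
\item Take a subcubic tree with exactly $\ell$ leaves, and put the leaves and one internal vertex in $S$. It contains a model (the centre is all non-leaves) but no $S$-good tree with $\ell+1$ leaves.
\end{enumerate}
In both cases $\mathcal{H}=\emptyset$ and your $X$ is empty. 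Yet $G$ has a model and no two disjoint ones, so the theorem with $k=2$ demands a nonempty $X$. Thus your last step (``any model in $G-X$ meets some $T\in\mathcal{H}$ by maximality'') is unfounded. $S$-good trees with $\ell+1$ leaves certify models, but not every model is certified this way, and you need a separate mechanism for the others.

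Second, the local claim is false, and no constant $c$ can save the plan of separating $V(\bigcup\mathcal{H})\setminus Z_0$ from all of $S$. The family $\mathcal{P}$ contains a length-$0$ path at every $S$-vertex inside a segment, and such a path adds no leaf. Here is a counterexample. Take $\ell\geq 4$ and a subcubic tree $G$ with exactly $\ell+1$ leaves, containing a long path $Q$ whose internal vertices have degree $2$, with at least two leaves on each side of $Q$. Let $S$ consist of the leaves and the internal vertices of $Q$. Then:
\begin{enumerate}
\item Every component of $G-B$, for $B$ connected, contains a leaf of $G$. So two disjoint models would need at least $2\ell-2>\ell+1$ leaves, and $G$ has no two disjoint models.
\item The only $S$-good tree with at least $\ell+1$ leaves is $G$ itself: a leaf inside $Q$ confines the tree to one side plus that vertex, giving at most $\ell$ leaves. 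So $\mathcal{H}=\{G\}$ is forced whatever extremal choice you make.
\item $Z_1$ must contain every internal vertex of $Q$, which is unbounded.
\end{enumerate}
Even for genuine attachments, a pendant segment of a tree with $\ell+1$ leaves needs about $\ell$ of them before any split exists. So per-segment counting would give at best $O(\ell^2 k)$.

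The paper proceeds as follows:
\begin{enumerate}
\item It maximises the \emph{total} number of leaves over families of $S$-good trees with between $\ell+1$ and $3\ell$ leaves.
\item It removes leaves and branch vertices ($X'$); by leaf-maximality this already cuts $S\setminus V(\bigcup\mathcal{T})$ off from the packing.
\item It separates $S\cap V(T_j)$ from the other trees by a Menger cut of size at most $\ell_j$ ($X''$).
\item It shows that $G-(X'\cup X'')$ has no $S$-rooted model of a cubic tree with $3\ell+1$ leaves. Such a model would give an $S$-good tree that \Cref{lem:splitting_a_tree} splits, contradicting maximality.
\item Hence $\pw(G-(X'\cup X''),S)\leq 12\ell-2$ by \Cref{diestelSrootedoriginal}.
\item Finally, \Cref{lem:erdos_posa_pw} removes $(12\ell-1)(k-1)$ further vertices hitting all remaining models, certified or not.
\end{enumerate}
This bounded-pathwidth finishing step is the idea your proposal is missing.
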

\begin{proof}
    Assume that $G$ does not contain $k$ vertex-disjoint $S$-rooted models of $K_{1, \ell}$. 
    We show how to construct a set $X$ of at most $21\ell \cdot (k-1)$ vertices 
    such that $G-X$ has no $S$-rooted model of $K_{1, \ell}$.

    Let $\mathcal T=\{T_1, T_2, \ldots, T_{k'}\}$ be a (possibly empty) collection of $S$-good trees, such that, for $i\in [k']$, the number of leaves $\ell_i$ of $T_i$ is such that $\ell+1 \leq \ell_i \leq 3\ell$. 
    Moreover, we choose $\mathcal T$ such that $\sum_{i\in [k']} \ell_i$ is maximal. 

    For every $i \in [k']$, if $x_1, \dots,x_{\ell+1}$ are distinct leaves of $T_i$,
    then $(\{x_1\}, \dots, \{x_\ell\}, V(T_i) \setminus \{x_1, \dots, x_\ell\})$ is an $S$-rooted model of $K_{1,\ell}$ in $T_i$.
    Therefore, by assumption, we have $k'<k$.

    For each $i \in [k']$, let $X'_i$ be the set containing the vertices of degree $3$ of $T_i$ and the leaves of $T_i$.  
    Since $T_i$ has at most $3\ell$ leaves, we have $|X'_i|\leq 6\ell - 2$.  
    Let $X'= \bigcup_{i=1}^{k'} X'_i$.
    We have 
    \[
        |X'|\leq (6\ell-2)k' \leq (6\ell-2)(k-1).
    \]

    \begin{claim}\label{claim:Sl:X'}
        There is no path from $S \setminus V\left(\textstyle\bigcup \mathcal{T}\right)$ to $V(T_i)$ in $G-X'$, for every $i \in [k']$.
    \end{claim}
    \begin{proofclaim}
    Arguing by contradiction, suppose that there is a $\left(S\setminus (V(\bigcup \mathcal T)\right )$--$V(T_i)$ path $P$ in $G-X'$ for some $i \in [k']$.   
    Then, we can combine $P$ with the tree $T_i$ to obtain an $S$-good tree $T'_i$ with $\ell_i+1$ leaves that is vertex-disjoint from every tree in $\mathcal T\setminus \{T_i\}$.  
    If $\ell_i+1\leq 3\ell$, then $\mathcal{T}' = (\mathcal{T}\setminus \{T_i\}) \cup \{T'_i\}$ contradicts the maximality of $\sum_{i \in [k']} \ell_i$. 
    If $\ell_i+1>3\ell$, then $\ell_i+1=3\ell+1$.
    We apply \cref{lem:splitting_a_tree} to obtain two vertex-disjoint $S$-good subtrees $T''_i, T'''_i$ of $T'_i$ having between $\ell+1$ and $3\ell$ leaves, and whose total number of leaves is at least $|L(T'_i)|=\ell_i+1$.
    Then, $\mathcal{T}' = (\mathcal{T} \setminus \{T_i\}) \cup \{T''_i, T'''_i\}$ contradicts the maximality of $\sum_{i \in [k']} \ell_i$.
    \end{proofclaim}
    
    Let $j \in [k']$,  
    and let $\mathcal P$ be a maximum-size collection of vertex-disjoint $S\cap V(T_j))$--$(V(\bigcup \mathcal T)\setminus V(T_j))$ paths in $G-X'$.
    (Note that $\mathcal P$ could possibly be empty.) 
    It is clear, by construction, that $G[(V(\bigcup \mathcal T)\setminus V(T_j))\cup (\bigcup_{P\in \mathcal P}V(P))]$ contains a model of a forest consisting only of $S$-good trees, whose total number of leaves is $(\sum_{i\in [k']}\ell_i)-\ell_j + |\mathcal P|$.  By repeatedly applying \cref{lem:splitting_a_tree}, we obtain a forest consisting only of $S$-good trees with between $\ell+1$ and $3\ell$ leaves, without diminishing the number of leaves. 
    If $|\mathcal{P}|>\ell_j$, then $(\sum_{i \in [k']}\ell_i)-\ell_j + |\mathcal P| > \sum_{i\in [k']}\ell_i$, contradicting the maximality of $\sum_{i\in [k']}\ell_i$.  
    This proves that $|\mathcal{P}|\leq \ell_j$.
    By Menger's theorem, there exists a set $X''_j \subset V(G)\setminus X'$ of size at most $\ell_j$ separating $S\cap V(T_j)$ and $V(\bigcup \mathcal T)\setminus V(T_j)$ in $G-X'$.

    Let $X''= \bigcup_{i=1}^{k'} X''_i$.  
    Then 
    \[
    |X''|\leq \textstyle\sum_{i\in [k']} \ell_i \leq  3\ell (k-1), 
    \]
    and
    \begin{equation}\label{eq:Sl:X''}
        \begin{array}{c}
        \text{for all $i \in [k']$, there is no path}\\\text{from $\big (S\cap V(T_i) \big )$ to $\big(V(\bigcup \mathcal T) \setminus V(T_i)\big)$ in $G-(X' \cup X'')$.}
        \end{array}
    \end{equation}


    Now, for the remainder of the proof, we fix an arbitrarily chosen cubic tree $T$ that has exactly $3\ell+1$ leaves (and thus $6\ell$ vertices).

    \begin{claim}\label{claim:no_S_rooted_model_of_T}
       There is no $S$-rooted model of $T$ in $G-(X'\cup X'')$. 
    \end{claim}
    \begin{proofclaim}   
    Towards a contradiction, assume that there is an $S$-rooted model of $T$ in $G - (X' \cup X'')$, and let $G'$ be a connected component of $G$ containing such a model.
    Observe that $G'$ contains an $S$-good tree with at least $3\ell+1$ leaves, and thus by \cref{lem:splitting_a_tree}, it contains  two vertex-disjoint $S$-good trees $T', T''$ each having between $\ell+1$ and $3\ell$ leaves, and having $3\ell +1$ leaves in total. 
    Therefore, if $V(G')\cap V(\bigcup \mathcal T)=\emptyset$, we could extend $\mathcal T$, a contradiction.    
    Thus, $G'$ contains at least one vertex of $V(\bigcup \mathcal T)\setminus (X \cup X'')$. 
    Moreover, $G'$ also contains a vertex in $S$, which we denote by $s \in S \cap V(G')$.  
    If $s\not \in V(\bigcup \mathcal T)$, then \cref{claim:Sl:X'} implies that $G'$ intersects $X'$, a contradiction.
    Thus, $s\in V(T_i)$ for some $i$.
    By \eqref{eq:Sl:X''}, $V(G')$ is disjoint
    from $V(\bigcup \mathcal T)\setminus V(T_i)$.  
    Hence the set $(\mathcal T\setminus \{T_i\})\cup \{T', T''\}$ contradicts the maximality of $\sum_{i \in [k']} \ell_i$.  
    This proves that there is no $S$-rooted model of $T$ in $G - (X' \cup X'')$. 
    \end{proofclaim}   

    Combining \cref{claim:no_S_rooted_model_of_T} with \Cref{diestelSrootedoriginal}, we deduce that 
    \[
        \pw(G-(X'\cup X''), S)\leq 2|V(T)|-2=12\ell-2.
    \]
    Then, by \Cref{lem:erdos_posa_pw}, as there are no $k$ vertex-disjoint $S$-rooted models of $K_{1, \ell}$ in $G$ by assumption, there exists a set $X'''$ of size at most $(12\ell-1)(k-1)$ such that $(G-(X'\cup X''))-X'''$ does not contain any $S$-rooted minor of $K_{1, \ell}$.
    The result now follows by taking $X=X'\cup X''\cup X'''$, which has indeed size at most $(6\ell-2)(k-1)+(3\ell)(k-1)+(12\ell-1)(k-1)\leq 21\ell \cdot (k-1)$.
\end{proof}

\section{Open problems}
\label{sec:open_problems}

\Cref{thm:main} shows that there is
a linear Erd\H{o}s-P\'osa property for $S$-rooted models of a fixed tree $T$.
However, this linear factor depends heavily on $|V(T)|$.
It was conjectured by \citet{hodor2024quickly} that this dependency is actually linear in $|V(T)|$.
We conjecture the following explicit bound for this Erd\H{o}s-P\'osa function,
in the more general setting of $(F,R)$ models, where $R \subseteq V(F)$.

\begin{conjecture}
    Let $F$ be a forest and let $R \subseteq V(F)$.
    For every graph $G$ and every $S \subseteq V(G)$, for every positive integer $k$,
    at least one of the following holds:
    \begin{enumerate}[label={\normalfont(\arabic*)}]
        \item there are $k$ pairwise vertex-disjoint models of $(F,R)$ in $(G,S)$, or
        \item there is a set $X \subseteq V(G)$ with $|X| \leq |V(F)|(k-1)$
            such that there is no model of $(F,R)$ in $(G-X,S \setminus X)$.
    \end{enumerate}
\end{conjecture}

This bound would be tight for every $(F,R)$ and for every $k$:
the graph $G = K_{|V(F)|\cdot k-1}$ with $S = V(G)$ is such that there are no
$k$ pairwise vertex-disjoint models of $(F,R)$ in $(G,S)$, 
while no set of less than $|V(F)|(k-1)$ vertices intersects every model of $(F,R)$ in $(G,S)$.
Note also that the case where $F$ is a tree and $R = \emptyset$ 
was proved by \citet{DJMM25}.

\bibliographystyle{plainnat}
\bibliography{bibliography}

\end{document}